\documentclass[12pt]{article}
\usepackage{amsmath,amsfonts,amsthm}
\usepackage{bbm}    \usepackage[utf8]{inputenc}
\usepackage{cleveref}
\usepackage{tikz}
\usepackage{xr}
\newcommand\Fdist{P}
\newcommand\Fdistmax{Q}
\newcommand\Enabla{D}
\newcommand{\Herror}{H}
\newcommand{\Hprimeerror}{H'}
\newcommand{\supp}{\operatorname{supp}}
\newcommand{\Rndomain}{U}
\newcommand\weakto{\rightharpoonup}
\newcommand\Lin{\mathrm{Lin}}
\newcommand\Lip{\mathrm{Lip}}

\newcommand\vol{\mathrm{vol}}
\newcommand\sym{\mathrm{sym}}
\newcommand\Id{\mathrm{Id}}
\newcommand\id{\mathrm{id}}
\newcommand\R{\mathbb{R}}
\newcommand\N{\mathbb{N}}
\newcommand\eps{\varepsilon}
\newcommand{\dist}{\mathrm{dist}}

\numberwithin{equation}{section}
\newtheorem{theorem}{Theorem}[section]
\newtheorem{lemma}[theorem]{Lemma}
\newtheorem{definition}[theorem]{Definition}
\newtheorem{proposition}[theorem]{Proposition}
\newtheorem{corollary}[theorem]{Corollary}
\newtheorem{remark}[theorem]{Remark}

\newcommand\cof{\mathop{\mathrm{cof}}}
\newcommand\Cof{\mathop{\mathrm{Cof}}}
\newcommand\Det{\mathop{\mathrm{Det}}}

\DeclareMathOperator{\Div}{div}
\newcommand\Tr{\mathop{\mathrm{Tr}}}

\newcommand{\FPhi}{\hat\Phi}

\newcommand{\C}{\mathbb C}

\usepackage{fancyhdr}
\fancypagestyle{first}{%
\fancyhf{} \fancyfoot[C]{\thepage}
}
\newcommand\loc{\mathrm{loc}}
\begin{document}
\thispagestyle{first}
\begin{center}
{ \Large
Uniform $W^{2,m}$ approximation of almost isometric maps\\[5mm]}
{\today}\\[5mm]
Sergio Conti$^{1}$, Georg Dolzmann$^{2}$, Stefan M\"uller$^{1,3}$ \\[2mm]
{\em $^1$ Institut f\"ur Angewandte Mathematik,
 Universit\"at Bonn,\\ 53115 Bonn, Germany }\\
 {\em $^{2}$ Fakult\"at f\"ur Mathematik, Universit\"at Regensburg,\\
  93040
   Regensburg, Germany}\\
{\em $^3$ Hausdorff Center for Mathematics,
 Universit\"at Bonn,\\ 53115 Bonn, Germany }\\
\bigskip

\begin{minipage}[c]{0.85\textwidth}\small
This paper shows that an almost-isometric Sobolev map from a Lipschitz subset of an oriented manifold $M$ into another oriented manifold $N$
can be approximated by a map of class $C^{1,\alpha}$, with a universal bound on the $W^{2,m}$ norm, for any $m\in(1,\infty)$.
This, in turn,  implies a universal bound on the $C^{1,\alpha}$ seminorm.
The $W^{1,p}$ distance of the approximation from the original map is controlled in terms of the $L^p$-deviation from being an isometry, with the optimal scaling. The manifolds $M$ and $N$ are assumed to be compact, oriented and have equal dimension; $M$ may have a boundary.
\end{minipage}
\end{center}

Keywords: Sobolev functions on manifolds, almost isometric maps, Hölder approximation

MSC codes: 58J05, 53Z30, 74B20

\tableofcontents

\section{Introduction}

Over the last decades,
the theory of nonlinear elasticity has been profoundly influenced by the nonlinear geometric rigidity estimate that states
that for $f \in W^{1,2}(U; \R^n)$, with $U\subset\R^n$
open, bounded, connected,  Lipschitz, one has
\begin{equation} \label{eq:euclidean_rigidity}
\min_{R\in SO(n)} \int_U |Df - R|^2 \, dx \le  C \int_U  \dist^2(Df, SO(n)) \, dx,
\end{equation}
with $C$ depending only on $U$ and $n$ \cite{FrieseckeJamesMueller2002-CPAM}.
Applications range from the derivation of linearized models from nonlinear elasticity \cite{DalmasoNegriPercivale2002,FrierichSchmidt2015,FriedrichKreutzZemas2025} to the derivation of plate theories   \cite{FrieseckeJamesMueller2002-CPAM} and, more in general, the study of low-dimensional reductions of nonlinear elasticity \cite{morafriesecke2003derivation,mora2004nonlinear,FrieseckeJamesMueller2006,LewickaPakzad2011,Mueller2017Review,Lewicka2023Buch}.

It is a natural question whether rigidity depends on the Euclidean structure of $\R^n$. 
Beyond its intrinsic interest from the viewpoint of differential geometry, it is important
for many applications in the derivation of shell theories, see for example the recent work \cite{KupfermanMaorSphere} and references therein.
The left-hand side of \eqref{eq:euclidean_rigidity} is then naturally phrased in terms of the $W^{1,p}$ distance between $f$ and an isometric immersion; the right-hand side is obtained by integrating the pointwise distance between $df$ and the set of isometries between the corresponding tangent spaces.

In the case of a compact oriented manifold $M$ without boundary, and maps from $M$ into itself, the question was answered in \cite{ContiDolzmannMueller2024}, where a bound on the $W^{1,p}$ distance from the set of orientation-preserving isometries on $M$ analogous to \eqref{eq:euclidean_rigidity}
was obtained. One key ingredient was  to replace the map $f$ by a regular approximation $\hat f$, which was obtained using the extrinsic heat flow on $M$.
For related results we refer to \cite{KupfermanMaorSphere}, \cite[Theorems~1.2 and~1.3]{LuckhausZemas2022rigidity}, \cite[Theorem~3.2]{Chen-Li-Slemrod22}.

In the derivation of reduced theories, as in the literature mentioned above, one is naturally led to consider maps defined on general Lipschitz domains. In the Euclidean case, this is already included in \eqref{eq:euclidean_rigidity}. For general bounded, connected Lipschitz subsets of a manifold, however,  the natural generalization of \eqref{eq:euclidean_rigidity} does not hold.

For example, 
let $M=[0,1)^2$ be the two-dimensional torus, 
$U=B_r\subset M$ be a disk of radius $r<1/2$, and let $N$ be 
a rotation surface which contains a flat disk $D_0$ of radius $r_0$ and has positive curvature elsewhere. Any isometry from $U$ into $N$ must map $U$ inside $D_0$.
For $r=r_0$ there is a one-dimensional family of such isometries, which differ by the action of $SO(2)$. If we try to translate one of these isometries by $\delta$, we obtain a map which is isometric away from a set of measure proportional to $\delta$. Since $N$ is smooth, the pointwise distance to isometries grows faster than any power of $\delta$. However, the $W^{1,2}$ distance to the closest isometry is of order $\delta$. Eliminating the cylindrical symmetry, one can obtain similar constructions with a single isometry from $U$ into $N$.
The difficulty lies in the fact that the size of the set of isometries and, correspondingly, the dimension of the space of Killing vector fields, may be different for different translations of $U$.
Therefore, we believe that isometries are not the natural object to study in this setting.

In this paper, we propose uniform approximability with more regular maps as a stable and robust alternative to rigidity in a geometric context.
 For example, we show in Section~\ref{sec:eucllinearization} below that uniform approximability suffices to obtain a linearized model as $\Gamma$-limit of nonlinear elasticity.
In the Euclidean case, the full rigidity \eqref{eq:euclidean_rigidity}  follows immediately from uniform approximability and the corresponding linear estimate, i.e., Korn's inequality (see Section~\ref{sectEucliRigidity} below).
In the Riemannian setting, it is natural to expect that one  can similarly obtain a quantitative rigidity estimate from uniform approximability, at least if the domain and the manifolds obey additional assumptions, this will be explored elsewhere.

Our main result is the following.

\begin{theorem}[Uniform $W^{2,m}$ approximation]  \label{th:W2m_approximation} Suppose that $(M,g)$ and $(N, g_N)$ are smooth, compact, oriented,  $n$-dimensional Riemannian manifolds, $N$ without boundary, and let $\imath: N \to \R^d$ be an isometric embedding.
Let $U \subset M \setminus \partial M$
be an open, bounded, and connected  Lipschitz set, let $p,m  \in (1, \infty)$.
Then there exist  constants $C_1$ and  $C_2$ with the following property. For every map $f \in W^{1,p}(U;N)$
there exists a map $\hat f\in (C^1\cap W^{1,p}\cap W^{2,m})(U; N)$ such that
\begin{equation}\label{eq:mainth:W1p_estimate} 
\| \imath \circ \hat f - \imath \circ f\|_{W^{1,p}(U)} \le C_1 \| \dist(df, SO(M,N)) \|_{L^p(U)}  
\end{equation}
and 
\begin{equation}\label{eq:mainth:W2m_extrinsic}
\| d( \imath \circ \hat f)\|_{L^\infty(U)} + \| \nabla d( \imath \circ \hat f)\|_{L^m(U)}  \le  C_2.  
\end{equation}
\end{theorem}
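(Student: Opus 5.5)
We would prove Theorem~\ref{th:W2m_approximation} by localizing to a fixed scale and combining the Euclidean rigidity estimate \eqref{eq:euclidean_rigidity} in charts with a regularization that is harmonic in suitable coordinates. Throughout we write $\delta := \|\dist(df,SO(M,N))\|_{L^p(U)}$ and $\delta_r(x)$ for the local deviation on $B_r(x)$.

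\textbf{Step 1 (reduction to small $\delta$).} If $\delta \ge \delta_0$ for a constant $\delta_0>0$ depending only on the data, we take $\hat f$ to be a constant map, or an isometric embedding of a small ball composed with a fixed smooth map. Then \eqref{eq:mainth:W2m_extrinsic} holds trivially. Since $N$ is compact and $|df|\lesssim 1+\dist(df,SO(M,N))$, we have $\|\imath\circ f\|_{W^{1,p}}\lesssim 1+\delta\lesssim\delta/\delta_0$, which gives \eqref{eq:mainth:W1p_estimate}. It therefore suffices to treat $\delta\le\delta_0$.

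\textbf{Step 2 (local rigidity at a fixed scale).} Cover $U$ by finitely many Whitney-type balls or, near $\partial U$, Lipschitz-graph cells of a fixed size $r_0$, chosen so that $M$ and $N$ are nearly Euclidean on scale $r_0$ in normal coordinates. On each cell $B$, smallness of $\delta$ combined with the Sobolev/Morrey-type oscillation argument is meant to show that $f(B)$ lies in a single chart of $N$. Strictly, with $p\le n$ this needs the local rigidity itself: first apply \eqref{eq:euclidean_rigidity} with an $L^\infty$-truncation of $f$. In coordinates the metrics deviate from the Euclidean one by $O(r_0^2)$, and \eqref{eq:euclidean_rigidity} in its $L^p$ version gives a rotation $R_B$ with
\[
\|Df-R_B\|_{L^p(B)}\lesssim \delta_B + r_0^{2}|B|^{1/p}.
\]
The curvature error $r_0^2$ is not small compared with $\delta$, so the target is not isometric closeness but only closeness to a map with a uniform gradient bound.

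\textbf{Step 3 (regularization and gluing).} In each chart, replace $f$ by $v_B$, the solution of a uniformly elliptic linear system with the trace of $f$ on the boundary; for instance, harmonic with respect to the pulled-back metric, or use the extrinsic heat flow of \cite{ContiDolzmannMueller2024} run for a fixed time. Write $f=v_B+w$. Since $\dist(Df,SO(n))$ controls the deviation of $\operatorname{cof}Df$ from $Df$, and $\operatorname{div}\operatorname{cof}Df=0$ (Piola), the function $w$ solves a linear system whose right-hand side is controlled by $\dist(Df,SO(n))$ plus smooth metric terms. Calderón--Zygmund estimates then give
\[
\|Dw\|_{L^p}\lesssim\delta_B+O(r_0^2),
\]
while interior $W^{2,m}$ bounds for $v_B$ are uniform because $Df$ is close in $L^p$ to a constant rotation. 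The local maps are glued with a fixed partition of unity in $\R^d$ and then projected onto $\imath(N)$ with the nearest-point projection. This is possible because the local approximations differ pairwise by $O(\delta)$ in $W^{1,p}$ and are uniformly close in $L^\infty$. Uniform $C^1$ bounds follow from $W^{2,m}$ bounds with $m>n$, and for $m\le n$ we first prove the result for large $m$ and use $U$ bounded.

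\textbf{Main obstacle.} The hard part is the boundary of the Lipschitz set $U$, where no interior regularity is available. Here we would extend $f$ across $\partial U$ with a Lipschitz reflection $\Phi$, so that $\dist(d(f\circ\Phi),SO)$ is controlled by $\dist(df,SO)$ plus $\operatorname{Lip}$-distortion terms. Those distortion terms are not small, so a plain reflection does not work. We would first try instead a Whitney decomposition with cube size proportional to the distance to $\partial U$, where $v_B$ is chosen on each cube. This keeps the $W^{1,p}$ error linear in $\delta$ but needs a uniform $W^{2,m}$ bound up to the boundary, which requires proving that the local rotations $R_B$ vary with bounded discrete second derivatives across adjacent cubes. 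Securing that uniformity, rather than a bound that blows up at $\partial U$, is where we expect the argument to be most delicate.
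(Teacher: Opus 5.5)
Your reduction to small $\delta$, the gluing in $\R^d$ and the nearest-point projection are fine and match the paper. However, the proposal has two genuine gaps, and they sit exactly where the paper's main ideas are.

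\textbf{The regularization must be nonlinear.} In Step 3 you replace $f$ by the solution of a \emph{linear} elliptic problem and accept $\|Dw\|_{L^p}\le C\delta_B+O(r_0^2)$. But \eqref{eq:mainth:W1p_estimate} is linear in $\delta$ with no additive term, so an error that does not vanish with $\delta$ is fatal.

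Take an exact isometry $f$, so $\delta=0$, whose coordinate representation is not $L_g$-harmonic. This is the generic case, since $-L_gF=Q(\cdot,F)(DF,DF)$. Then \eqref{eq:mainth:W1p_estimate} forces $\imath\circ\hat f=\imath\circ f$, whereas your $v_B\neq F$. The same objection applies to running the extrinsic heat flow for a fixed time, which moreover needs boundary conditions on $\partial U$. Letting $r_0\to0$ together with $\delta$ only moves the problem into the gluing, where it becomes the second-difference issue discussed below.

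What is needed is that the correction solve the full harmonic map equation, so that the only forcing is the Piola defect. The paper sets $\widetilde F=F+Z$ with $-L_gZ=\Xi(\cdot,Z,DZ)-\Div H-H'$ and $|H|+|H'|\le C\dist(df,SO(M,N))$. It solves this by a contraction in $W^{1,m}_0(B_r)$ on a small \emph{ball}, extending $H,H'$ by zero and setting $Q=0$ outside $U$ (Proposition~\ref{prop:harmonic_replacement_interior}). This gives $\|DZ\|_{L^\ell}\le C\|H\|_{L^\ell}+Cr\|H'\|_{L^\ell}$ for $\ell\in[p,m]$; the quadratic term is handled using $\frac1p+\frac1m<1$. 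It also avoids Dirichlet problems on Lipschitz cells, where $W^{1,p}$ estimates are in general only available for $p$ in a limited range around $2$.

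Relatedly, localizing $f(B)$ in one chart when $p\le n$ is not achieved by an ``$L^\infty$-truncation''. It is done by Lipschitz truncation first (Proposition~\ref{proptruncation}). The resulting Lipschitz bound is also what later makes the rigidity defect bounded, and hence puts the weights below in $L^m$.

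\textbf{The boundary needs a splitting, not a uniform bound.} At $\partial U$ you ask for a $W^{2,m}$ bound up to the boundary via bounded discrete second differences of the $R_B$. This cannot hold in general. Suppose the deviation has size $a$ on a ball of radius $\ell$ touching $\partial U$, so $\delta\sim a\ell^{n/p}$. Replacements at the Whitney scale follow this deformation, so their Hessians are of order $a/\ell$ on a set of measure of order $\ell^n$. That gives $\|D^2\|_{L^m}\sim a\ell^{-1+n/m}$, which is unbounded as $\ell\to0$ (recall $m>n$) even though $\delta\to0$. Such fine structure must be discarded rather than approximated; discarding it costs only $O(\delta)$ in $W^{1,p}$.

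What the Whitney-type argument actually gives for the exact harmonic map $\widetilde F$ is a weighted bound. It combines interior estimates on balls of radius comparable to $\dist(y,\partial U)$, control of the oscillation of $D\widetilde F$ by rigidity with frozen metric (Lemma~\ref{lemmaFJMwithprefactor}), and the $s$-maximal function of the defect. The result is $|D^2\widetilde F|\le \mathcal A/\dist(\cdot,\partial U)+\mathcal B$ with $\|\mathcal A\|_{L^p}\le C\delta$ and $\|\mathcal A\|_{L^m}+\|\mathcal B\|_{L^m}\le C$.

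The missing step is to split $\widetilde F=\widetilde F_h+\widetilde F_l$ with $\|\widetilde F_h\|_{W^{1,p}}\le C\delta$ and $\|\widetilde F_l\|_{W^{2,m}}\le C$, and keep only $\widetilde F_l$. The paper first turns the pointwise bound into an identity by splitting $D^2\widetilde F$ according to which term dominates. It then writes $\mathcal A/(\gamma(x')-x_n)=\partial_n\mathcal A^*$ with $\|\mathcal A^*\|_{L^q}\le C\|\mathcal A\|_{L^q}$, using a one-dimensional Hardy inequality (Lemma~\ref{lemmaxntodiv}), so that $D^2\widetilde F=\Div\mathcal A^*+\mathcal B^*$.

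Finally it uses Smith's representation $u=K*D^2u$ (Proposition~\ref{propsmith}). The kernel is homogeneous of degree $2-n$ and supported in a cone $\Lambda$ with $x-\Lambda$ inside $U$ near the boundary. After a cutoff, $\widetilde F_h=DK*\mathcal A^*$ and $\widetilde F_l=K*\mathcal B^*$ involve only values in $U$ and are controlled by Calder\'on--Zygmund estimates (Proposition~\ref{propLipschitzboundarylocal}).
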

The notation is explained in Section~\ref{sec:prelim} below; in particular 
$\nabla d( \imath \circ \hat f)$ denotes the covariant derivative of the $\R^d$-valued one-form 
$d( \imath \circ \hat f)$. An intrinsic formulation, which does not make use of the embedding $\imath$, can be obtained 
by using the Sasaki distance  on the tangent bundle
$TN$ \cite{ConventVanSchaftingen2016intrinsic},
but we prefer to use  the extrinsic norm for simplicity of notation.
The argument depends crucially on compactness of $\overline U$ and $N$, but not on compactness of $M$.

In the Euclidean case a similar approximation result with affine maps follows immediately from~\eqref{eq:euclidean_rigidity}. In the Riemannian setting there is no natural analogue of affine maps, solutions of $\nabla d f=0$ are too special if one works with general Lipschitz sets, as the class is unstable with respect to variations of the domain, as discussed above. Therefore we aim for approximation with uniformly $W^{2,m}$ maps.
The approximation in Theorem~\ref{th:W2m_approximation} is obtained via a local construction and in particular holds without any assumption on the family of isometric embeddings of $U$ into $N$.

We briefly sketch the structure of the proof.
We first reduce to the case of uniformly Lipschitz maps (\Cref{pr:lipschitz_approximation}).
The key argument is local, and contained in \Cref{prop:W2m_local}; \Cref{th:W2m_approximation} follows from it by an appropriate covering. The proof in \Cref{prop:W2m_local} is obtained working in local coordinates in both manifolds. We use the Piola identity 
(see
\cite{KupfermanMaorShachar2019}, \cite[Sect.~4]{ContiDolzmannMueller2024}, 
\Cref{prop:weakpiola} below) to show that $f$ is an approximate solution of the harmonic map equation. A first regularization $f_*$ is constructed using a fixed-point argument
in \Cref{prop:harmonic_replacement_interior}, it turns out to be an exact solution of the harmonic map equation in $U$. Using uniform elliptic regularity estimates, one obtains bounds of the form
\begin{equation}\label{eqd2fintro}
 |D^2f_*|\le \frac{A}{\dist(\cdot,\partial U)} + B
\end{equation}
with $A$, $B$ bounded in $L^m$, and $A$ small in $L^p$. Using singular integrals,
in Proposition~\ref{propLipschitzboundaryfull} one decomposes $f_*$ into two parts, one of which has gradient which is small in $L^p$, the other one is bounded in $W^{2,m}$. The second one has the desired properties.

We discuss general background material and notation in Section~\ref{sec:prelim}.
The local argument and how it implies the main proof are discussed in Section~\ref{secmainproof}. In Section~\ref{sec:elliptic}
we review elliptic estimates with attention to uniformity of the estimates, and in 
Section~\ref{sec:solharmmap} we present  the fixed-point argument for solving the harmonic map equation, and the natural bounds which then lead to~\eqref{eqd2fintro}.
The singular-integral estimates are presented in Section~\ref{se:smith}.
Finally, in Section~\ref{sec:implEucl} we discuss for illustration the above-mentioned consequences of our result in the Euclidean case.

\section{Preliminaries}\label{sec:prelim}

We use standard notation for manifolds; see, for example,  \cite{Cheeger-Ebin,Kobayashi-Nomizu63,Kobayashi-Nomizu69,ONeill83,LeeIntroSmoothManifolds2013}.
 The distance in \eqref{eq:mainth:W1p_estimate} 
is defined as 
\begin{equation}\label{eq:defSOMN}
  \dist(df, SO(M,N))(q) := \dist(df(q), SO(T_q M, T_{f(q)} N)),
\end{equation}
where the tangent spaces $T_q M$ and $T_{f(q)}N$ are viewed as oriented
Euclidean spaces equipped with the Riemannian inner product, $SO(V,W)$ denotes the
set of linear and orientation-preserving {isometries} between oriented Euclidean spaces $V$ and $W$,
and the distance is taken with respect to the Hilbert-Schmidt norm on $W \otimes V^* \simeq \Lin(V,W)$.

The Sobolev space $W^{k,s}(U)$, $k\in\N$, can be defined using local charts,
and $W^{k,s}(U;\R^d)$ working componentwise. In turn,
$W^{k,s}(U;N)$ consists of the maps $f:U\to N$ such that $\imath\circ f\in W^{k,s}(U;\R^d)$.
For a vector field $Y:U\to M$  we assume implicitly that $Y(q)\in T_qM$, and similarly for
 related objects.

As in \cite{ContiDolzmannMueller2024},
for $q\in N$ we define
the second fundamental form $A(q) : T_{\imath(q)}\imath(N) \times T_{\imath(q)}\imath(N) \to T^\perp_{\imath(q)}\imath(N)$
 by setting, for $Y\in C^\infty(\imath(N);T\imath(N))$
\begin{equation}
\label{eq:definition_second_fundamental}
 A(q)(v, w) := - ((d_v Y)(\imath(q)))^\perp,  \quad \text{if $Y(\imath(q)) = w$.}
\end{equation}
 We extend $A$ to a  symmetric bilinear form on linear maps from $T_p M$ to $T_{\imath(q)}\imath(N)$ as follows.
 For $p\in M$, $q\in N$, we define
  \begin{equation}
\mathbb A(p,\imath(q)):  (T_{\imath(q)} \imath(N)  \otimes T^*_p M) \times  (T_{\imath(q)} \imath(N)  \otimes T^*_p M)
   \to T^\perp_{\imath(q)} \imath(N)\subset\R^d   
  \end{equation}
by
   \begin{equation} \label{eq:definition_second_endos}
 \mathbb A(p,\imath(q))(X, Y) := \Tr A(X,Y) := g^{\alpha \beta}(p) A(q)(X e_\alpha, Y e_\beta)
\end{equation}
where $e_1, \ldots, e_n$ is a basis of $T_p M$ and $g^{\alpha \beta}$ is the inverse of
$g_{\alpha \beta} := g(e_\alpha, e_\beta)$. We remark that in \cite{ContiDolzmannMueller2024} the dependence of $\mathbb A$ on $p$ was not made explicit.
We denote by $e$ the Euclidean metric of $\R^d$ and the corresponding inner product. Throughout the paper we use the summation convention.
We denote by $B^{(M)}_r(x):=\{y\in M: \dist_M(x,y)<r\}$, or simply by $B_r(x)$ when the ambient manifold is clear from the context, and open neighbourhoods
of sets by $(A)_\delta:=\{x: \dist(x,A)<\delta\}$.
We denote by
$\R^{n\times n}_{+}$ the set of positive definite symmetric matrices.
We use $D$ for the differential of maps from $\R^n$ to $\R^m$.

For $p\in M$, $q\in N$,  $X\in T_{\imath(q)}\imath(N)\otimes T^*_pM$ we define $\Det X \in \R$ by
 \begin{equation}
 X^* \vol_{\imath(N)} = \Det X \,  \, \vol_M.
 \end{equation}
Then $t \mapsto \Det (X + tY)$ is a polynomial and we define the cofactor operator $\Cof X \in T_{\imath(q)}\imath(N)\otimes T^*_pM$ by
\begin{equation}  \label{eq:define_Cof}
\langle \Cof X, Y \rangle_{g,e} := \frac{d}{dt}|_{t=0} \Det(X + tY).
\end{equation}

The Piola identity $\Div\cof Du=0$ is a key ingredient for the Euclidean proof \cite{FrieseckeJamesMueller2002-CPAM}, coupled with the fact that on bounded sets 
$|A-\cof A|\le C\dist(A, SO(n))$ for $A$ bounded.
After introducing the appropriate geometric objects,
these two facts continue to hold on  manifolds. One obtains the following (see
\cite{KupfermanMaorShachar2019}, \cite[Sect.~4]{ContiDolzmannMueller2024}).

\begin{proposition}\label{prop:weakpiola}
Let $M$, $N$, $U$, $\imath$ be as in Theorem~\ref{th:W2m_approximation}.
Let $f\in \Lip(U;N)$, set $\hat f:=\imath\circ f\in \Lip(U;\R^d)$. Then there are $\hat h\in L^\infty(U; T\imath(N)\otimes T^*M)$ and $\hat h'\in L^\infty(U; T^\perp\imath(N))$ such that
 for any $\xi\in W^{1,\infty}_0(U; \R^d)$
one has  \begin{equation}\label{eqpiolacofprelim}
  \int_U \langle d\hat f, d\xi\rangle_{g,e}
  -\langle(\mathbb A\circ (\id, \hat f))(d\hat f,d\hat f), \xi\rangle_e d\vol_M
  =\int_U -\langle \hat h, d\xi\rangle_{g,e}
  +\langle \hat h',\xi\rangle_e d\vol_M.
 \end{equation}
 More precisely, they
can be chosen as
\begin{equation}\label{eqboundstheo21}
 \begin{split}
  \hat h:=& \Cof d\hat f-d\hat f, \\
  \hat h':=& -(\mathbb A\circ (\id, \hat f)) (d\hat f-\Cof d\hat f,d\hat f).
 \end{split}
\end{equation}
\end{proposition}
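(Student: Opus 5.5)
Writing $P(y)$ for the orthogonal projection of $\R^d$ onto $T_y\imath(N)$, I would first reduce the claim to a weak Piola identity. Inserting the choice \eqref{eqboundstheo21} into \eqref{eqpiolacofprelim} and cancelling, the $d\hat f$ terms on both sides combine, and the statement becomes equivalent to
\begin{equation*}
 \int_U \langle \Cof d\hat f, d\xi\rangle_{g,e}\, d\vol_M = \int_U \langle (\mathbb A\circ(\id,\hat f))(\Cof d\hat f, d\hat f),\xi\rangle_e \, d\vol_M
 \quad\text{for all } \xi\in W^{1,\infty}_0(U;\R^d).
\end{equation*}
Here I use bilinearity and symmetry of $\mathbb A$: $\mathbb A(d\hat f,d\hat f)$ and $\mathbb A(d\hat f-\Cof d\hat f,d\hat f)$ combine to $\mathbb A(\Cof d\hat f,d\hat f)$. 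Boundedness of $\hat h,\hat h'$ is immediate, since $f$ is Lipschitz, $N$ is compact and $\Cof$ is a polynomial. I then split $\xi=\xi^T+\xi^\perp$ with $\xi^T:=P(\hat f)\xi$. Both pieces are Lipschitz with compact support, because $P$ is smooth.

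\emph{Normal part.} Since $\Cof d\hat f$ takes values in $T\imath(N)$, we have $\langle \Cof d\hat f, d\xi^\perp\rangle = g^{\alpha\beta}\langle \Cof d\hat f\, e_\alpha, P\, d\xi^\perp(e_\beta)\rangle$. The tangential part of the derivative of a normal field is governed by the second fundamental form. Indeed, differentiating $\langle Y,\nu\rangle=0$ for $Y$ tangent and $\nu$ normal and using \eqref{eq:definition_second_fundamental} gives $\langle w, d_v\nu\rangle = \langle A(v,w),\nu\rangle$. This holds a.e. along the Lipschitz map $\hat f$ by the chain rule. Taking $v=d\hat f e_\beta$ and $w=\Cof d\hat f e_\alpha$ yields
\begin{equation*}
 \langle \Cof d\hat f, d\xi^\perp\rangle_{g,e} = \langle \mathbb A(\Cof d\hat f,d\hat f),\xi^\perp\rangle_e = \langle \mathbb A(\Cof d\hat f,d\hat f),\xi\rangle_e ,
\end{equation*}
where the last equality holds because $\mathbb A$ is normal-valued. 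So the normal part produces exactly the right-hand side, pointwise.

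\emph{Tangential part.} It remains to show $\int_U\langle\Cof d\hat f, d\xi^T\rangle\,d\vol_M=0$, which is the genuine Piola (null-Lagrangian) identity. Let $\pi$ be the nearest-point projection onto $\imath(N)$ and set $f_t:=\pi(\hat f+t\xi^T)$. This is a Lipschitz variation inside $N$ with $f_t=\hat f$ outside $\supp\xi$, and $\partial_t f_t|_{t=0}=\xi^T$. Since $d\pi=P$ on $\imath(N)$, the definition \eqref{eq:define_Cof} of $\Cof$ gives
\begin{equation*}
 \frac{d}{dt}\Big|_{t=0}\int_U f_t^*\vol_{\imath(N)} = \int_U \langle \Cof d\hat f, P\,d\xi^T\rangle\, d\vol_M = \int_U\langle \Cof d\hat f, d\xi^T\rangle\, d\vol_M .
\end{equation*}
On the other hand, $\vol_{\imath(N)}$ is closed and the homotopy $F(t,x)=f_t(x)$ is stationary near $\partial U$. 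Homotopy invariance then gives $f_t^*\vol-f_0^*\vol=d\beta_t$ with $\beta_t$ compactly supported in $U$. Hence $\int_U f_t^*\vol$ is independent of $t$, and the derivative vanishes.

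\emph{Main obstacle.} The delicate step is justifying the homotopy/Stokes argument for merely Lipschitz maps. I would handle it by mollifying: replace $\hat f$ by $\pi(\hat f*\rho_\eps)$, which is smooth, takes values in $N$ for small $\eps$, and converges in $W^{1,q}$ for all $q<\infty$ with uniform Lipschitz bounds. Alternatively, I would prove the identity for smooth maps and pass to the limit, since both sides of the reduced identity are continuous under such convergence ($\Cof$ and $\mathbb A$ are polynomial/smooth, with dominated convergence). Alternatively one could work in local coordinates of $N$ and invoke the Euclidean identity $\Div\cof Du=0$, combined with a partition of unity in the target. I expect the approximation route to be cleanest.
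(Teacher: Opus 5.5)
Your argument is correct in substance, and it takes a genuinely different, self-contained route. The paper does not prove this statement itself. It cites the extrinsic Piola identity of Kupferman--Maor--Shachar, as discussed in Conti--Dolzmann--M\"uller. The only derivation it carries out is Proposition~\ref{proppiolacoordinates}, which works in charts on both manifolds: it combines the flat identity $\Div\cof DF=0$ with pointwise algebra for $\cof\bigl(g_N^{1/2}DF\,g^{-1/2}\bigr)$ and the product rule.

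Your reduction to $\int_U\langle\Cof d\hat f,d\xi\rangle_{g,e}\,d\vol_M=\int_U\langle\mathbb A(\Cof d\hat f,d\hat f),\xi\rangle_e\,d\vol_M$ is exact. The tangential/normal split makes the geometry visible. The curvature term comes only from the normal part of the test field, via the Weingarten relation, pointwise. The tangential part is the first variation of the null Lagrangian $\int_U f^*\vol_N$.

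What your route buys is a coordinate-free proof of exactly the stated extrinsic identity, with the stated $\hat h,\hat h'$. The chart route needs only the classical Euclidean Piola identity for Lipschitz maps, with no second fundamental form and no Stokes argument for Lipschitz homotopies. Its costs are the metric bookkeeping and the fact that it yields only the local equation \eqref{eqLgFQhhpprop22}.

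One step is asserted too quickly: the formula $\frac{d}{dt}\big|_{t=0}\int_U f_t^*\vol_{\imath(N)}=\int_U\langle\Cof d\hat f,P\,d\xi^T\rangle\,d\vol_M$. It does not follow from \eqref{eq:define_Cof} and $d\pi=P$ alone, for two reasons. First, \eqref{eq:define_Cof} differentiates at a fixed base point, whereas $\Det(df_t)$ lives on $T_{f_t(x)}\imath(N)\otimes T^*_xM$ and that base point moves with $t$. Second, $\partial_t df_t|_{t=0}=P\,d\xi^T+D^2\pi(\hat f)[\xi^T,d\hat f\,\cdot\,]$, which has an extra term.

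To fill the step, write $\Det(df_t)=\det\bigl(\langle df_t\,e_i,\epsilon_j(f_t)\rangle_e\bigr)_{ij}$. Here $(e_i)$ is an oriented orthonormal basis of $T_xM$ and $(\epsilon_j)$ is an oriented orthonormal tangent frame of $\imath(N)$. Then check two things:
\begin{itemize}
\item $D^2\pi(\hat f)[\xi^T,v]=dP(\hat f)[\xi^T]\,v$ is normal for tangent $v$, by $P^2=P$. So it drops out against the tangent-valued $\Cof d\hat f$.
\item The motion of the frame contributes $\Det(d\hat f)\,\Tr\Gamma$, where $\Gamma_{kj}:=\langle\epsilon_k,d_{\xi^T}\epsilon_j\rangle_e$ is skew-symmetric. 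Hence this contribution is zero.
\end{itemize}
With this added, your formula, and hence your proof, is correct.

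A minor point: elements of $W^{1,\infty}_0(U;\R^d)$ need not have compact support. Since all coefficients are in $L^\infty$, both sides are continuous in $\xi$ for the $W^{1,1}$ norm. A density argument therefore handles this, and it also lets you take $\xi$ smooth alongside your mollification of $\hat f$.
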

\begin{proof}
 This follows from the extrinsic Piola identity proven in \cite{KupfermanMaorShachar2019}
 as discussed in \cite[Sect.~4]{ContiDolzmannMueller2024}.
\end{proof}
The left-hand side of~\eqref{eqpiolacofprelim} is the same expression as in the harmonic map equation (indeed, the harmonic map equation arises from taking variations of $\int |df|^2d\vol_M$, the Piola identity from taking variations of $\int \Det df d\vol_M$). One can therefore write it intrinsically and obtain
\begin{equation}\label{eqdeltagprelim1}
-\Delta_g f =\Gamma^N(df,df)+\Div_g  h +h'
\end{equation}
in the sense of distributions,
and using that $|d\hat f-\Cof d\hat f|\le C \dist(df,SO(M,N))$ from \eqref{eqboundstheo21} one has
\begin{equation}\label{eqhhprimeprelim2}
 |h| + |h'|\le  C   \dist(df,SO(M,N))\quad \text{ pointwise in $U$},
\end{equation}
with $C$ depending on $\Lip(f)$. Since $f$ is Lipschitz,
the right-hand side of~\eqref{eqhhprimeprelim2} is itself bounded.
As in the usual notation for the harmonic map equation, 
here $\Delta_g$ is the Laplace-Beltrami operator, 
and $\Gamma^N(df(p), df(p))$ is a vector-valued quadratic form on $\Lin(T_pM, T_{f(p)} N)$ which is constructed from the metric and its derivatives, and corresponds to $\mathbb A$ in~\eqref{eqpiolacofprelim}.
We denote by $\Lip(f)$ the Lipschitz constant of $f$, and by $[f]_\alpha$ the $\alpha$-Hölder seminorm.

For the convenience of the reader, we give a short self-contained derivation of
the equation we shall actually use, which is
\eqref{eqdeltagprelim1} in coordinates, without using the extrinsic version in Proposition~\ref{prop:weakpiola}.

\begin{proposition}[Piola in coordinates]\label{proppiolacoordinates}
Let $\Lambda>0$, $C_0>0$. Then there is $C=C(n,p,\Lambda,C_0)$ with the following property.

 Let $M$, $N$ oriented $n$-dimensional Riemannian manifolds,
 $O\subset M\setminus \partial M$, $O'\subset N$
 open,
 $\varphi:O\to\R^n$, $\psi:O'\to\R^n$ charts.
 Assume that the coordinate representations of the metrics $g$ and $g_N$ obey
 \begin{equation}\label{eqgC0p}
 \frac1{C_0}\Id \le g\le C_0 \Id, \hskip5mm
 |Dg|+|D^2g|+|D^3g|\le C_0,
\end{equation}
\begin{equation}\label{eqgNC0}
 \frac1{C_0}\Id \le g_N\le C_0 \Id, \hskip5mm
 |Dg_N|+|D^2g_N|+|D^3g_N|\le C_0
\end{equation}
pointwise in $\varphi(O)$ and $\psi(O')$.
Then there is a bilinear map
\begin{equation}
 Q: \varphi(O)\times \psi(O')\to {\textrm Bil}(\R^{n\times n}\times \R^{n\times n};\R^n)
\end{equation}
with
\begin{equation}\label{eqboundsQbilin}
 |Q|+|DQ|+|D^2Q|\le C
\end{equation}
such that  for any open set $U\subset O$ and any $f:U\to O'$ with $\Lip(f)\le\Lambda$ there are $H:\varphi(U)\to \R^{n\times n}$,
$H':\varphi(U)\to \R^{n}$ such that
the coordinate representation $F:=\psi\circ f\circ\varphi^{-1}:\varphi(U)\to\psi(O')$ obeys
\begin{equation}\label{eqLgFQhhpprop22}
 -L_g F = Q(\cdot,F)(DF,DF) + \Div H + H'
\end{equation}
distributionally in $\varphi(U)$, with
\begin{equation}\label{eqhhprimepointwise}
 |H| + |H'| \le C \dist(df, SO(M,N))\circ \varphi^{-1} \text{ a.e. in $\varphi(U)$}.
\end{equation}
\end{proposition}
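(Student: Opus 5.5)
We work entirely in coordinates, identify $U$ with $\varphi(U)$, and write $G:=g$ and $\gamma:=g_N$ for the coordinate metrics. We take $L_gF:=\frac{1}{\sqrt{\det G}}\partial_\alpha(\sqrt{\det G}\,G^{\alpha\beta}\partial_\beta F)$, the Laplace–Beltrami operator applied componentwise. The idea is to combine two identities: the Euclidean Piola identity $\partial_\alpha(\cof DF)_{i\alpha}=0$, valid distributionally for Lipschitz $F$, and a pointwise algebraic comparison between $DF$ and a suitably weighted cofactor. The weighted cofactor is the coordinate version of $\Cof d\hat f$, namely
\begin{equation*}
K(x,F,DF):=\sqrt{\det G(x)}^{-1}\sqrt{\det\gamma(F)}\,\gamma^{-1}(F)\,\cof DF\; ,
\end{equation*}
up to the precise placement of the metric factors. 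We choose it so that its distributional divergence is explicitly computable from the Euclidean Piola identity together with the chain rule applied to $\gamma(F)$. Concretely, $\sqrt{\det\gamma(F)}\,\gamma^{-1}(F)$ is a Lipschitz function of $x$ with derivative $D_y[\cdot](F)\,DF$, so
\begin{equation*}
\partial_\alpha\bigl(c(F)_{ij}(\cof DF)_{j\alpha}\bigr)=\partial_{y_k}c_{ij}(F)\,\partial_\alpha F_k\,(\cof DF)_{j\alpha},
\end{equation*}
which holds in $L^\infty$, hence distributionally. Here we use that the product of a Lipschitz function with a divergence-free $L^\infty$ field has the expected divergence, which follows by mollification.

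The key algebraic step is to establish, at each point $x$, the relation
\begin{equation*}
\sqrt{\det G}\,G^{-1}DF^T\text{-type term}\;-\;K\;=\;O(\dist(df,SO(M,N))).
\end{equation*}
More precisely, we set $A:=\gamma^{1/2}(F)\,DF\,G^{-1/2}(x)$, the matrix of $df$ in orthonormal frames, so that $\dist(df,SO(M,N))=\dist(A,SO(n))$. Since $|A|\le C(\Lambda,C_0)$ and $|A-\cof A|\le C\dist(A,SO(n))$ on bounded sets, we transform back. Using $\cof(PAR)=\cof P\,\cof A\,\cof R$, the term $\sqrt{\det G}\,DF\,G^{-1}$ (which is what appears in $L_g$) equals $K$ plus $H$, where $H=\sqrt{\det G}\,\gamma^{-1/2}(A-\cof A)G^{-1/2}$ with $|H|\le C\dist$. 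We must pick $K$ exactly so that $K=\sqrt{\det G}\,\gamma^{-1/2}\cof A\,G^{-1/2}$; this determines the normalization above.

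Expanding, we write $\sqrt{\det G}\,L_gF=\partial_\alpha(\sqrt{\det G}\,G^{\alpha\beta}\partial_\beta F)=\Div(K)+\Div H$. For $\Div K$ we use the Piola computation. It yields terms of the form $(\partial_y c)(F)\,DF\,\cof DF$ and terms from the $x$-derivative of the $G$-factors multiplied by $\cof DF$. In each such lower-order term we replace $\cof DF$ by its expression in terms of $DF$ plus an error. Writing $\cof DF=\gamma^{1/2}\bigl(\ldots DF\ldots\bigr)+O(\dist)$, the principal part becomes a quadratic form $Q(x,F)(DF,DF)$ (from the $y$-derivatives, which gives the Christoffel-type term) plus a linear term in $DF$ (from the $x$-derivatives). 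The linear term can be absorbed into the bilinear $Q$ only if we allow an affine dependence; alternatively, it matches the $\Gamma^M$-part of $L_g$. We therefore include the corresponding first-order term in $L_g$, or observe that with the divergence-form $L_g$ it cancels identically. This is the step to check most carefully, and we would verify the cancellation via the analogous identity for $F=\id$ with $\gamma=G$, where $\dist=0$. The remaining errors, each bounded by $C\dist$, go into $H'$, and dividing by $\sqrt{\det G}$ gives \eqref{eqLgFQhhpprop22}. The bounds \eqref{eqboundsQbilin} follow because $Q$ involves at most first derivatives of $g$ and $g_N$ and \eqref{eqgC0p}–\eqref{eqgNC0} control three derivatives.

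The main obstacle is the bookkeeping that identifies the lower-order terms of $\Div K$ with a genuine bilinear form in $DF$ up to $O(\dist)$ errors, with no leftover linear term. For this we would exploit that the pointwise quadratic identity $DF^T\gamma(F)DF=G+O(\dist)$ lets us freely trade metric factors, mirroring the extrinsic argument of Proposition~\ref{prop:weakpiola}.
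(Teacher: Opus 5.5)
Your route is the paper's: combine the Euclidean Piola identity $\Div\cof DF=0$ with $|A-\cof A|\le C\dist(A,SO(n))$ for $A=g_N^{1/2}(F)\,DF\,g^{-1/2}$, and pull the $F$-dependent metric factor out of the divergence by the chain rule. The paper works with $\hat G=\cof DF-\frac{\sqrt{\det g}}{\sqrt{\det g_N}}g_N\,DF\,g^{-1}$ and multiplies by $(p^N)^{-1}$ after differentiating; your $H$ is $-\sqrt{\det g_N}\,g_N^{-1}\hat G$.

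As written, however, there is a gap exactly at the step you call the main obstacle. Your explicit $K$ carries a spurious factor $(\det g)^{-1/2}$. Indeed, for an isometry with $\det g\neq 1$ you get $\sqrt{\det g}\,DF\,g^{-1}-K\neq 0$ although $\dist=0$. You then worry about a residual term linear in $DF$, coming from $x$-derivatives of $g$-factors in $K$. None of your proposed remedies proves the statement. An affine $Q$ or a modified $L_g$ changes the claim. Checking $F=\id$, $g_N=g$ cannot establish a cancellation for general $F$.

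The missing observation is that no such term exists. Use $\cof(PBR)=\cof P\,\cof B\,\cof R$, together with $\cof g_N^{1/2}=\sqrt{\det g_N}\,g_N^{-1/2}$ and $\cof g^{-1/2}=(\det g)^{-1/2}g^{1/2}$. Then your requirement $K=\sqrt{\det g}\,g_N^{-1/2}\cof A\,g^{-1/2}$ gives
\begin{equation*}
K=\sqrt{\det g_N(F)}\;g_N^{-1}(F)\cof DF=:c(F)\cof DF .
\end{equation*}
All $g$-factors cancel, so $K$ depends on $x$ only through $F$. Hence
\begin{equation*}
\partial_\alpha K_{i\alpha}=(\partial_h c_{ij})(F)\,\partial_\alpha F_h(\cof DF)_{j\alpha},
\end{equation*}
while every derivative of $g$ stays inside $\partial_\alpha(\sqrt{\det g}\,g^{\alpha\beta}\partial_\beta F)=\sqrt{\det g}\,L_gF$.

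Now insert $\cof DF=c(F)^{-1}(\sqrt{\det g}\,DF\,g^{-1}-H)$. This produces the bilinear term $\sqrt{\det g}\,[(\partial_hc)c^{-1}]_{il}(F)\,g^{-1}_{\alpha\beta}\partial_\alpha F_h\partial_\beta F_l$, plus an error bounded by $C\Lambda|H|$ that goes into $H'$. Dividing by $\sqrt{\det g}$ yields the claim; the derivative of $(\det g)^{-1/2}$ times $H$ also goes into $H'$. Since $(\partial_hc)c^{-1}=-(p^N)^{-1}\partial_hp^N$ with $p^N=g_N/\sqrt{\det g_N}$, this is exactly the paper's $Q$. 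With this correction your argument is complete and coincides with the paper's.
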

The Laplace-Beltrami operator $L_g$ acts on $F$ componentwise,
\begin{equation}\label{eqdefLgfromgbb}
 (L_g F)_k:=
\frac1{\sqrt {\det g}}\partial_\alpha (\sqrt{\det g} g^{\alpha\beta} \partial_\beta F_k).
\end{equation}
Here $g^{\alpha\beta}$ denotes the entries of the inverse of the matrix $g$,
$g^{\alpha\beta}=(g^{-1})_{\alpha\beta}=g^{-1}_{\alpha\beta}$.
Equation~\ref{eqLgFQhhpprop22} with $h$ and $h'$ equal zero is the harmonic map equation.

Condition~\ref{eqhhprimepointwise} immediately implies
 \begin{equation}
  \|h\|_{L^p(\varphi(U))} + \|h'\|_{L^p(\varphi(U))} \le C \|\dist(df, SO(M,N))\|_{L^p(U)}.
 \end{equation}
We recall that the coordinate representation of the metrics is given by the maps
$g: \varphi(U)\to \R^{n\times n}_\sym$,
$g_N: \psi(O')\to \R^{n\times n}_\sym$, defined
by
\begin{equation}
 g(x) v\cdot w = g_{\varphi^{-1}(x)}(d\varphi^{-1}(x)v, d\varphi^{-1}(x)w)
\end{equation}
and
\begin{equation}
 g_N(y) v\cdot w = g_{N,\psi^{-1}(y)}(d\psi^{-1}(y)v, d\psi^{-1}(y)w),
\end{equation}
where the functions in the right-hand side are the metrics seen as intrinsic objects.
 \begin{proof}
For $q\in U$, we shorten $x:=\varphi(q)$, $y:=\varphi(f(q))=F(x)$.

Let $A\in \Lin(T_qM, T_{f(q)}N)$.
The definition in \eqref{eq:defSOMN} means that there is $Q$  in the same space such that
$|Qv|_{T_{f(q)}N}=|v|_{T_qM}$ for all $v\in T_qM$,
and $\dist(A, SO(M,N))=|A-Q|_{\Lin(T_qM;T_{f(q)}N)}$.
The coordinate representation $\hat Q$ of $Q$ then obeys
$g_N\hat Q\hat v \cdot \hat Q\hat v=g \hat v\cdot  \hat v$ for all $\hat v\in\R^n$, which means that  $Q^*:=g_N^{1/2} \hat Qg^{-1/2}\in SO(n)$.
Therefore
for every $x\in \varphi(U)$ there is $Q^*\in SO(n)$ such that
\begin{equation}\label{eqdistDFdfSOnpf}
 |(g_N(F(x)))^{1/2} D F(x) g^{-1/2}(x)-Q^*|\le C
  \dist(df, SO(M,N))(\varphi^{-1}(x)),
\end{equation}
and $Q^*=\cof Q^*$.
We define $G:\varphi(U)\to\R^{n\times n}$ by
\begin{equation}\begin{split}
 G:=&\cof[(g_N\circ F )^{1/2}  D F g^{-1/2}]-(g_N\circ F )^{1/2} D F g^{-1/2},
 \end{split}
\end{equation}
by \eqref{eqdistDFdfSOnpf} we obtain
\begin{equation}\label{eqhpointwisedist}
|G|\le C\dist(df, SO(M,N))\circ\varphi^{-1}
\end{equation}
pointwise, with $C$ depending on $n$, $\Lambda$, $C_0$.
We recall that
in $\R^{n\times n}$ we have $\cof(AB)=\cof A \cof B$, and
(for invertible matrices)
$\cof A = A^{-T}\det A $. Therefore,
writing $g_N$ for $g_N\circ F$ for brevity,
\begin{equation}\begin{split}
 G =&
 \frac{\sqrt{\det g_N}}{\sqrt{\det g}}
 g_N^{-1/2}
 (\cof  D F) g^{1/2}-g_N^{1/2} D F g^{-1/2}\\
 =&
 \frac{\sqrt{\det g_N}}{\sqrt{\det g}}
 g_N^{-1/2}
 [\cof  D F -
 \frac{\sqrt{\det g}}{\sqrt{\det g_N}}
 g_N  D F g^{-1}]g^{1/2}\\
 =& \frac{\sqrt{\det g_N}}{\sqrt{\det g}}
 g_N^{-1/2} \hat G g^{1/2},
 \end{split}
\end{equation}
where
\begin{equation}
 \hat G:=\cof  D F -
 \frac{\sqrt{\det g}}{\sqrt{\det g_N}}
 g_N  D F g^{-1}
\end{equation}
still obeys \eqref{eqhpointwisedist} (with a different $C$).
We remark that all computations done up to now only contain
 $g$, $g_N$, and
first derivatives of $F$ and are therefore valid pointwise almost everywhere, as identities between $L^\infty$ functions.

We know  that $\Div\cof DF=0$ distributionally. Therefore
\begin{equation}\label{eqdivhatadivll}
\Div\hat G = - \Div \left[\frac{\sqrt{\det g}}{\sqrt{\det g_N}}
 g_N  D F g^{-1}\right]
\end{equation}
distributionally,
which is the equation we are after. It remains to rewrite it in a form that contains the Laplace-Beltrami operator $L_g$.  We shall use the product rule to separate the terms with derivatives of $g_N$ (which, we recall, was short for $g_N\circ F$). These terms do not have second derivatives of $F$ but, by the chain rule, have a second factor of $DF$; the rest depends only on $g$ and $F$, up to a positive-definite prefactor that we can move to the other side.

To simplify notation we set $p^N:=g_N/\sqrt{\det g_N}$.
We use greek indices in $\varphi(U)$, and latin ones in $\psi(O')$, both run from 1 to $n$.
Then~\eqref{eqdivhatadivll} reads
\begin{equation}
(\Div\hat G)_j = - \partial_\alpha\Bigl[p^N_{jk}\circ F\,  {\sqrt{\det g }}
  (\partial_\beta F_k)g^{-1}_{\beta\alpha}\Bigr]
\end{equation}
and the product rule gives
\begin{equation}\label{eqdivcof3}
(\Div\hat G)_j= - (\partial_h p^N_{jk})\circ F\,
(\partial_\alpha F_h)
{\sqrt{\det g }}
  (\partial_\beta F_k) g^{-1}_{\beta\alpha}
  - p^N_{jk}\circ F\, \partial_\alpha\Bigl[  {\sqrt{\det g }}
  g^{-1}_{\alpha\beta} \partial_\beta F_k \Bigr].
\end{equation}

We recall~\eqref{eqdefLgfromgbb}.
We divide~\eqref{eqdivcof3}  by $\sqrt{\det g}$, multiply by the inverse matrix of $p^N\circ F$, and rearrange terms to obtain
\begin{equation}\label{eqdivcof4}
  - L_g F_k=
[  (p^N)^{-1}_{kj} \partial_h p^N_{jk'}]\circ F\,
   g^{-1}_{\alpha\beta}
    (\partial_\alpha F_h)(\partial_\beta F_{k'})
  +\frac1{\sqrt{\det g }}
(p^N)^{-1}_{kj}\circ F
  (\Div\hat G)_j.
\end{equation}
The first term on the right-hand side can be rewritten as
$Q_k(x,F(x))(DF,DF)$, with $Q(x,y)$ an $\R^n$-valued bilinear form
on $\R^{n\times n}$
that depends only on the two metrics. Specifically,
\begin{equation}
 Q_k(x,y)(A,B)
:= [  (p^N)^{-1}_{kj} \partial_h p^N_{jl}](y)\,
   g^{-1}_{\alpha\beta}(x)
    A_{h\alpha} B_{l\beta}.
\end{equation}
Inserting the definition of $p^N$,
\begin{equation}
 Q_k(x,y)(A,B)
= [  \sqrt{\det g_N} (g_N)^{-1}_{kj} \partial_h
\frac{(g_N)_{jl}}{\sqrt {\det g_N}}](y)\,
   g^{-1}_{\alpha\beta}(x)
    A_{h\alpha}B_{l\beta};
\end{equation}
since this expression only contains first derivatives of the metrics, the bounds in \eqref{eqboundsQbilin} follow.
In turn, the last term in \eqref{eqdivcof4} can be rewritten as $H'+\Div H$, with
$\|H\|_p+\|H'\|_p\le C\eps$. We obtain
\begin{equation}\label{eqdivcof4b}
\frac1{\sqrt{\det g }}
(p^N)^{-1}_{kj}\circ F
  (\Div\hat G)_j= \partial_\alpha H_{k\alpha} + H'_k
\end{equation}
where
\begin{equation}
 H_{k\alpha}:=\frac{\sqrt{\det g_N\circ F}}{\sqrt{\det g }}
(g_N)^{-1}_{kj}\circ F
  \, \hat G_{j\alpha}
\end{equation}
and
\begin{equation}
 H'_k:=-\hat G_{j\alpha} (p^N)^{-1}_{kj}\circ F
 \partial_\alpha \frac1{\sqrt{\det g }}
 -\hat G_{j\alpha} \frac1{\sqrt{\det g }}
  (\partial_l \frac{(g_N)^{-1}_{kj}}{\sqrt{\det g_N}})\circ F
  \partial_\alpha F_l
\end{equation}
obey $|H|+|H'|\le C |\hat G|$ pointwise, and therefore
\eqref{eqhhprimepointwise}.
\end{proof}

A second ingredient is the following by now standard truncation result. The proof of this version follows, for example, from Step IV in the proof of Theorem 3  in \cite[pp. 390--392]{KupfermanMaorShachar2019} or \cite[Step 1 in the proof of Theorem~4.1(i)]{KroemerMueller2021}, see also
\cite[Prop.~3.1]{ContiDolzmannMueller2024}.
\begin{proposition}\label{proptruncation} \label{pr:lipschitz_approximation}
Let $M$, $N$, $U$, $\imath$ be as in Theorem~\ref{th:W2m_approximation}, $p\in (1,\infty)$. There exist constants $\Lambda$, $C > 0$ with the following property.
If $f \in W^{1,p}(U; N)$ then there exists $\tilde f \in \Lip(U;N)$ such that
\begin{equation}
\Lip(\tilde f) \le \Lambda
\end{equation}
and
\begin{equation}\label{eqlipspprox2}
 \|\imath \circ f -\imath\circ\tilde f\|_{W^{1,p}(U)} \le C \|\dist(df, SO(M,N)) \|_{L^p(U)}.
\end{equation}
In particular,
 \begin{equation}\label{eqlipspprox3}
 \|\dist(d\tilde f, SO(M,N)) \|_{L^p(U)}
\le C \|\dist(df, SO(M,N)) \|_{L^p(U)}.
 \end{equation}
\end{proposition}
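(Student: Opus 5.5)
We prove Proposition~\ref{pr:lipschitz_approximation} by the standard Lipschitz truncation, i.e.\ by modifying $f$ on the set where the maximal function of $|d(\imath\circ f)|$ is large. Write $u:=\imath\circ f\in W^{1,p}(U;\R^d)$ and $\eps:=\|\dist(df,SO(M,N))\|_{L^p(U)}$. Since $N$ is compact and isometries have bounded norm, we have pointwise $|du|\le C+\dist(df,SO(M,N))$, and moreover $|du|\le 2\dist(df,SO(M,N))$ wherever $|du|\ge 2\sqrt n$. The constant $\Lambda$ will be a fixed multiple of $\sqrt n$, chosen after the extension step.

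First, using finitely many charts covering $\overline U$ (which is compact in $M\setminus\partial M$) and the Lipschitz regularity of $U$, we extend $u$ to $Eu\in W^{1,p}(\R^n;\R^d)$ chart by chart. We use an extension operator that is bounded on $W^{1,p}$ and also controls the ``excess'' part. The cleanest way is to apply the extension to the truncated gradient excess: split $|du|\le 2\sqrt n+ g$, where $g:=(|du|-2\sqrt n)_+\le 2\dist(df,SO(M,N))$, so that $\|g\|_{L^p}\le 2\eps$.

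Next we apply the pointwise estimate $|v(x)-v(y)|\le C|x-y|\,(\mathcal M|Dv|(x)+\mathcal M|Dv|(y))$ on the bad set $B:=\{\mathcal M|D(Eu)|>\lambda\}$ with $\lambda$ a large fixed constant. By the weak-type bound applied to the excess,
$$|B|\le C\lambda^{-p}\int g^p\le C\eps^p,\qquad\text{more precisely}\quad \lambda^p|B|\le C\int_{\{|du|>\lambda/2\}}|du|^p\le C\eps^p .$$
Here we use that $\mathcal M|Du|>\lambda$ forces the mass of $|Du|$ above level $\lambda/2$ to be large, and we choose $\lambda\ge 4\sqrt n\cdot C_{\rm chart}$. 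On $U\setminus B$ the map $u$ is $C\lambda$-Lipschitz (with respect to the chart distances; by patching, with respect to $\dist_M$ on scales below the Lebesgue number, and trivially at larger scales since $N$ is bounded). We extend $u|_{U\setminus B}$ by a McShane/Kirszbraun extension to a $C\lambda$-Lipschitz map $w:U\to\R^d$.

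Since $w$ need not take values in $\imath(N)$, we compose with the nearest-point projection $\pi$ onto $\imath(N)$. To keep $w$ inside the tubular neighbourhood, we note that $\dist(w(x),\imath(N))\le C\lambda\,\dist(x,U\setminus B)$, which is small when $|B|$ is small, because $B$ contains no ball of radius larger than $C|B|^{1/n}$. In the regime $\eps\ge\eps_0$ we instead take $\tilde f$ constant and use $\|u-\text{const}\|_{W^{1,p}}\le C\le C\eps/\eps_0$. We then set $\tilde f:=\imath^{-1}\circ\pi\circ w$, which gives the bound on $\Lip(\tilde f)$ with $\Lambda:=C\lambda$.

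For the estimate \eqref{eqlipspprox2}, $u=\imath\circ\tilde f$ outside $B$, so
$$\|d(u-\imath\circ\tilde f)\|_{L^p}^p\le C\int_B(|du|^p+\Lambda^p)\le C\eps^p ,$$
using $\int_B|du|^p\le C\int_B(1+g^p)$ together with $|B|\le C\eps^p$. The $L^p$ part of the norm follows from Poincaré's inequality on the connected Lipschitz set $U$ applied to $u-\imath\circ\tilde f$, which vanishes on $U\setminus B$, a set of measure at least $|U|/2$ for small $\eps$. Finally, \eqref{eqlipspprox3} follows from the triangle inequality, since $\dist(\cdot,SO)$ is $C$-Lipschitz in $du$ for points with nearby images; on $B$ we simply bound it by $C(1+|du|)$.

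The main obstacle is the measure bound $\lambda^p|B|\le C\eps^p$: the bound must involve only the excess $g$ and not the full $\|du\|_p$, which is of order one. The approach is to use the maximal-function estimate $|\{\mathcal M h>\lambda\}|\le C\lambda^{-1}\int_{\{h>\lambda/2\}}h$ for $h=|D(Eu)|$, combined with an extension operator that does not create gradient excess beyond $C\eps$. To achieve the latter, I would first work locally in balls well inside $U$ and handle the boundary by a reflection in bi-Lipschitz charts that preserves the pointwise bound $|D(Eu)|\le C(1+g\circ\text{reflection})$.
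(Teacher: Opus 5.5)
Your proposal is correct and follows the standard argument the paper relies on. The paper gives no proof of its own; it cites the truncation step of Kupferman--Maor--Shachar, Kr\"omer--M\"uller and Conti--Dolzmann--M\"uller, which consists of maximal-function truncation of $\imath\circ f$ at a fixed level, Lipschitz extension, nearest-point projection onto $\imath(N)$ once the bad set is small, and a constant map when $\eps$ is not small.

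Three points should be made explicit when you write it out:
\begin{enumerate}
\item The extension bound $|D(Eu)|\le C(1+g\circ R)$ holds precisely because $\imath\circ f$ is bounded, so the cutoff terms $u\,D\theta$ are harmless.
\item The smallness of $\dist(x,U\setminus B)$ should be derived from the measure-density bound $|U\cap B_\rho(x)|\ge c\rho^n$ for the Lipschitz set $U$, rather than from ``$B$ contains no large ball''.
\item In the paper's setting $\overline U$ may touch $\partial M$, contrary to your parenthetical remark. This is harmless, since all the work is done in coordinate charts.
\end{enumerate}
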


We next define what we mean by a Lipschitz subset of a manifold.
Recall that a set $A\subset M$ is open if for any $x\in A$ there is $r>0$ such that $B_r^{(M)}(x)\subset A$.
This means that, possibly after reducing $r$,
there is a chart $\varphi:B_r^{(M)}(x)\to \R^n$ such that the following holds:
if $x\in M\setminus \partial M$,
$\varphi(A\cap B_r^{(M)}(x))$ is open in $\R^n$; if $x\in \partial M$, then
$\varphi(A\cap B_r^{(M)}(x))$ is relatively open in $\R^n_-:=\R^n\cap\{x_n\le0\}$.
As usual in this context, for $x\in\R^n$ we write $x=(x',x_n)$, $x'\in\R^{n-1}$, $x_n\in\R$.

\begin{definition}\label{defLipschitz}
A subset $U$ of a manifold $M$ is an open, bounded Lipschitz set if $U\subset M\setminus\partial M$ is open,
$\overline U\subset M$ is compact,  and
for every $x\in \partial U$ there are
an open set $O\subset M$  with $x\in O$, a chart $\varphi\in C^\infty(O;\R^n)$, an open set $\omega\subset\R^n$ with $\varphi(x)\in\omega$, and a function $\gamma\in\Lip(\R^{n-1})$ such that
\begin{equation}\label{eqdeflipschitzboundary}
 \omega \cap \varphi(O\cap U) = \omega\cap \{y\in \R^n: y_n<\gamma(y')\}.
\end{equation}
\end{definition}
We first recall
that
if this  property holds for one chart, then it holds for all charts, up to an affine change of variables (see also \cite[Sect.~4.1]{HofmannMitreaTaylor2007}).
\begin{remark}
Let $U$ be as in Definition~\ref{defLipschitz}, let $\hat\varphi\in C^\infty(\hat O;\R^n)$ be a chart, $x\in\partial U\cap  \hat O$. Then
there are an affine bijective map $I:\R^n\to\R^n$, $\hat\omega\subset\R^n$ open with $\hat\varphi(x)\in\hat\omega$, $\hat\gamma\in\Lip(\R^{n-1})$, and an open set $O'\subset \hat O\subset M$ with $x\in O'$ such that
\begin{equation}\label{eqdeflipschitzboundaryhat}
 \hat{\omega}  \cap \hat{\varphi}(O'\cap U)  = \hat{\omega}\cap I \{y\in \R^n: y_n<\hat{\gamma}(y')\}.
\end{equation} 
\end{remark}
An elementary proof is based on  Lemma~\ref{lemmalipschitzset} below.
Indeed,
fix $O$, $\varphi$, $\omega$, $\gamma$ as in~\eqref{eqdeflipschitzboundary}.
Since the charts $\varphi$ and $\hat\varphi$ are compatible, there
are an open set
$O'\subset O\cap \hat O$ with $x\in O'$,
an open set $\omega'\subset\R^n$ with $\hat\varphi(O')\subset\omega'$,
and a smooth diffeomorphism $\Psi:\omega'\to\Psi(\omega')\subset\R^n$ 
such that $\varphi=\Psi\circ\hat\varphi$ on $O'$.
We pick $\omega''\subset \omega'$ open with $\hat\varphi(x)\in\omega''$ and 
$\Psi(\omega'')\subset\omega$.  Equation~\eqref{eqdeflipschitzboundary} implies
\begin{equation}\label{eqdeflipschitzboundaryOp}
\Psi(\omega'')\cap \varphi(O'\cap U) = \Psi(\omega'')\cap \{ y_n<\gamma(y')\}.
\end{equation}
By Lemma~\ref{lemmalipschitzset}, possibly reducing $\omega''$ 
(still with $\hat\varphi(x)\in\omega''$)
we see that there are an affine isomorphism $I:\R^n\to\R^n$ and $\Gamma\in \Lip(\R^{n-1})$ with
 \begin{equation}\label{eqlipchvar}
  \Psi(\omega'' \cap I 
  \{y_n<\Gamma(y')\})
  = \Psi(\omega'')\cap \{y_n<\gamma(y')\}.
 \end{equation}
Combining~\eqref{eqlipchvar} 
and~\eqref{eqdeflipschitzboundaryOp} gives
 \begin{equation}
     \Psi(\omega'') \cap \varphi(O'\cap U) 
=  \Psi(\omega'' \cap I 
  \{y_n<\Gamma(y')\}).
 \end{equation}
We insert $\varphi=\Psi\circ\hat\varphi$ and use bijectivity of $\Psi$ to conclude
 \begin{equation}
   \omega'' \cap \hat\varphi(O'\cap U) 
   = \omega'' \cap I 
  \{y_n<\Gamma(y')\}
   ,
 \end{equation}
which concludes the proof of \eqref{eqdeflipschitzboundaryhat}.

\begin{lemma}\label{lemmalipschitzset}
 Let $\omega$, $\Omega\subset\R^n$ be open,
 $\Psi\in C^1(\Omega;\omega)$ be a diffeomorphism,
 $L\ge 1$, $z\in\omega$.

 Then there are an open set $R$ with
 $\Psi^{-1}(z)\in R \subset\Omega$
 and an affine isomorphism $I:\R^n\to\R^n$
 such that for any $\gamma\in\Lip(\R^{n-1})$ with $\Lip(\gamma)\le L$ and $\gamma(z')=z_n$ there is $\Gamma\in \Lip(\R^{n-1})$ with $\Lip(\Gamma)\le 4L$ such that
 \begin{equation}\label{eqliiptransfsetpsi}
  \Psi(R \cap I 
  \{x\in \R^n: x_n<\Gamma(x')\})
  = \Psi(R) \cap \{y\in \R^n: y_n<\gamma(y')\}.
 \end{equation}
\end{lemma}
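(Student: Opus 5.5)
Set $x_0:=\Psi^{-1}(z)$ and $A:=D\Psi(x_0)$, which is invertible. I will take $I$ to be the affine map $I(x):=x_0+A^{-1}(x-z)$, so that $\Psi\circ I$ is a $C^1$ diffeomorphism near $z$ fixing $z$, with $D(\Psi\circ I)(z)=\Id$. Write $\Phi:=\Psi\circ I$. Choosing $R$ amounts to choosing a small neighbourhood $V:=I^{-1}(R)$ of $z$. Identity \eqref{eqliiptransfsetpsi} then becomes
\begin{equation*}
\Phi\bigl(V\cap\{x_n<\Gamma(x')\}\bigr)=\Phi(V)\cap\{y_n<\gamma(y')\}.
\end{equation*}
Since $\Phi$ is a bijection of $V$ onto $\Phi(V)$, this is equivalent to saying that, for $x\in V$,
\begin{equation*}
x_n<\Gamma(x')\iff \Phi_n(x)<\gamma(\Phi'(x)).
\end{equation*}

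By continuity of $D\Phi$, I choose $\delta>0$ so small that $|D\Phi-\Id|\le \eta$ on the cube $V:=z+(-\delta,\delta)^n\subset I^{-1}(\Omega)$, where $\eta=\eta(L)$ is a small parameter to be fixed (say $\eta\le 1/(8L)$). Then $R:=I(V)$ depends only on $\Psi$, $z$ and $L$, not on $\gamma$, as the lemma requires. Consider
\begin{equation*}
\phi(x):=\Phi_n(x)-\gamma(\Phi'(x)).
\end{equation*}
For fixed $x'$, the map $t\mapsto\phi(x',t)$ is strictly increasing on $(z_n-\delta,z_n+\delta)$. Indeed, for $s<t$,
\begin{equation*}
\phi(x',t)-\phi(x',s)\ge (1-\eta)(t-s)-L\eta(t-s)\ge\tfrac12(t-s).
\end{equation*}
Hence the set $\{\phi<0\}\cap V$ is the region below a graph. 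I define $\Gamma(x')$ as the unique zero of $t\mapsto\phi(x',t)$ in $(z_n-\delta,z_n+\delta)$ when such a zero exists; otherwise $\Gamma(x')$ is $z_n+\delta$ or $z_n-\delta$, according to the sign of $\phi$ there. Equivalently, $\Gamma(x')$ is the truncation to $[z_n-\delta,z_n+\delta]$ of $\sup\{t:\phi(x',t)<0\}$. With this definition, $x_n<\Gamma(x')\iff\phi(x)<0$ on $V$.

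The Lipschitz bound is an implicit-function estimate. In the in-range case, take $x',\tilde x'$ with zeros $t,\tilde t$. Then
\begin{equation*}
0=\phi(\tilde x',\tilde t)-\phi(x',t)=\bigl[\phi(\tilde x',\tilde t)-\phi(\tilde x',t)\bigr]+\bigl[\phi(\tilde x',t)-\phi(x',t)\bigr].
\end{equation*}
The first bracket has size at least $\tfrac12|\tilde t-t|$. The second is at most $(\eta+L(1+\eta))|\tilde x'-x'|$, because $|\partial_{x'}\Phi_n|\le\eta$ and $|\partial_{x'}\Phi'|\le 1+\eta$. This gives
\begin{equation*}
|\Gamma(\tilde x')-\Gamma(x')|\le 2(L(1+\eta)+\eta)|\tilde x'-x'|.
\end{equation*}
For small $\eta$ this is bounded by $(2L+2\eta(L+1))|\tilde x'-x'|\le 4L|\tilde x'-x'|$, using $L\ge1$; the factor $2$ from monotonicity is the reason a constant like $4L$ appears. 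The truncation cases follow by the same comparison, since the clamped function is a truncation of a monotone construction. A cleaner route is to define $\Gamma$ on all of $\R^{n-1}$ as the clamped value on $(z'-\delta,z'+\delta)^{n-1}$ and then apply McShane extension, which preserves the Lipschitz constant $4L$. Values of $\Gamma$ outside the projection of $V$ are irrelevant, since only $x\in V$ matters.

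The main obstacle is bookkeeping in the truncation cases: one has to check that the clamped definition still gives exactly $V\cap\{x_n<\Gamma\}=V\cap\{\phi<0\}$ and remains $4L$-Lipschitz. For the equivalence, monotonicity in $x_n$ together with the cube structure of $V$ suffices. For the Lipschitz bound, I would handle it by extending $\phi(x',\cdot)$ beyond the interval. Concretely, I would replace $\phi$ by the function equal to $\phi$ inside the cube and continued linearly with slope $1/2$ outside in the $x_n$-direction; then a unique zero always exists, and clamping afterwards preserves the Lipschitz constant. The normalization $\gamma(z')=z_n$ is not needed for the bound itself; it only ensures the graph passes through $z$, so that $\Gamma$ is nontrivial near $z$.
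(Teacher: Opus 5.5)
Your proof is correct, but you handle the existence of the zero differently from the paper. The paper uses the same affine normalization (with $z=0$) but works on an anisotropic box $B'_\rho\times(-4L\rho,4L\rho)$ with $|D\Psi-\Id|\le\eta_L=1/(2+2L)$. It then uses $\gamma(0)=0$ and $\Psi(0)=0$ to show $|\varphi_{x'}(0)|<2L\rho$, so $\varphi_{x'}(\pm 4L\rho)$ have opposite signs. Hence for every $x'\in B'_\rho$ the zero exists inside the box, and $\Gamma$ is the genuine implicit function with $\Gamma(0)=0$; the Lipschitz bound is then the same two-bracket comparison you wrote.

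You instead take a cube, where the zero can leave through the top or bottom face near the lateral boundary, and you repair this by clamping $\Gamma$ to $[z_n-\delta,z_n+\delta]$. Your slope-$1/2$ linear continuation of $\phi(x',\cdot)$ makes this rigorous. The continued function still increases with slope at least $1/2$ in $t$, and for each fixed $t$ it is $(\eta+L(1+\eta))$-Lipschitz in $x'$. So its unique zero is $4L$-Lipschitz, and clamping preserves that bound. You should choose $\delta$ so that the closed cube lies where $|D\Phi-\Id|\le\eta$, so that $\phi(x',z_n\pm\delta)$ is defined.

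What your route buys: the set identity and the Lipschitz bound do not use $\gamma(z')=z_n$, and you make the extension of $\Gamma$ to all of $\R^{n-1}$ explicit via McShane, which the paper leaves implicit. What the paper's route buys: the graph of $\Gamma$ crosses the box from side to side without touching its top or bottom. Thus $R\cap I\{x_n<\Gamma(x')\}$ has exactly the Lipschitz-subgraph-in-a-box form of Definition~\ref{defLipschitz} and of the sets $\omega_{r,\gamma}$ in Section~\ref{se:smith}, with no clamped, degenerate pieces. For the Remark where the lemma is applied, either version suffices, since $\Gamma(z')=z_n$ and the neighbourhood is shrunk there anyway.
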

\begin{figure}
 \begin{center}
\includegraphics[width=12cm]{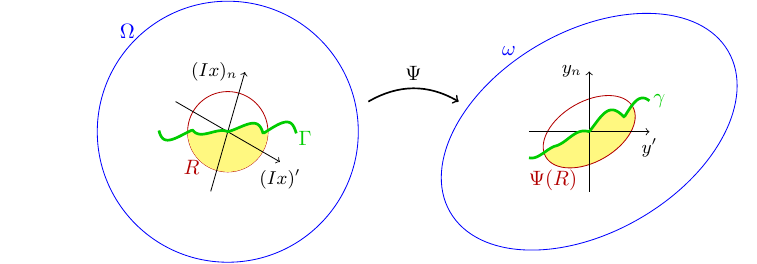}
 \end{center}
\caption{Sketch of the geometry in Lemma~\ref{lemmalipschitzset}. The points in the set $\Omega$ in the left are labeled $Ix$, with $x$ being the local coordinates representation, in which the curve $\Gamma$ is defined. The points on the right are labeled $y$ and relate to the curve $\gamma$.}
\end{figure}
Setting $z^*:=I^{-1}\Psi^{-1}(z)$ one obtains 
$z^*_n=\Gamma((z^*)')$.
\begin{proof}
We can assume $z=0$.
We choose an affine isomorphism $I:\R^n\to\R^n$ such that
$\hat\Psi:=\Psi\circ I$,
$\hat\Psi\in C^1(\hat\Omega;\omega)$,
with $\hat\Omega:=I^{-1}(\Omega)$,
obeys
  $\hat\Psi(0)=0$
 and $D\hat\Psi(0)=\Id$
 (this means $I(x)=\Psi^{-1}(0)+D(\Psi^{-1})(0)x$). It suffices to prove the assertion for $\hat\Psi$ with $I$ replaced by the identity. For notational simplicity we denote $\hat\Omega$ and $\hat\Psi$ by $\Omega$ and $\Psi$.

Let $\eta_L:=1/(2+2L)$.
We choose $\rho>0$ so  that
$R_\rho:=
 B'_\rho\times (-4L\rho,4L\rho)\subset \Omega$ and
$|D\Psi-\Id|\le \eta_L$ in  $R_\rho$.
This implies that
\begin{equation}\label{eqPsiIdlip}
 |\Psi(x)-\Psi(\hat x)-(x-\hat x)|
 \le \eta_L |x-\hat x| \hskip1cm
\text{
 for any $x,\hat x\in R_\rho$},
 \end{equation}
 so that in particular $\Psi$ is $(1+\eta_L)$-Lipschitz in this set.

Fix $x'\in B'_\rho$. We seek $\Gamma(x')$ such that
$\Psi_n((x',\Gamma(x')))=\gamma(\Psi'((x',\Gamma(x'))))$.
We define $\Gamma(x')$ by showing that the function
\begin{equation}\label{eqdefvarphilip}
 \varphi(t):=\varphi_{x'}(t):=\Psi_n((x',t))- \gamma(\Psi'((x',t)))
\end{equation}
has a unique zero in $(-4L\rho,4L\rho)$.
To do this, for $t,s$ in this interval we evaluate
\begin{equation}
 \varphi(t)- \varphi(s)=\Psi_n((x',t))- \Psi_n((x',s))-\gamma(\Psi'((x',t)))+\gamma(\Psi'((x',s)))
\end{equation}
and observe that by \eqref{eqPsiIdlip} we have
\begin{equation}\begin{split}
| \varphi(t)- \varphi(s)-(t-s)| \le& \eta_L |t-s| + L |\Psi'((x',t))-\Psi'((x',s))|\\
\le& \eta_L |t-s| + L \eta_L |t-s|\le \frac12 |t-s|,
\end{split}\end{equation}
so that for $-4L\rho\le s<t\le 4L\rho$ we have
\begin{equation}\label{eqvarphimon}
 \frac12 (t-s)\le
  \varphi(t)- \varphi(s)
  \le \frac32 (t-s).
\end{equation}
Therefore $\varphi$ is continuous and strictly increasing. To prove that it has a unique zero, it suffices to show that it changes sign. For this, we estimate, recalling $\Psi(0)=0$ and $\gamma(0)=0$,
\begin{equation}\begin{split}
 |\varphi(0)|\le &|\Psi_n((x',0))|+ |\gamma(\Psi'((x',0)))|
 \le \eta_L |x'|+ L (1+\eta_L) |x'|\\
 < & (L+1)\rho\le 2L\rho.
\end{split}\end{equation}
Using twice the first inequality in \eqref{eqvarphimon},
\begin{equation}
 \varphi(-4L\rho)
 \le -2L\rho + \varphi(0)<0,
\hskip1cm
 \varphi(4L\rho)
 \ge 2L\rho + \varphi(0)>0.
\end{equation}
Therefore $\varphi$ has a unique zero, which defines $\Gamma(x')$.

If $x_n<\Gamma(x')$,
which means that $\Psi(x)$ is in the set on the left of \eqref{eqliiptransfsetpsi},
then $\varphi(x_n)<0$,
which means that
$\Psi_n(x)< \gamma(\Psi'(x))$,
so that $\Psi(x)$ is also in the set on the right. A similar argument holds for the other inequality, therefore the two sets are equal.

It remains to show that $\Gamma$ is a Lipschitz function.
Consider two points $x'$ and $z'\in B'_\rho$,
let $t:=\Gamma(x')$, $s:=\Gamma(z')$.
These definitions mean that
\begin{equation}
 0=\Psi_n((x',t))-\gamma(\Psi'((x',t)))
 =\Psi_n((z',s))-\gamma(\Psi'((z',s))).
\end{equation}
Again by \eqref{eqPsiIdlip},
\begin{equation}
|\Psi_n((x',t))-
\Psi_n((z',s))- (t-s)|
\le \eta_L(|x'-z'|+|t-s|)
\end{equation}
and
\begin{equation}\begin{split}
|\gamma(\Psi'((x',t)))
-\gamma(\Psi'((z',s)))|
\le& L
|\Psi'((x',t))
-\Psi'((z',s))|\\
     \le& L(|x'-z'|+\eta_L (|x'-z'|+|t-s|)).
\end{split}    \end{equation}
Combining the two,
\begin{equation}
 (1-\eta_L-L\eta_L)|t-s|\le (L+\eta_L+L\eta_L) |x'-z'|
\end{equation}
and recalling the definition of $\eta_L$
\begin{equation}
\frac12|t-s|\le 2L |x'-z'|,
\end{equation}
so that $\Gamma$ is $4L$-Lipschitz.

From the definition in \eqref{eqdefvarphilip}, it follows immediately that $\Gamma(0)=0$.

\end{proof}

\section{Proof of the main theorem}\label{secmainproof}

The proof is based on a local result stated in Proposition~\ref{prop:W2m_local}.
We first show how \Cref{th:W2m_approximation} follows from
Proposition~\ref{prop:W2m_local}, and in the rest of this Section we give a proof of the Proposition. In turn, this uses the results that are presented in the following sections.

\newcommand{\qinM}{q}
\newcommand{\QinM}{Q}
\begin{proposition}\label{prop:W2m_local} 
Let $M$, $N$, $U$, $\imath$ be as in Theorem~\ref{th:W2m_approximation}.
Let $p,m  \in (1, \infty)$, $\Lambda>0$, $\delta>0$.
Fix open sets
$O'\subset\subset O\subset N$,   a chart $\psi:O\to\R^n$,  $\qinM\in \overline U$.

Then there are $C>0$,
open sets $\omega'\subset\subset \omega\subset M$, with $\qinM\in\omega'$,
such that the following holds:

For any $f:\omega\cap \overline U\to O$ with $f(\qinM)\in O'$ and $\Lip(f)\le\Lambda$, there exists
a map $\hat f\in (C^1\cap W^{2,m}\cap W^{1,p})(\omega'\cap U; O)$ such that
\begin{eqnarray}
\| \imath\circ \hat f - \imath\circ f\|_{W^{1,p}(\omega'\cap U)} &\le &C \| \dist(df, SO(M,N)) \|_{L^p(\omega\cap U)},  \label{eq:W1p_estimateimath} \\
\| d(\imath\circ \hat f)\|_{L^\infty(\omega'\cap U)} &+& \| \nabla d (\imath\circ \hat f)\|_{L^m(\omega'\cap U)}
 \le  C,  \label{eq:W2m_extrinsicimath}
\\
 \|\imath\circ \hat f-\imath\circ f\|_{L^\infty(\omega'\cap U)} &\le &  \delta.
 \label{eq:Linfty_extrinsicimath}
\end{eqnarray}
\end{proposition}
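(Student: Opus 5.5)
The approach is to work entirely in coordinates, using charts on $M$ near $\qinM$ and the given chart $\psi$ on $N$. Then $\hat f$ is built in two stages: a harmonic replacement, followed by a singular-integral splitting. Throughout, the quantities are $\eps:=\|\dist(df,SO(M,N))\|_{L^p(\omega\cap U)}$ and the Lipschitz bound $\Lambda$. The constants must depend only on the data and not on $f$.

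\textbf{Choice of the localization.} Pick a chart $\varphi$ near $\qinM$. If $\qinM\in\partial U$, use Definition~\ref{defLipschitz}, via the Remark and Lemma~\ref{lemmalipschitzset}, so that $\varphi(U)$ is locally a Lipschitz subgraph $\{y_n<\gamma(y')\}$ with $\Lip(\gamma)\le 4L$. Because $f(\qinM)\in O'\subset\subset O$ and $\Lip(f)\le\Lambda$, choosing $\omega$ of diameter $r\le \dist(O',\partial O)/(2\Lambda)$ and also $r\le\delta/(C\Lambda)$ makes $f(\omega\cap\overline U)$ lie in a fixed compact subset of $\psi(O)$. It also makes $f$ oscillate by less than $\delta/2$ on $\omega$. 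On these sets the coordinate metrics satisfy \eqref{eqgC0p} and \eqref{eqgNC0} with a uniform $C_0$. Proposition~\ref{proppiolacoordinates} then gives
\begin{equation*}
-L_gF=Q(\cdot,F)(DF,DF)+\Div H+H', \qquad \|H\|_{L^p}+\|H'\|_{L^p}\le C\eps
\end{equation*}
on $V:=\varphi(\omega\cap U)$, with $\|H\|_\infty+\|H'\|_\infty\le C$.

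\textbf{Step 1: harmonic replacement.} Solve the harmonic map system $-L_gF_*=Q(\cdot,F_*)(DF_*,DF_*)$ in $V$ with $F_*=F$ on $\partial V$. I would do this by a fixed point in $W^{1,\infty}$-type spaces, or in $W^{1,s}$ with $s$ large, near $F$. Set $F_*=F+w$, where $w$ solves
\begin{equation*}
-L_g w=Q(F_*)(DF_*,DF_*)-Q(F)(DF,DF)-\Div H-H', \qquad w\in W^{1,s}_0.
\end{equation*}
Smallness of $\eps$ would make the map contractive, but the nonlinearity is quadratic in the gradient, so contraction has to come from smallness of the domain. The diameter $r$ is therefore chosen small depending on $\Lambda$, and elliptic $L^s$ estimates that are uniform on Lipschitz domains are used. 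Where $\eps$ is not small, take $\hat f=f$ trivially? That fails \eqref{eq:W2m_extrinsicimath}, so instead the fixed point must be run with data $-\Div H - H'$ bounded in $L^\infty$ but only small in $L^p$. The contraction then uses $\|w\|_{W^{1,\infty}}$-type norms controlled by $\Lambda$ and the small domain. The outcome is $\|D(F_*-F)\|_{L^p}\le C\eps$, $\|F_*-F\|_\infty\le\delta/2$, and $\Lip(F_*)\le C$. Interior Schauder or $W^{2,m}$ estimates on balls $B_{d(x)/2}(x)$, with $d=\dist(\cdot,\partial V)$, applied to $F_*-\ell$ and rescaled, give \eqref{eqd2fintro}: $|D^2F_*|\le A/d+B$. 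Here $A$ is a local average of $|DF_*-DF|$ plus a local maximal function of $H$, hence small in $L^p$ and bounded in $L^m$, and $B$ is bounded.

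\textbf{Step 2: singular-integral splitting (main obstacle).} Decompose $F_*=F_1+F_2$, with $\|DF_1\|_{L^p}\le C\eps$ and $\|F_2\|_{W^{2,m}}+\|DF_2\|_\infty\le C$. Near the Lipschitz boundary I would use a Whitney decomposition of $V$ and a partition of unity $\chi_j$. On each cube one subtracts local affine approximations, or one uses a regularized-distance (Stein/Smith-type) extension operator with mollification at scale comparable to $d$. Then $F_2$ is a smoothing of $F_*$ at scale $\theta d$, and $F_1=F_*-F_2$. The bounds $|D^2F_2|\lesssim A/d+B$ integrated against Hardy-type inequalities on Lipschitz domains are the delicate point: the $1/d$ singularity must be absorbed using the $L^m$ bound on $A$ together with the Lipschitz bound. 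I expect this to be the hardest part, and I would first try a Hardy/Whitney estimate in the spirit of Smith's bounds for the Poincaré operator. Finally, setting $\hat f:=\psi^{-1}\circ F_2$ on $\omega'\cap U$, with $\omega'\subset\subset\omega$, requires $F_2\in\psi(O)$, which is guaranteed by the $L^\infty$ closeness of at most $\delta$. Smoothness of $\psi^{-1}$ and $\imath$ then transfers the estimates to \eqref{eq:W1p_estimateimath}–\eqref{eq:Linfty_extrinsicimath}, and Morrey's embedding gives $C^1$ when $m>n$; for $m\le n$, one applies the result with a larger $m$ on a bounded domain.
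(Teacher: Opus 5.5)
Your outline follows the paper's architecture: harmonic replacement, then a weighted bound $|D^2F_*|\le A/d+B$, then a singular-integral splitting. The gap is that you never use a rigidity estimate, and without one the weighted bound with $A$ small in $L^p$ does not follow. The interior estimate on $B_\rho(y)$, $\rho\sim d(y)$, bounds $|D^2F_*|(y)$ by $C\rho^{-1-n/s}\|DF_*-(DF_*)_{y,\rho}\|_{L^s(B_\rho(y))}+C$. So $A$ must control the oscillation of $DF_*$ on Whitney balls, equivalently (up to $|DF_*-DF|$) that of $DF$. In your proof $\dist(df,SO(M,N))$ enters only through the Piola remainders $H,H'$. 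These measure $\cof DF-DF$ and do not see this oscillation.

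Here is a counterexample. Take $n=2$, flat metrics, $V$ the upper half of the unit disc, and $F$ holomorphic with $F'(z)=z^{it}$ for small $t>0$. Then $DF$ is conformal, so $\cof DF=DF$ and $H=H'=0$. Also $F$ is harmonic, so $F_*=F$ and your $A$ vanishes identically. Yet $|F'|=e^{-t\arg z}\in[e^{-\pi t},1]$, so $\Lip(F)\le 1$ and $\dist(DF,SO(2))\le Ct$, while $|D^2F|\ge c\,t/|z|\notin L^m$ for $m\ge 2$. Your bound with $B$ bounded is therefore false.

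The paper gets the weighted bound from Lemma~\ref{lemmaFJMwithprefactor}, which is Friesecke--James--M\"uller rigidity with both metrics frozen at the centre of the ball. It gives $\|D\tilde F-(D\tilde F)_{y,\rho}\|_{L^s(B_\rho(y))}\le C\|P\|_{L^s(B_\rho(y))}+C\rho^{1+n/s}$, where $P=\dist(X\,D\tilde F\,Y,SO(n))$, $X=g_N^{1/2}\circ F$, $Y=g^{-1/2}$, and $\|P\|_{L^p}\le C\eps$. Then $A$ is a multiple of the $s$-maximal function of $P$, with $s<p$.

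Two further steps fail as written. The first is the Dirichlet harmonic replacement on the Lipschitz set $V$. To absorb $Q(Dw,Dw)\in L^{s/2}$ into $W^{-1,s}$ and keep $w$ bounded, you need the $W^{1,s}$ Dirichlet theory for some $s>n$. On a general Lipschitz domain that theory holds only for $s$ in a bounded range around $2$ (Jerison--Kenig), which excludes $s>n$ once $n\ge 4$. $W^{1,\infty}$ bounds fail already for the Laplacian at reentrant corners. The paper never imposes a condition on $\partial U$. It extends $g$ to a small ball $B_r$, sets $Q,H,H'$ to zero outside $\varphi(U)$, and solves for $Z\in W^{1,m}_0(B_r)$ (Proposition~\ref{prop:harmonic_replacement_interior}). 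Scaling then gives uniform $W^{1,s}$ and $W^{2,s}$ bounds for every $s$. $\tilde F=F+Z$ is a harmonic map only in $\varphi(U)\cap B_r$, which is all that is used later.

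The second is the splitting. Smoothing $F_*$ at the Whitney scale $\theta d$ does not remove the $A/d$ part; in the example above $D^2F$ is already smooth at that scale. So your $F_2$ would not be bounded in $W^{2,m}$. The paper instead splits $D^2\tilde F$ itself, thresholding $|D^2\tilde F|$ against $2B$. It writes the singular piece as $\Div\hat A^*$ with $\|\hat A^*\|_{L^p}\le p\|\hat A\|_{L^p}$, using a one-dimensional Hardy inequality in $x_n$ (Lemma~\ref{lemmaxntodiv}). It then reconstructs the function with Smith's kernel $K$ supported in a cone pointing into the domain (Propositions~\ref{propsmith} and~\ref{propLipschitzboundary}). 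Hence $Df_h=D^2K*(\eta\hat A^*)$ and $D^2f_l=D^2K*B'$ are Calder\'on--Zygmund bounded.

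Finally, you need $F_2$ itself, not just $F_*$, to be $L^\infty$-close to $F$. The paper interpolates $W^{1,p}$-smallness against uniform $W^{1,\infty}$ bounds when $\eps$ is small, and takes $\hat F$ constant otherwise.
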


\begin{proof}[Proof of Theorem~\ref{th:W2m_approximation} from Proposition~\ref{prop:W2m_local}]
\mbox{}

\begin{enumerate}
\item   \emph{Reduction to the case that $f$ is $\Lambda$-Lipschitz.}

Let $\Lambda$ be the constant from
Proposition~\ref{proptruncation}.
For any $f$ as in the Theorem, there is
a $\Lambda$-Lipschitz map $\tilde f:U\to N$
satisfiying
\eqref{eqlipspprox2} and \eqref{eqlipspprox3}. One easily checks that it suffices to prove the assertion for $\tilde f$, which is denoted by $f$ in the following.

Therefore it suffices to prove the assertion under the additional assumption
\begin{equation}
 \text{  $\Lip(f)\le\Lambda$}.
\end{equation}

 \item \emph{Coverings.}
\label{itemchoicecovering}

For every point $y\in N$ we pick $r_y>0$ such that there is a chart $\psi_y: B_{r_y}^{(N)}(y)\to\R^n$. By compactness we can select finitely many points $Y:=\{y_1, \dots, y_K\}$ such that the smaller balls $\{B^{(N)}_{r_y/2}(y)\}_{y\in Y}$ cover $N$.
Let $r_0:= \min\{r_y: y\in Y\}$.

Let $\delta_N>0$ be such that there is a well-defined and smooth projection
$\pi:(\imath(N))_{\delta_N}\to \imath(N)$.
For any $y\in Y$ and $\qinM\in\overline U$ we apply
Proposition~\ref{prop:W2m_local},
with $\Lambda$  as fixed in Step (i),
 $\delta=\delta_N/2$,
$O'=B_{r_y/2}(y)$,
 $O=B_{r_y}(y)$, $\psi=\psi_y$.
We obtain that
there are open sets
$\omega'_{\qinM,y}$, $\omega_{\qinM,y}$
and constants $C_{\qinM,y}>0$ with the property stated.

The set \begin{equation}\label{eqdefomegaqinm}
\omega'_\qinM:=
B_{r_0/(2\Lambda)}^{(M)}(\qinM)\cap
\bigcap_{y\in Y} \omega'_{\qinM,y}
    \end{equation}
is a neighbourhood of $\qinM$ (in the topology of $M$).
Since these sets form an open cover of the compact set $\overline U$, there is a finite set $\QinM\subset\overline U$ such that
$\overline U\subset \cup_{\qinM\in \QinM} \omega'_\qinM$.
Let $C:=\max_{\qinM\in \QinM, y\in Y} C_{\qinM,y}$.

\item\emph{Construction of $\hat f$.}

Let $f:U\to N$ be $\Lambda$-Lipschitz.
For every $\qinM\in \QinM$, there is $y=y_\qinM\in Y$ such that
$f(\qinM)\in B_{r_y/2}(y)$.
By~\eqref{eqdefomegaqinm}, $f(\omega'_q\cap U)\subset B_{r_0/2}(f(q))
\subset O=B_{r_y}(y)$.
By
Proposition~\ref{prop:W2m_local} there is a function $\hat f_\qinM\in (C^1\cap W^{1,p}\cap W^{2,m})(\omega'_q\cap U;O)$ such that
\begin{eqnarray}
\label{eq:W1p_estimateimathcopy}
\| \imath\circ \hat f_\qinM - \imath\circ f\|_{W^{1,p}(\omega'_q\cap U)} &\le &C \| \dist(df, SO(M,N)) \|_{L^p(\omega_q\cap U)},   \\
\| d (\imath\circ \hat f_\qinM)\|_{L^\infty(\omega'_q\cap U)} &+& \| \nabla d (\imath\circ \hat f_\qinM)\|_{L^m(\omega'_q\cap U)}
 \le  C,  \label{eq:W2m_extrinsicimathcopy}
\\
\label{eq:propLinftyboundcopy}
\|\imath\circ \hat f_\qinM-\imath\circ f\|_{L^\infty(\omega'_q\cap U)} &\le &  \delta.
\end{eqnarray}
Since $\overline U$ is a compact set covered by the open sets $\omega'_\qinM$, there is a partition of unity $\theta_\qinM\in C^\infty_c(\omega'_\qinM;[0,1])$ with $\sum_{\qinM\in \QinM}\theta_\qinM=1$ on $\overline U$.
We remark that $\theta_q$ does not need to vanish on $\partial\omega_q'\cap\partial M$.
We define $\FPhi:U\to \R^d$ by
\begin{equation}
 \FPhi(u):=\sum_{\qinM\in \QinM} \theta_\qinM(u)\imath\circ \hat f_\qinM(u) .
\end{equation}
We next project to $\imath(N)$.
In order to use projection, we need to
show  that $\dist(\FPhi(u),\imath(N))<\delta_N$ uniformly, with $\delta_N$ as in Step \ref{itemchoicecovering}.
By \eqref{eq:propLinftyboundcopy},
\begin{equation}
|\imath\circ \hat f_q(u)-
\imath\circ  f(u)|\le
\delta \text{ for all $u\in \omega'_q\cap U$}
\end{equation}
and by convexity,
\begin{equation}\label{eqdistPhihatLinfty}
|\FPhi(u)-\imath\circ f(u)|\le
\sum_{\qinM:\theta_\qinM(u)>0} \theta_\qinM(u) |\imath\circ \hat f_\qinM(u)-\imath\circ  f(u)|\le \delta.
\end{equation}
In particular,
\begin{equation}
\dist(\FPhi(u),\imath(N))\le
|\FPhi(u)-\imath\circ f(u)|\le \delta.
\end{equation}
At this point we can define $\hat f:=\imath^{-1}\circ\pi\circ \FPhi$.

\item \emph{Bounds on $\hat f$.}

Let $\Phi:=\imath\circ f$. We compute, using \eqref{eq:W1p_estimateimathcopy},
\begin{equation}\label{eqphihatphiLp}
\begin{split}
 \|\Phi-\hat\Phi\|_{L^p(U)}
 &=\|\sum_q \theta_q (\imath\circ f-\imath\circ  \hat f_q)\|_{L^p(U)}\\
&\le  \sum_q  \|\imath\circ f-\imath\circ  \hat f_q\|_{L^p(\omega_q'\cap U)}\le
 C\eps,\end{split}
\end{equation}
where
\begin{equation}
 \eps:=\|\dist(df, SO(M,N))\|_{L^p(U)}.
\end{equation}
Similarly,
\begin{equation}\label{eqdphihatdphiLp}\begin{split}
 \|d \Phi- d\hat\Phi\|_{L^p(U)}
 &=\|
   \sum_q d (\theta_q\Phi)
-\sum_q d (\theta_q \imath\circ \hat f_q)\|_{L^p(U)}\\
&=\|\sum_q d [\theta_q (\imath\circ f-\imath\circ  \hat f_q)]\|_{L^p(U)}\\
&\le \|\sum_q |d \theta_q| \, |\imath\circ f-\imath\circ  \hat f_q|\|_{L^p(U)}
+\|\sum_q \theta_q d [(\imath\circ f-\imath\circ  \hat f_q)]\|_{L^p(U)}\\
&\le
 C \sum_q \|  \imath\circ f-\imath\circ  \hat f_q\|_{W^{1,p}(\omega'_q\cap U)}\le C\eps.
\end{split}
\end{equation}
The same computation also gives
\begin{equation}\begin{split}
\label{eqlinftyhatphi}
\|d\hat\Phi\|_{L^\infty(U)}
 &=\|
   \sum_q d (\theta_q \imath\circ \hat f_q)\|_{L^\infty(U)}\\
&   \le C \|\imath\circ \hat f_q\|_{L^\infty(U)}
   + C \|d(\imath\circ \hat f_q)\|_{L^\infty(U)}.
\end{split}
\end{equation}
Since $\imath(N)$ is bounded, with \eqref{eq:W2m_extrinsicimathcopy}
we obtain $ \|d\hat\Phi\|_{L^\infty(U)}\le C$.

Analogously,
\begin{equation}\begin{split}
\|\nabla d \hat\Phi\|_{L^m(U)}
 &=\|\sum_q
   \nabla d (\theta_q \imath\circ \hat f_q)\|_{L^m(U)}\\
&\le \sum_q \|
    \nabla d (\theta_q \imath\circ \hat f_q)\|_{L^m(U\cap \omega'_q)}.
\end{split}
\end{equation}
As above, for $\theta:U\to[0,\infty)$ and $\phi:\supp\theta\cap U\to\R$
(which will then be a component of $\imath\circ \hat f_q$)
one obtains, using the Leibniz rule for covariant derivatives,
\begin{equation}
| \nabla d(\theta\phi)|\le
|\nabla d\theta| \,|\phi| +
2 |d\theta|\, |d\phi|+
\theta|\nabla d\phi|.
\end{equation}
Using the properties of $\theta_q$,
and the bounds on $\hat f_q$ and $d\hat f_q$,
\begin{equation}
| \nabla d(\theta_q \imath\circ \hat f_q)|\le
C +
C|\nabla d(\imath\circ \hat f_q)|.
\end{equation}
The $L^m(U\cap \omega'_q)$ norm of the last term was estimated in
\eqref{eq:W2m_extrinsicimathcopy}. Therefore
\begin{equation}\label{eqD2hatPhi}
\begin{split}
\|\nabla d \hat\Phi\|_{L^m(U)}
 &\le C.
\end{split}
\end{equation}

Further, by \eqref{eqdistPhihatLinfty} $\|\Phi-\hat\Phi\|_{L^\infty}\le \delta$.

It remains to show that the projection $\pi$ does not change the estimates.

In order to prove \eqref{eq:mainth:W1p_estimate},
we recall that $\imath\circ \hat f=\pi\circ\hat\Phi$
and $\imath\circ f=\Phi$.
Since $\pi\circ\Phi=\Phi$,
\begin{equation}
d (\pi\circ\hat\Phi)-d\Phi=
 d\pi(\hat\Phi)(d\hat\Phi)-
 d \pi(\Phi)(d\Phi).
\end{equation}
From the fact that $d\pi$ is a smooth function we obtain
\begin{equation}
 |d\pi(\hat\Phi)-d\pi(\Phi)|\le
 C |\hat\Phi-\Phi|.
\end{equation}
Recalling that $|d\Phi|+|d\hat\Phi|\le C$,
\begin{equation}
| d(\pi\circ\hat\Phi)-d\Phi|
\le | d\pi(\Phi)(d\hat\Phi-d\Phi)|+
C |\hat\Phi-\Phi|.
\end{equation}
Since $\pi$ is a projection, $|d\pi(\Phi)|= \sqrt n$. Using
\eqref{eqphihatphiLp} and
\eqref{eqdphihatdphiLp},
\begin{equation}
\| d (\pi\circ\hat\Phi)-d\Phi\|_p
\le C\|d\hat\Phi-d\Phi\|_p+
C \|\hat\Phi-\Phi\|_p\le C\eps
\end{equation}
which concludes the proof of \eqref{eq:mainth:W1p_estimate}.

Similarly, $|d(\imath\circ \hat f)|=|d(\pi\circ \hat\Phi)|\le C |d\hat\Phi|$ and \eqref{eqlinftyhatphi} prove the first bound in \eqref{eq:mainth:W2m_extrinsic}.

It remains to estimate $\|\nabla d (\imath\circ \hat f)\|_{L^m(U)}=\|\nabla d (\pi\circ \hat \Phi)\|_{L^m(U)}$. We recall that
\begin{equation}
\begin{split}
d(\pi\circ\hat\Phi)
 &=(d\pi)(\hat\Phi) d\hat\Phi
 : U \to \Lin(TM, \R^d),
\end{split}
\end{equation}
 each component is a map from $U$ to $T^*M$.
Since $\pi$ is a smooth map from a subset of $\R^d$ to $\R^d$, and $\hat\Phi$ maps $U\subset M$ into
$(\imath(N))_\delta\subset\R^d$,
one easily sees that
(for example taking components and working in local coordinates)
\begin{equation}\label{eqnabladpihatphi}
| \nabla d (\pi\circ \hat\Phi)|\le
 |D^2\pi| |d\hat\Phi|^2
 + |D\pi| |\nabla d\hat\Phi|.
\end{equation}
We recall that $|d\hat\Phi|_{L^\infty}\le C$, that all derivatives of $\pi$ are bounded, so that
\begin{equation}
\| \nabla d (\pi\circ \hat\Phi)\|_{L^m(U)}\le
 C + C \|\nabla d\hat\Phi\|_{L^m(U)}.
\end{equation}
We use \eqref{eqD2hatPhi} and conclude.
\end{enumerate}
\end{proof}

\newcommand{\gdOmega}[1]{\Omega(#1)}
\newcommand{\gdAverage}[3]{(#1)_{#2,#3}}
\def\gdhatF{\widetilde F}
\def\gdhatf{\widetilde f}
\def\Omegaast{\gdOmega{R_\ast}}
\def\Rconcat{{R_{N}}}     \def\Rlarge{{R_\mathrm{chart}}} \def\Rexistence{{R_\mathrm{exist}}}   \def\RextensionG{{R_\mathrm{extg}}}   \def\RFixedpoint{{\tilde R_\mathrm{fxpt}}}        \newcommand{\RFixedpointr}{r}        
\def\RFixedpointnew{{R_\mathrm{fxpt}}}        \def\Restimate{{R_\mathrm{estim}}}         \newcommand{\epsilonloc}[1]{\varepsilon_0}
\newcommand\deltaN{\delta_N}

\begin{proof}[Proof of Proposition~\ref{prop:W2m_local}]
We fix an open set $\omega\subset M$ with $q\in\omega$
and a chart $\varphi:\omega\to\R^n$
with $\varphi(q)=0$.
Possibly making $m$ larger, we can assume that
\begin{equation}\label{eqboundsmp}
m>2n, \qquad m\ge p, \qquad  \frac1m+\frac1p<1.
\end{equation}
In this proof, the average of a function $F$ on a ball $B(y,r)$ is denoted by $\gdAverage{F}{y}{r}$.
Let
\begin{equation}\label{eqpropdefeps}
 \epsilonloc{\omega\cap U}
 :=  \|\dist(df,SO(M,N))\|_{L^p(\omega\cap U)}.
\end{equation}

If $q\in U$ we fix $\Rlarge>0$ with $B_{\Rlarge}^{(\R^n)}(0)\subset\varphi(\omega\cap U)$ and we set $\gdOmega{R}=B_R^{(\R^n)}(0)$, $R\in (0,\Rlarge]$. If $q\in\partial U$, recalling \eqref{eqdeflipschitzboundary}, there exists a function $\gamma\in\Lip(\R^{n-1})$ with $\gamma(0)=0$ and an $\Rlarge>0$ such that
\begin{equation}
 \gdOmega{R}:=
 \varphi(\omega\cap U)\cap B_R^{(\R^n)}(0)=
 \{x\in B_R^{(\R^n)}(0): x_n< \gamma(x')\}, \quad R\in (0,\Rlarge].
\end{equation}

The assertion follows from the following local approximation result in $\R^n$.
There exists $\deltaN>0$, $\Restimate>0$ and $C>0$ (which depend on $q$, $O$, $O'$, $\varphi$, $\psi$) with the following property: For $\delta\in (0,\deltaN]$ there exists an  
$R_\ast\in (0,\Restimate]$ such that for all $f$ as in the statement of the proposition, and  $F:=\psi \circ f \circ \phi^{-1}:\gdOmega{R_\ast}\to \R^n$ there exists a function $\hat F:\gdOmega{R_\ast}\to \R^n$ with
\begin{eqnarray}
\| \hat F - F\|_{W^{1,p}(\Omegaast)} &\le &C  \epsilonloc{\omega\cap U}  ,  \label{eq:W1p_estimate}
\\
 \|\hat F-F\|_{L^\infty(\Omegaast)} &\le &  \delta,
\label{eq:Linfty_extrinsic}
\end{eqnarray}
and
\begin{subequations}\label{eq:W2m_extrinsic}
 \begin{align}
\| \Enabla \hat F\|_{L^\infty(\Omegaast)} & \leq C, \label{eq:W2m_extrinsica} \\
\| \Enabla^2\hat F\|_{L^m(\Omegaast)}
& \le C. \label{eq:W2m_extrinsicb}
 \end{align}
\end{subequations}
The parameter $\deltaN$ is defined in Step~1, 
the radius $\Restimate$ at the end of Step~4 below and the constant $C$ is uniform in $f$ and $R_\ast$.

In Steps~1-3 we construct a function $\gdhatF$ which satisfies these estimates with the exception of the $W^{2,m}$ estimate~\eqref{eq:W2m_extrinsicb}, in Step~4 we show how to obtain $\hat F$ from $\gdhatF$, and we conclude the proof of the proposition.

\bigskip

\textit{Step 1:
There exist $\deltaN>0$, $\Rconcat>0$ and an open set
$B^N\subset\subset\psi(O)$
(which depend on $q$, $O$, $O'$, $\Lambda$, $\varphi$, $\psi$) such
that for any
 $Z\in L^\infty(\gdOmega{\Rlarge};\R^n)$ with
$\| Z \|_\infty \leq \deltaN$
one has
$(F+Z)(\Omega(\Rconcat))\subset B^N$.
}

We first fix $\deltaN$. The choice of $\deltaN$ needs to guarantee that for $Z\in L^\infty(\gdOmega{\Rlarge};\R^n)$ with
$\| Z \|_\infty \leq \deltaN$ the function $\gdhatF = F + Z$ has values in the range of $\psi$ on $O$ and that the function $\gdhatf = \psi^{-1}\circ \gdhatF\circ \phi:\phi^{-1}(\gdOmega{\Rlarge})\subset U\to N$ is well-defined. Since $\Lip(f)\le \Lambda$ on $\omega\cap\overline U$, we have $\Lip(F)\le C\Lambda$ on $\Omega(\Rlarge)$, so that \eqref{eq:Linfty_extrinsic} implies
$\gdhatF(z)\in  B_{\delta + C \Lambda \Rlarge}(F(0))$ on $\Omega(\Rlarge)$. Since $\psi^{-1}(F(0))=f(q)\in O'\subset\subset O$ we can
choose $\deltaN$ small enough such that
$$B^N:=
\overline B_{(1+C\Lambda )\deltaN}(F(0))\subset\subset\psi(O).$$
Set $\Rconcat:=\min\{\deltaN,\Rlarge\}$. With this choice, $\gdhatF(\gdOmega{\Rconcat})\subset B^N\subset \psi(O)$.

In the next steps 
we shall prove the approximation result for $\Restimate\in (0,\Rconcat]$ small enough. The $W^{1,p}$ estimate~\eqref{eq:W1p_estimate} and the $L^\infty$ estimate~\eqref{eq:Linfty_extrinsic} for the approximation error as well as the Lipschitz bound \eqref{eq:W2m_extrinsica} follow from Step~2, the regularity statement \eqref{eq:W2m_extrinsicb} for the approximation from Steps~3 and~4. 

\bigskip

\textit{Step~2:
There exists an $\Rexistence\in (0,\Rconcat]$ such that, for any $F:\gdOmega{\Rexistence}\to\R^n$ as in Step~1, there exists an $\gdhatF\in W^{1,\infty}(\gdOmega{\Rexistence}; \R^n)$ which is the coordinate representation of a harmonic map $\gdhatf:\phi^{-1}(\gdOmega{\Rexistence})\to N$, obeys the
$W^{1,p}$ estimate~\eqref{eq:W1p_estimate} and the $L^\infty$ estimate~\eqref{eq:Linfty_extrinsic} for the approximation error, and
the Lipschitz bound in~\eqref{eq:W2m_extrinsica}.}

The key idea for this construction is that the
function $f$ obeys the weak Piola identity
 (see \Cref{prop:weakpiola} for the extrinsic version,
 \eqref{eqdeltagprelim1} for the intrinsic one)
 with a right-hand side whose $L^p$ norm is controlled by $C\epsilonloc{}$.
 We shall use this in coordinates.
Precisely,
by Proposition~\ref{proppiolacoordinates}
(or by~\eqref{eqdeltagprelim1})
the function
$F:=\psi\circ f\circ\varphi^{-1}:\gdOmega{\Rconcat}
\to\R^n$  obeys
\begin{equation}\label{eqLgFk}
-L_gF_k(x)
 =\hat \Gamma_k(x,F(x),\Enabla F(x))+\partial_i  H_{ki}(x)+ H_k'(x),
\end{equation}
for $k=1, \dots, n$.
Here $\hat \Gamma(\cdot, \cdot, \cdot)\in \R^n$ is the coordinate representation of the quadratic term in the harmonic map equation ,
which is quadratic in the last argument, in the sense that
(see
Proposition~\ref{proppiolacoordinates})
\begin{equation}\label{eqhatGammaQ}
 \hat\Gamma_k(x,a,A)=Q_k(x,a)(A,A),
 \hskip5mm
 k\in\{1,\dots, n\},\,
 a\in \psi(O)\in \R^n,\, A\subset\R^{n\times n},
\end{equation}
and $L_g$ is the Laplace-Beltrami operator 
acting in the sense of distributions on Sobolev functions $u:M\to \R$,
\begin{equation}\label{eqdefLgfromg}
 L_g u(x):=
\frac1{\sqrt {\det g(x)}}\partial_i (\sqrt{\det g(x)} g^{ij}(x) \partial_j  u(x))
\end{equation}
with 
 $g(\cdot)\in\R^{n\times n}$ the representation of the metric of $M$,
see~\eqref{eqdefLgfromgbb}.
The right-hand side obeys the estimate
\begin{equation}\label{eqHheps}
 \|H\|_{L^p(\gdOmega{\Rconcat})} +
 \|H'\|_{L^p(\gdOmega{\Rconcat})} \le C \epsilonloc{\omega\cap U}.
\end{equation}
We look for a radius $\Rexistence\in (0,\Rconcat]$ (independent of $f$) and a map
$\gdhatF:\gdOmega{\Rexistence}\to\R^n$ such that
\begin{equation}\label{eqhatfharmonicmap}
-L_g \gdhatF_k(x) =
\hat \Gamma_k(x, \gdhatF(x), \Enabla \gdhatF(x))\quad \text{ in }\Omega(\Rexistence),
\end{equation}
with estimates which are in particular uniform in the local Lipschitz constants of $\partial U$. Therefore we seek $\gdhatF$ as a correction of $F$ via $\gdhatF = F + Z$ where $Z$ is a solution of a related equation on $B_{\Rexistence}$. If $q\in \partial M$, this approach requires in particular an extension of the metric $g$.

In this case, one first extends the metric to a smooth map defined on $B_{\RextensionG}$ with $\RextensionG\in (0,\Rconcat]$ which fulfills the bounds and the ellipticity conditions in~\eqref{eqgC0} (possibly increasing $C_0$). This gives an extension of the operator $L_g$ which depends on the metric. If $q\in \partial U\setminus \partial M$ or if $q\in U$, choose $\RextensionG$ with $B(q,\RextensionG)\subset M$.
We set $Q(x,\cdot)=0$ for $x\in B_{\RextensionG}\setminus \gdOmega{\RextensionG}$, and obtain that $Q$ satisfies
\begin{align}\label{eqboundsQ}
\begin{aligned}
\|Q\|_{L^\infty(B_\RextensionG\times \psi(O))} & \le C, \\  \|\Enabla_x Q\|_{L^\infty(\Omega(\RextensionG)\times \psi(O))}+
\|\Enabla_a Q\|_{L^\infty(B_{\RextensionG}\times \psi(O))} & \le C.
 \end{aligned}
\end{align}
Finally we extend $H$ and $H'$ by zero to all of $B_{\RextensionG}$.

In order to obtain the equation for $Z$ on $B_{\Rexistence}$, $\Rexistence\in (0,\RextensionG]$, we take
the difference  between \eqref{eqhatfharmonicmap} and  \eqref{eqLgFk}, and obtain on $B_{\RextensionG}$
\begin{equation}\label{eqforZGamma}
-L_g Z_k(x) =
\hat \Gamma_k(x, \gdhatF(x), \Enabla \gdhatF(x))
-\hat \Gamma_k(x,F(x),\Enabla F(x))-\partial_i  H_{ki}(x)- H_k'(x).
\end{equation}
The right hand side
$ \Xi(x,a,A):B_{\RextensionG}\times \psi(O) \times \R^{n\times n}$ is then given by

\begin{equation}\label{eqdefXi}
\begin{split}
 \Xi(x,a,A):=&\hat\Gamma(x,F(x)+a, \Enabla F(x)+A)-\hat\Gamma(x,F(x), \Enabla F(x))\\
 =& Q(x,F(x)+a)(A,A)
 + 2 Q(x,F(x)+a)(A,\Enabla F(x))\\
 &
 + [ Q(x,F(x)+a)-Q(x,F(x))](\Enabla F(x),\Enabla F(x)).
\end{split}\end{equation}

At this point we use the
fixed-point argument in Proposition~\ref{prop:harmonic_replacement_interior}  below. We start by checking the assumptions.
Recalling that $|\Enabla F|\le C\Lambda$ on $\Omega(\RextensionG)$,
a simple computation shows that
\eqref{eqboundsQ} and \eqref{eqdefXi} imply \eqref{eqHaAbBlip} on $B_{\RextensionG}\times \psi(O)\times \R^{n\times n}$, with $C_0$ depending on $\Lambda$.
By~\eqref{eqhhprimeprelim2} and regularity of the coordinate representation,
\begin{equation}\label{eqhhprimeprelim3b}
 \|H\|_{L^\infty(\omega\cap U)} +
 \|H'\|_{L^\infty(\omega\cap U)} \le C_\Lambda,
\end{equation}
so that we can choose $\Lambda_*>0$ such that \eqref{eqassHHstrich} is satisfied for all choices of $F$, $H$, $H'$, and all $r\in(0,\RextensionG]$. We then choose $r_0\in (0,\RextensionG]$ such that $r_0\Lambda_*\subset B^N$, and all assumptions of 
Proposition~\ref{prop:harmonic_replacement_interior} are satisfied.

By  Proposition~\ref{prop:harmonic_replacement_interior} there exists an $\RFixedpoint\in (0,\RextensionG ]$ such that
for any $\RFixedpointr\in (0, \RFixedpoint]$
 equation \eqref{eqforZGamma} has
a unique solution $Z\in W^{1,m}_0(B_{\RFixedpointr};\R^n)\cap \{\|\Enabla Z\|_{L^m}\le 1\}$. By the above argument, $\gdhatF:=Z+F:\gdOmega{\RFixedpointr}\to \R^n$ obeys the harmonic map equation \eqref{eqhatfharmonicmap} in $\gdOmega{\RFixedpointr}$; from \eqref{eqHheps}, \eqref{eqLpboundnablaZh} with $\ell=p$, and Poincar\'e one obtains the $W^{1,p}$ estimate~\eqref{eq:W1p_estimate} for the approximation error $Z=F-\gdhatF$,
\begin{equation}\label{eqDFDhatFclose}
\frac{1}{\RFixedpointr} \|F-\gdhatF\|_{L^{p}(\gdOmega{\RFixedpointr})}+
 \|\Enabla F-\Enabla \gdhatF\|_{L^{p}(\gdOmega{\RFixedpointr})}
 \le C \epsilonloc{\omega\cap U}.
\end{equation}
Further, by \eqref{eqlpboundlinfty} $\Lip(Z;B_{\RFixedpointr/2})\le C\Lambda_*$, therefore~\eqref{eq:W2m_extrinsica} holds,
\begin{equation}\label{eqhatFlip}
 \Lip(\gdhatF,\Omega\cap B_{\RFixedpointr/2})\le  C\Lambda_*.
\end{equation}
Finally, \eqref{eqZLinftyE} guarantees the $L^{\infty}$ estimate~\eqref{eq:Linfty_extrinsic} for the approximation error $Z=F-\gdhatF$ if we choose $\RFixedpointnew\in (0,\RFixedpoint]$ small enough.

This defines $\Rexistence := \RFixedpointnew/2$ and $\gdhatF$ in the assertion of this step.

\bigskip

\textit{Step~3: Interior estimates. 
$\gdhatF\in W^{2,1}_\loc(\gdOmega{\Rexistence})$ and
there exists a constant $C_s>0$ such that for all $B_{\rho}(y)\subset \gdOmega{\Rexistence}$ and all $s\in (1,\infty)$,
 \begin{equation}
\label{eqnablathatfbef}
 \| \Enabla^2\gdhatF \|_{L^\infty(B_{\rho/2}(y))}   \le \frac{C_s}{\rho}\rho^{-n/s}
 \|\Enabla \gdhatF-\gdAverage{\Enabla \gdhatF}{y}{\rho}\|_{L^s(B_{\rho}(y))}+C(s,\Lambda_*).
\end{equation}
}

We apply the elliptic regularity statements contained in  Section~\ref{sec:elliptic}, see in particular Lemma~\ref{le:app_W2m_in_unit_ball} with $b=0$
and Lemma~\ref{eq:app_second_derivative_oscillation}, to obtain a pointwise estimate for $\Enabla^2 \gdhatF$.

To be precise, let $B_{\Rexistence}$ be the ball obtained in Step~2 and $B_\rho(y)\subset B_{\Rexistence}$.
To prove~\eqref{eqnablathatfbef},
we apply Lemma~\ref{eq:app_second_derivative_oscillation} to the function
$u:=\gdhatF-\gdhatF(y):B_{\rho}(y)\to\R^n$.
Equation~\eqref{eq:A_system_quadratic_growth} with
$a^{\alpha\beta}:=\sqrt{\det g}g^{\alpha\beta}$ and, in the notation of \eqref{eqhatGammaQ},
\begin{equation}
G^k(x,a,A):=\sqrt{\det g(x)} Q_k(x,\gdhatF(y)+a)(A,A)
\end{equation}
follows from~\eqref{eqhatfharmonicmap}
(see~\eqref{eqLgufinalpart1}-\eqref{eqLgufinalpart3} below for a similar computation).
Conditions
 \eqref{eq:app_ellipticity_a}-\eqref{eq:app_C2_a}
 follow from the regularity of $g$, with bounds that do not depend on $x$ and $\Rexistence$.
It remains to check
\eqref{eq:app_growth_G}-\eqref{eq:app_DAG}.
We first take a cutoff function $\eta\in C^\infty_c(\psi(O);[0,1])$ with $\eta=1$ on $B^N$,
and replace $Q(x,a)$ by $\eta(a)Q(x,a)$, which is defined on $B_\Rlarge\times \R^n$.
Since the quadratic form $Q$ is bounded, \eqref{eq:app_growth_G} and \eqref{eq:app_DAG} are immediate.
Conditions~\eqref{eq:app_DxG} and~\eqref{eq:app_DzG} follow from the last of~\eqref{eqboundsQ}.

Recalling~\eqref{eqhatFlip}, conditions~\eqref{eq:app_smallness_w1infty} and~\eqref{eq:u0in0} are immediate, with $\Lambda':=C\Rexistence\Lambda_*$.
We set $A_0:=\gdAverage{D\gdhatF}{y}{\rho}$, so that
\eqref{eqbda0rlstr} follows immediately from
\eqref{eq:app_smallness_w1infty}. Then  \eqref{eq:app_interior_C2_normal}
implies~\eqref{eqnablathatfbef}.

\bigskip

\textit{Step~4: Existence of $\Restimate$, $\hat F$, and proof of the global estimates \eqref{eq:W1p_estimate}, \eqref{eq:Linfty_extrinsic}, and~\eqref{eq:W2m_extrinsic}. }

We need to convert the local estimate for $\gdhatF$ in Step~3 into a global one of the form $ |D^2\gdhatF| \le A/\dist(\cdot,\partial\gdOmega{\Rexistence}) + B$ so that we can use the weighted estimates in Section~\ref{se:smith}. By construction, the representations of the metrics $g$ and $g_N$ and their inverses $g^{-1}$ and $g_N^{-1}$ are bounded on $\gdOmega{\Rexistence}$ and $\psi(O)$, respectively, by a constant $M$. The Euclidean rigidity estimate in Lemma~\ref{lemmaFJMwithprefactor} below implies for $X_0$, $Y_0\in \R^{n\times n}$ with $|X_0|$, $|Y_0|\leq M$ that
\begin{equation}\label{eqFJMDhatFgddoppel}
 \|D\gdhatF-\gdAverage{D\gdhatF}{y_0}{\rho}\|_{L^s(B_\rho(y_0))}\le C(M)\|\dist(X_0\, D\gdhatF\,Y_0, SO(n))\|_{L^s(B_{\rho}(y))}.
\end{equation}
To estimate the right hand side, we relate the integrand to $\dist(df,SO(M,N))$.
By the definition in \eqref{eq:defSOMN}, we have
\begin{equation}\label{eqdistDFdfSOn}
 \dist(g_N^{1/2}(F(x)) D F(x) g^{-1/2}(x), SO(n))\le C
  \dist(df\circ \varphi^{-1}(x), SO(M,N)).
\end{equation}
We define $X$, $Y:\gdOmega{\Rexistence}\to\R^{n\times n}_+$ by  $X:=g_N^{1/2}\circ F$ and $Y:=g^{-1/2}$. Since $F$ is Lipschitz and the metrics are smooth, $X$ and $Y$ are Lipschitz with uniform bound $C$. The definition of $\epsilonloc{\omega\cap U}$ in  \eqref{eqpropdefeps} and the estimate \eqref{eqDFDhatFclose} allow us to replace $F$ by $\gdhatF$ and to obtain that
\begin{equation}\label{eqdistDhatFSOn}
\Fdist(x):=\dist(X(x) (D\gdhatF(x)) Y(x),SO(n))
\text{ obeys }
\|\Fdist\|_{L^p(\gdOmega{\Rexistence})}\le C \epsilonloc{\omega\cap U}.
\end{equation}
For any ball $B_\rho(y_0)\subset\gdOmega{\Rexistence}$ we find with $X_0=X(y_0)$, $Y_0 = Y(y_0)$ that $|X-X(y_0)|+|Y-Y(y_0)|\le C \rho$ on $B_\rho(y_0)$. Therefore, for any $s\in (1,\infty)$, one has
\begin{equation}
 \|\dist( X(y_0) (D\gdhatF) Y(y_0),SO(n))\|_{L^s(B_\rho(y_0))} \le
 \|\Fdist\|_{L^s(B_\rho(y_0))} + C \rho^{1+n/s}.
\end{equation}
Together with \eqref{eqFJMDhatFgddoppel} we conclude
\begin{equation}\label{eqFJMDhatF}
 \|D\gdhatF-\gdAverage{D\gdhatF}{y_0}{\rho}\|_{L^s(B_\rho(y_0))} \le
C \|\Fdist\|_{L^s(B_\rho(y_0))} + C \rho^{1+n/s}.
\end{equation}

We finally obtain from the local estimate a global one, using the singular integral estimates presented in Section~\ref{se:smith}.
The key point is the $W^{2,m}$ estimate.

\newcommand{\oldqexponent}{s}
Fix an exponent $\oldqexponent\in(1,p)$, and consider the $\oldqexponent$-maximal function of $\Fdist$, setting for $x\in\R^n$
\begin{equation}
 \Fdistmax(x):= \sup_{\rho>0}\left[ \frac{1}{\rho^n}
 \int_{B_\rho(x)} \Fdist^\oldqexponent\chi_{\gdOmega{\Rexistence}} dy \right]^{1/\oldqexponent} .
\end{equation}
Since $\oldqexponent<p$, by singular-integral estimates
\cite[Th. 1(c), page 5]{Stein1970}
in $L^{p/\oldqexponent}(\R^n)$ applied to the function
$P^\oldqexponent\chi_{\gdOmega{\Rexistence}}$
and its maximal function
$Q^\oldqexponent$,
we obtain
\begin{equation}
\| \Fdistmax\|^\oldqexponent_{L^{p}(\R^n)}=
 \| \Fdistmax^\oldqexponent\|_{L^{p/\oldqexponent}(\R^n)}
\le C \|\Fdist^\oldqexponent\chi_{\gdOmega{\Rexistence}}\|_{L^{p/\oldqexponent}(\R^n)}=  C \|\Fdist\|_{L^{p}(\gdOmega{\Rexistence})}^\oldqexponent,
\end{equation}
so that
with the $L^p$ bound for $P$ in \eqref{eqdistDhatFSOn} one obtains
\begin{equation}
 \| \Fdistmax\|_{L^{p}(\gdOmega{\Rexistence})}\le C\epsilonloc{\omega\cap U}.
\end{equation}
For any $y\in \gdOmega{\Rexistence/2}$, let $\rho=\rho_y:=\dist(y,\partial\gdOmega{\Rexistence/2})$, and  consider the ball
$B=B_\rho(y)$. Using
\eqref{eqFJMDhatF}
and then the definition of $\Fdistmax$,
\begin{equation}
 \| \Enabla \gdhatF - \gdAverage{\Enabla \gdhatF}{y}{\rho} \|_{L^\oldqexponent(B_{\rho}(y))}\le
 C \| \Fdist\|_{L^\oldqexponent(B)}+ C \rho^{1+n/\oldqexponent}
 \le C\rho^{n/\oldqexponent} \Fdistmax(y)+ C \rho^{1+n/\oldqexponent}.
\end{equation}
Therefore the interior estiamte \eqref{eqnablathatfbef} implies
\begin{equation}\label{eqenabla2ab}
 |\Enabla^2\gdhatF|(y)
 \le \frac{C}{\dist(y,\partial\gdOmega{\Rexistence/2})}\Fdistmax(y) + C_\Lambda.
\end{equation}
We are now almost ready to apply
Proposition~\ref{propLipschitzboundarylocal} to each component of the function $\gdhatF$, with $B=C_\Lambda$ and $A=CQ$. We remark that \eqref{eqenabla2ab}
holds in $\gdOmega{\Rexistence/2}$, and in $\gdOmega{\Rexistence/4}$ we have, in the notation of~\eqref{eqdeflipschitzboundary} (which is essentially the same as~\eqref{eqdefomegargamma}),
$\gamma(y')-y_n \le C \dist(y,\partial(\gdOmega{\Rexistence/2})$.
We know that 
$\gdhatF\in W^{1,\infty}(\gdOmega{\Rexistence/2})\cap W^{2,1}_\mathrm{loc}(\gdOmega{\Rexistence/2})$,
\begin{equation}
 \|\gdhatF\|_{W^{1,m}}
 +\|A\|_{L^m}
 +\|B\|_{L^m}\le C_1,\quad
 \|A\|_{L^p}\le C\eps.
\end{equation}
By
Proposition~\ref{propLipschitzboundarylocal}
there are $\gdhatF_h$, $\gdhatF_l:\Omega'\to\R$ such that $\gdhatF=\gdhatF_h+\gdhatF_l$,
\begin{equation}\label{eqresultprop62}
 \|\gdhatF_h\|_{W^{1,p}(\Omega')}\le C \epsilonloc{\omega\cap U},\quad
 \|\gdhatF_h\|_{W^{1,m}(\Omega')}+
 \|\gdhatF_l\|_{W^{2,m}(\Omega')}\le CC_1.
\end{equation}
Therefore $\Restimate := \Rexistence/4$ and $\hat F = \gdhatF_l$  have
properties \eqref{eq:W1p_estimate}-\eqref{eq:W2m_extrinsic}, and the proof is concluded.
In particular,~\eqref{eq:W2m_extrinsica} follows from 
the last of~\eqref{eqresultprop62} and $m>n$. Further,
from~\eqref{eqDFDhatFclose} and~\eqref{eqresultprop62}
we obtain $
\|F-\hat F\|_{W^{1,p}(\Omega')}\le C\eps$, and since both functions are bounded in $W^{1,\infty}$ 
we conclude $
\|F-\hat F\|_{L^\infty}\le C\eps^\gamma$ for some $\gamma=\gamma(p,n)>0$, so that
~\eqref{eq:Linfty_extrinsic} follows for $\eps$ sufficiently small.
If instead $C\eps^\gamma>\delta$, then setting $\hat F=F(0)$ will do.

We then define $\omega': = \phi^{-1}(\gdOmega{\Restimate})$ and $\gdhatf := \psi^{-1}\circ \gdhatF \circ \phi$. Since the charts are smooth, $\gdhatf$ is as regular as $\gdhatF$.
The estimates
\eqref{eq:W1p_estimateimath}, \eqref{eq:W2m_extrinsicimath} and
\eqref{eq:Linfty_extrinsicimath}
in the statement follow from
\eqref{eq:W1p_estimate}, \eqref{eq:W2m_extrinsic} and
\eqref{eq:Linfty_extrinsic},
and the fact that $\imath\circ\psi^{-1}\in C^\infty(\psi(O);\R^d)$,
with uniform bounds on the compact set $\overline {B^N}$.
\end{proof}

We next prove the Lemma on rigidity with approximately constant metric that was used to obtain \eqref{eqFJMDhatF} above.
\begin{lemma}\label{lemmaFJMwithprefactor}
For any $\kappa\ge1$, $s\in (1,\infty)$, $n\ge 2$ there is $C=C(s,\kappa,n)>0$ such that the following holds:

For any matrices $X$, $Y\in\R^{n\times n}_+$ with $|X|+|X^{-1}|+|Y|+|Y^{-1}|\le \kappa$, any ball $B\subset \R^n$ and any $v\in W^{1,s}(B;\R^n)$ one has
 \begin{equation}\label{eqFJMwithXY}
  \|Dv-(Dv)\|_{L^s(B)}
  \le C\|\dist(X\, Dv\,Y, SO(n))\|_{L^s(B)}.
 \end{equation}
 \end{lemma}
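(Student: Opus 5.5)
The plan is to reduce to the classical Euclidean rigidity estimate \eqref{eq:euclidean_rigidity} in its $L^s$ version (Friesecke–James–Müller, valid for all $s\in(1,\infty)$ on balls with a constant depending only on $n$ and $s$, by scaling invariance), applied to a linearly transformed map on an ellipsoid. The key observation is that $X\,Dv\,Y$ is itself a gradient: set $w(z):=X v(Yz)$, defined on $E:=Y^{-1}B$. By the chain rule, $Dw(z)=X\,(Dv)(Yz)\,Y$. Thus
\begin{equation*}
\|\dist(X\,Dv\,Y,SO(n))\|_{L^s(B)}^s=\det Y\,\|\dist(Dw,SO(n))\|_{L^s(E)}^s ,
\end{equation*}
and $\det Y$ lies between two constants depending only on $\kappa$ and $n$.

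The next step is to apply the $L^s$ rigidity estimate on the ellipsoid $E$. After scaling and translation, $E$ is the image of the unit ball under a linear map $Y^{-1}$ with $|Y|+|Y^{-1}|\le\kappa$. It is therefore a bi-Lipschitz image of the unit ball with controlled constants, and so belongs to a compact family of Lipschitz domains. The rigidity constant is uniform over this family; alternatively, one can argue directly by a covering/chain-of-balls argument, since $E$ contains and is contained in balls of comparable radii. In either case there is $R\in SO(n)$ with
\begin{equation*}
\|Dw-R\|_{L^s(E)}\le C(s,\kappa,n)\,\|\dist(Dw,SO(n))\|_{L^s(E)} .
\end{equation*}
Uniformity of this constant is the only step where care is needed, and I would handle it by the scaling plus bi-Lipschitz-to-ball argument. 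The cleanest form of that argument is the following: the rigidity inequality for a domain follows from the one for the ball by writing $E=Y^{-1}B'$ and noting that the FJM constant for $Y^{-1}B'$ depends only on the Lipschitz character of the domain, which is controlled by $\kappa$.

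Finally, I transform back. Set $A:=X^{-1}RY^{-1}$. Then $Dv-A=X^{-1}\bigl(X\,Dv\,Y-R\bigr)Y^{-1}$, evaluated at the corresponding points. Changing variables again gives
\begin{equation*}
\|Dv-A\|_{L^s(B)}\le \kappa^2(\det Y)^{1/s}\|Dw-R\|_{L^s(E)}\le C\,\|\dist(X\,Dv\,Y,SO(n))\|_{L^s(B)} .
\end{equation*}
To replace $A$ by the average $(Dv)$, I use the elementary inequality $\|Dv-(Dv)\|_{L^s(B)}\le 2\|Dv-A\|_{L^s(B)}$, valid for any constant matrix $A$: the average minimizes the $L^2$ deviation, and for $L^s$ one uses Jensen on $|(Dv)-A|\le \frac{1}{|B|}\int_B|Dv-A|$. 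This yields \eqref{eqFJMwithXY}.
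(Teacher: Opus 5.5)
Your proposal is correct and matches the paper's proof. The paper also reduces to $B=B_1(0)$, sets $w(x):=Xv(Yx)$ on the ellipsoid $Y^{-1}B_1$, and applies the Friesecke--James--M\"uller estimate with a constant that is uniform over the uniformly Lipschitz family $\{Y^{-1}B_1\}$. For that uniformity it cites Conti--Zwicknagl, which is exactly the step you flag as needing care. Your remaining steps are correct and make explicit what the paper leaves implicit: the change of variables in which the factors $(\det Y)^{1/s}$ cancel, and the passage from $X^{-1}RY^{-1}$ to the average $(Dv)$ via $|(Dv)-A|\le |B|^{-1/s}\|Dv-A\|_{L^s(B)}$.
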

\begin{proof}
 By scaling and translation we can assume $B=B_1(0)$.
Let $w(x):=Xv(Yx)\in W^{1,s}(Y^{-1}B_1;\R^n)$,
so that $Dw(x)=XDv(Yx)Y$ and
\begin{equation}
 \dist(Dw,SO(n))(Y^{-1}y)= \dist(X DvY, SO(n))(y).
\end{equation}
The sets $Y^{-1}B_1$ are uniformly Lipschitz, therefore the Friesecke-James-Müller rigidity estimate~\eqref{eq:euclidean_rigidity} holds uniformly in $Y$
\cite[Th.~5.10]{ContiZwicknagl2023}. This proves \eqref{eqFJMwithXY}.
 \end{proof}

\section{Review of elliptic estimates}\label{sec:elliptic}

In this Section we recall classical regularity estimates for linear elliptic PDEs and for nonlinear second-order equations with quadratic growth in the gradient  \cite{Morrey,GilbargTrudinger01}. For us it is important to obtain conditions on the coefficients so that the estimates are uniform. Therefore we briefly sketch the proofs.
In this section, greek indices have values in $\{1,\ldots,n\}$.

\subsection{Elliptic regularity for linear equations}

For a set
\begin{equation}\label{eqassumptomega}
 \text{ $\Omega \subset \R^n$  open, bounded and connected}
\end{equation}
we consider elliptic PDEs of the form
\begin{equation}  \label{eq:app_scalar_divergence_form}
- D_\beta (a^{\alpha \beta} D_\alpha u) = - D_\alpha f^\alpha + F  \quad \text{in $\Omega$}
\end{equation}
with $a\in C^0(\overline \Omega;\R^{n\times n}_\sym)$ and we  assume that there exists $\lambda>0$, $K>0$ with
\begin{alignat}{2}
a^{\alpha \beta}(x) \xi_\alpha \xi_\beta &\ge \lambda |\xi|^2  \quad &&\text{for all $\xi \in \R^n$ and all $x \in \Omega$,}
\label{eq:app_ellipticity_a}  \\
|a^{\alpha \beta}(x)| & \le  K  \quad &&\text{for all $x \in \Omega$}. \label{eq:app_boundedness_a}
\end{alignat}
Often we will make the stronger assumption that the $a^{\alpha \beta}$ are Lipschitz and
\begin{equation}\label{eq:app_lipschitz_a}
|D a^{\alpha \beta}(x)| \le K_1
\quad \text{for almost every $x \in \Omega$ }
\end{equation}
or that the $a^{\alpha \beta}$ are in $C^{1,1}$ and, in addition to  \eqref{eq:app_lipschitz_a},
\begin{equation}\label{eq:app_C2_a}
|D^2 a^{\alpha \beta}(x)| \le K_2
\quad \text{for almost every $x \in \Omega$.}
\end{equation}

We first recall the existence of $W^{1,p}$ solutions for zero boundary conditions.

\begin{lemma}[Global $W^{1,p}$ regularity]    \label{le:w1p_dirichlet} Let $s, p \in (1,\infty)$ with $\frac1s \le \frac1p + \frac1n$.
Assume \eqref{eqassumptomega} holds, that $\Omega$ has $C^1$ boundary, and that  the coefficients $a\in C^0(\overline \Omega;\R^{n\times n}_\sym)$ satisfy \eqref{eq:app_ellipticity_a}
and  \eqref{eq:app_boundedness_a}. Let $f \in L^p(\Omega; \R^n)$ and $F \in L^s(\Omega)$.
Then there exists a unique $u \in W^{1,p}_0(\Omega)$ such that $u$ is a distributional  solution of  \eqref{eq:app_scalar_divergence_form}. Moreover,
\begin{equation}   \label{eq:app_global_W1p}
\| u\|_{W^{1,p}(\Omega)} \le C \left( \|f\|_{L^p(\Omega)} + \|F\|_{L^s(\Omega)}  \right).
\end{equation}
The constant $C$ depends on $n$, $p$, $s$, $\Omega$, the constants $\lambda$ and $K$ in
\eqref{eq:app_ellipticity_a} and \eqref{eq:app_boundedness_a}, and the modulus of continuity of the $a^{\alpha \beta}$.
\end{lemma}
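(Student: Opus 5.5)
The plan is to prove Lemma~\ref{le:w1p_dirichlet} by the classical route: Lax--Milgram for $p=2$, then $L^p$ theory for constant coefficients, then freezing of coefficients and a partition of unity. The only new point is to track that all constants depend solely on $n,p,s,\Omega,\lambda,K$ and the modulus of continuity of $a$.

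\emph{Step 1: reduction to a pure divergence right-hand side.} First remove the term $F$. Since $\frac1s\le\frac1p+\frac1n$, the Sobolev embedding gives $W^{1,p'}_0(\Omega)\hookrightarrow L^{s'}(\Omega)$, by duality. So $F$ defines an element of $W^{-1,p}(\Omega)$. Concretely, I would solve $-\Delta w=F$ in a large ball $B\supset\Omega$ with the Newtonian potential, set $\tilde f:=-Dw$, and use Calder\'on--Zygmund together with Sobolev to get $\|Dw\|_{L^p}\le C\|F\|_{L^s}$. Then $F=-D_\alpha\tilde f^\alpha$, and the equation becomes $-D_\beta(a^{\alpha\beta}D_\alpha u)=-D_\alpha g^\alpha$ with $g=f+\tilde f$ and $\|g\|_{L^p}\le C(\|f\|_p+\|F\|_s)$.

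\emph{Step 2: the case $p=2$ and constant coefficients.} For $p=2$, existence and uniqueness in $W^{1,2}_0$ follow from Lax--Milgram using \eqref{eq:app_ellipticity_a}--\eqref{eq:app_boundedness_a}, and Poincar\'e controls the full norm. For constant coefficients, the bound $\|Du\|_{L^p(\R^n)}\le C(\lambda,K,n,p)\|g\|_{L^p}$ follows because $Du$ is obtained from $g$ by a Calder\'on--Zygmund operator. On the half-space with zero boundary data, the same bound holds by odd reflection after a linear change of variables that reduces $a$ to the identity.

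\emph{Step 3: a priori estimate for variable coefficients.} Cover $\overline\Omega$ by small balls of radius $r$, chosen so that the oscillation of $a$ on each ball is below a threshold $\eta(\lambda,K,n,p)$; this is where the modulus of continuity enters. Near boundary points, flatten $\partial\Omega$ with a $C^1$ map whose differential is close to the identity on the ball, which is where the $C^1$ regularity of $\Omega$ enters. For a cutoff $\zeta$ and a frozen coefficient $a_0$, the localized function $\zeta u$ solves the constant-coefficient equation with right-hand side
\[
-D_\alpha\bigl(\zeta g^\alpha+(a_0-a)^{\alpha\beta}D_\beta(\zeta u)+a^{\alpha\beta}uD_\beta\zeta\bigr)+\text{l.o.t.}
\]
The term $(a_0-a)D(\zeta u)$ is absorbed by smallness of $\eta$. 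The remaining terms involve only $u$ and $Du$ at lower integrability, paired with $D\zeta$. Summing over the cover gives
\[
\|u\|_{W^{1,p}}\le C(\|g\|_p+\|u\|_{L^p}).
\]
For $p\ge2$ I would then bootstrap, or use a Sobolev-gain iteration starting from the $W^{1,2}$ estimate. The lower-order term $\|u\|_{L^p}$ is removed by a compactness and contradiction argument based on uniqueness. Uniqueness itself for $p<2$ I would obtain by duality from the result for $p'>2$.

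\emph{Step 4: existence for general $p$.} Approximate $g$ by smooth $g_k$ and solve in $W^{1,2}_0$. Interior and boundary regularity put the solutions in $W^{1,p}$; alternatively one can use the method of continuity in $t\mapsto(1-t)\lambda\Id+ta$ together with the a priori estimate. Then pass to the limit.

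The main obstacle is the removal of $\|u\|_{L^p}$ together with uniqueness for $p<2$. The compactness argument is precisely what makes $C$ depend on the whole class of coefficients, rather than only on $\lambda$ and $K$. I would therefore run the contradiction argument over the family of coefficients with a fixed modulus of continuity, which is compact in $C^0$ by Arzel\`a--Ascoli, so that the resulting constant is uniform as claimed.
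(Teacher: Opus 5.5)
Your Step~1 is the paper's reduction. There $F=-D_\alpha g^\alpha$ with $g^\alpha=(D_\alpha N)\ast F$, and $g$ is bounded in $L^p$ by the fractional-integration estimate for the $(1-n)$-homogeneous kernel $D_\alpha N$. Your Calder\'on--Zygmund-plus-Sobolev argument gives the same bound (with $\tilde f=Dw$; your sign is off).

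For $F=0$, however, the paper proves nothing and simply quotes Theorem~1 of Auscher--Qafsaoui, which even allows coefficients in $\mathrm{vmo}\cap L^\infty$. You instead rebuild the classical theory for continuous coefficients: Lax--Milgram, constant coefficients on $\R^n$ and on the half-space, $C^1$ flattening and freezing, duality. That is a legitimate, self-contained alternative. Its merit is that it shows $C$ depends only on $n,p,s,\lambda,K$, the modulus of continuity of $a$ and $\Omega$, which is what is used later for $L_g$. The citation buys brevity and a larger coefficient class.

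One step, though, does not close as written. In the equation for $\zeta u$, the commutator $a^{\alpha\beta}D_\alpha u\,D_\beta\zeta$ is not lower order on the $W^{-1,p}$ scale. Estimating it through $\|Du\|_{L^q}$ with $q<p$ gains nothing as the radius shrinks, because the factor $|D\zeta|\sim r^{-1}$ cancels the H\"older--Sobolev gain. So summing over the cover gives $\|u\|_{W^{1,p}}\le C(\|g\|_{L^p}+\|Du\|_{L^q})$, not $C(\|g\|_{L^p}+\|u\|_{L^p})$. With $\|Du\|_{L^q}$ on the right, your contradiction argument fails: this quantity is not continuous under weak convergence in $W^{1,p}$ (think of $k^{-1}\zeta(x)\sin(kx_1)$). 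For $p<2$, where you rely on that argument, this is a hole.

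It can be patched, for instance by viewing the commutator as a compact map $L^p\to W^{-1,p}$, or by mollifying $a$. But compactness is not needed at all:
\begin{itemize}
\item For $p\ge2$, run your Sobolev-gain iteration from the Lax--Milgram bound $\|Du\|_{L^2}\le\lambda^{-1}\|g\|_{L^2}\le C\|g\|_{L^p}$. Run it as a regularity statement: the frozen-coefficient map is a contraction simultaneously in $W^{1,q}$ and in $W^{1,\tilde q}$ for the next exponent $\tilde q$. This also settles existence. By contrast, smoothing $g$ does not by itself improve the regularity of $u$ when $a$ is only continuous, and the method of continuity would presuppose $W^{1,p}_0$-solvability of some operator on the $C^1$ domain.
\item For $p<2$, use duality for the estimate and not only for uniqueness. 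For $g\in L^2$ and $h\in L^{p'}$, test the $W^{1,2}_0$ solution $u$ against the $W^{1,p'}_0$ solution $v$ of $-\Div(aDv)=-\Div h$. This gives $\int_\Omega Du\cdot h=\int_\Omega g\cdot Dv$, hence $\|Du\|_{L^p}\le C\|g\|_{L^p}$, and density concludes.
\end{itemize}
All constants are then explicit. In particular, the dependence on the modulus of continuity enters through the freezing radius, not through a compactness argument.
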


\begin{proof}
It suffices to treat the case $s\le p$.
We first recall the standard reduction to the case $F = 0$. Indeed, if $F \ne 0$, we can extend $F$ by zero to $\R^n$ and write $F$ as $F = - D_\alpha g^\alpha$ where $g^\alpha = (D_\alpha N) \ast F$ and where $N$ is the fundamental solution of $-\Delta$. The kernel  $D_\alpha N$ is $1-n$ homogeneous and bounded  on the unit sphere. By standard estimates for such kernels  \cite[Thm.\ 1, Paragraph 1, Chapter 5]{Stein1970} we have $\| g^\alpha\|_{L^p(\Omega)} \le C  \|F \|_{L^s(\R^n)} =C  \|F \|_{L^s(\Omega)}$ where the constant depends only on $p$, $s$ and $|\Omega|$.

For $F= 0$, Lemma~\ref{le:w1p_dirichlet} is a special case of \cite[Thm.\ 1]{auscher-qafsaoui2002} where coefficients in the larger space $\mathrm{vmo}(\Omega) \cap L^\infty(\Omega)$ are considered.
\end{proof}

If the right hand side of \eqref{eq:app_scalar_divergence_form} is in $L^p$ and if the coefficients are Lipschitz, then the solution is in $W^{2,p}$ on any open set $\Omega'$ with $\overline{\Omega'} \subset \Omega$.

\begin{lemma}    \label{eq:app_w2q_estimates_first}  Suppose that
 \eqref{eqassumptomega} holds, that $\Omega' \subset\subset \Omega$ is open, and that $a: \Omega \to \R^{n\times n}_\sym$ satisfy \eqref{eq:app_ellipticity_a}-\eqref{eq:app_lipschitz_a}. Let $F \in L^p(\Omega)$, $p\in(1,\infty)$. If $u \in W^{1,p}(\Omega)$ is a distributional solution of
\begin{equation}  \label{eq:app_linear_divergence}
- D_\beta (a^{\alpha \beta} D_\alpha u) = F
\end{equation}
then $u \in W^{2,p}(\Omega')$ and
\begin{equation}  \label{eq:app_w2p_interior}
\| u \|_{W^{2,p}(\Omega')} \le C (\|F\|_{L^p(\Omega)} + \| u \|_{W^{1,p}(\Omega)})
\end{equation}
where $C$ depends on $n$, $p$, $\lambda$, $K$, $K_1$, $\Omega'$, and $\Omega$.
\end{lemma}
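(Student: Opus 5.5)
We plan to prove Lemma~\ref{eq:app_w2q_estimates_first} by the classical difference-quotient and freezing-of-coefficients argument, keeping track only of $n,p,\lambda,K,K_1$ and the geometry of $\Omega'\subset\subset\Omega$. Since $W^{2,p}$ is a local property and $\overline{\Omega'}$ is compact, we cover $\overline{\Omega'}$ by finitely many balls $B_{r}(x_i)$ with $B_{4r}(x_i)\subset\Omega$, where $r$ depends only on $\dist(\Omega',\partial\Omega)$ and on $\lambda,K,K_1$. It then suffices to prove the estimate on $B_r(x_0)$ with $\Omega$ replaced by $B_{4r}(x_0)$.

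\textbf{Step 1 (localization).} We take a cutoff $\eta\in C^\infty_c(B_{2r}(x_0))$ with $\eta=1$ on $B_r(x_0)$. The function $w:=\eta u$ solves
\begin{equation*}
 -D_\beta(a^{\alpha\beta}D_\alpha w)=\eta F - a^{\alpha\beta}D_\alpha u\, D_\beta\eta - D_\beta(a^{\alpha\beta}u\,D_\alpha\eta)=:\tilde F .
\end{equation*}
Because $a$ is Lipschitz with constant $K_1$, the last term expands to a function bounded by $C(|u|+|Du|)$. Hence $\|\tilde F\|_{L^p}\le C(\|F\|_{L^p}+\|u\|_{W^{1,p}})$, and $w$ is compactly supported.

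\textbf{Step 2 (freezing coefficients).} We set $a_0:=a(x_0)$ and rewrite the equation as
\begin{equation*}
 -a_0^{\alpha\beta}D_\alpha D_\beta w=\tilde F+(a^{\alpha\beta}-a_0^{\alpha\beta})D_\alpha D_\beta w+(D_\beta a^{\alpha\beta})D_\alpha w .
\end{equation*}
After a linear change of variables that depends only on $\lambda,K$, the constant-coefficient operator on the left becomes the Laplacian. The Calder\'on--Zygmund estimate \cite{Stein1970} for compactly supported functions then gives
\begin{equation*}
 \|D^2w\|_{L^p}\le C\bigl(\|\tilde F\|_{L^p}+K_1 r\|D^2 w\|_{L^p}+K_1\|Dw\|_{L^p}\bigr),
\end{equation*}
using $|a-a_0|\le K_1\cdot 2r$ on the support of $w$. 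Choosing $r$ with $CK_1 r\le 1/2$, we absorb the middle term into the left-hand side.

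\textbf{Main obstacle: qualitative regularity.} The absorption in Step 2 is legitimate only if we already know $D^2w\in L^p$, whereas a priori $u$ is merely $W^{1,p}$. We plan to remove this by approximation. We mollify the coefficients to $a_\eps$ (same $\lambda,K,K_1$) and solve $-D_\beta(a_\eps^{\alpha\beta}D_\alpha w_\eps)=\tilde F_\eps$ in $B_{2r}$ with $w_\eps=w$ on $\partial B_{2r}$; this is possible by Lemma~\ref{le:w1p_dirichlet}, since $w$ vanishes near $\partial B_{2r}$. The solutions $w_\eps$ are smooth, or at least locally $W^{2,p}$ by classical theory for smooth coefficients. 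Applying the absorbed estimate to $\eta' w_\eps$, with a further cutoff $\eta'$ (harmless, by Step 1), gives uniform $W^{2,p}$ bounds on a slightly smaller ball. Uniqueness in Lemma~\ref{le:w1p_dirichlet}, together with $a_\eps\to a$ uniformly, yields $w_\eps\to w$ in $W^{1,p}$, and weak compactness then passes the $W^{2,p}$ bound to the limit. An alternative for this step is Nirenberg difference quotients in $W^{1,p}$, using Lemma~\ref{le:w1p_dirichlet} for the differentiated equation.

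\textbf{Conclusion.} Summing the local bounds over the finite cover yields \eqref{eq:app_w2p_interior}, with $C$ depending only on $n,p,\lambda,K,K_1,\Omega',\Omega$.
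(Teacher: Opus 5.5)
Your proof is correct, but it takes a different route from the paper's.

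\textbf{The paper's route.} The paper does not prove the a priori estimate itself. It rewrites the equation in nondivergence form $-a^{\alpha\beta}D_{\alpha\beta}u-b^{\alpha}D_\alpha u=F$ with $b^\alpha=D_\beta a^{\alpha\beta}$, $|b|\le nK_1$, and quotes the interior $W^{2,p}$ estimate for strong solutions (Gilbarg--Trudinger, Thm.~9.11). For the qualitative step it mollifies the \emph{solution}, not the coefficients. The function $u_k=\rho_k\ast u$ solves the same equation with right-hand side $\rho_k\ast F+D_\beta\bigl(\rho_k\ast(a^{\alpha\beta}D_\alpha u)-a^{\alpha\beta}(\rho_k\ast D_\alpha u)\bigr)$. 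This is bounded in $L^p$ uniformly in $k$ by the Friedrichs commutator estimate, so the a priori bound applies to the smooth $u_k$ uniformly.

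\textbf{Your route.} You prove the a priori bound by hand: you freeze coefficients at $x_0$, apply Calder\'on--Zygmund for the Laplacian after a linear change of variables, and absorb using the explicit smallness $|a-a_0|\le CK_1 r$ that Lipschitz coefficients give. You get qualitative regularity by mollifying $a$ and solving auxiliary Dirichlet problems. This uses existence, uniqueness and the uniform stability estimate of Lemma~\ref{le:w1p_dirichlet} to get $w_\eps\to w$ in $W^{1,p}$.

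\textbf{What each buys.} Your route makes the constants transparent: only the Calder\'on--Zygmund constant, $\lambda$, $K$, $K_1$ and the geometry enter, which is the point of this section. The paper's route needs no solvability theory, only the commutator lemma. You instead rely on the Auscher--Qafsaoui result behind Lemma~\ref{le:w1p_dirichlet}.

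\textbf{Interior smoothness of $w_\eps$.} For $p<2$ the interior smoothness of $w_\eps$ should be justified. The cleanest way is to identify $w_\eps$, via uniqueness in $W^{1,p}_0$, with the Lax--Milgram solution in $W^{1,2}_0$, and then apply $L^2$ interior regularity for smooth coefficients.

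\textbf{Your difference-quotient alternative.} Once you accept Lemma~\ref{le:w1p_dirichlet}, this alternative makes the freezing step unnecessary. The function $\Delta_h(\eta u)\in W^{1,p}_0(B_{2r})$ solves a divergence-form equation with coefficients $a(\cdot+he_k)$. Its right-hand side $\Delta_h\tilde F+D_\beta\bigl((\Delta_h a^{\alpha\beta})D_\alpha(\eta u)\bigr)$ is bounded in $W^{-1,p}$ uniformly in $h$. Uniqueness then gives a uniform $W^{1,p}$ bound on the difference quotients directly.
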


\begin{proof} If we assume, in addition, that $u \in W^{2,p}_{\loc}(\Omega)$, then the equation can be rewritten as
$$ -a^{\alpha \beta} D_{\alpha \beta} u - b^{\alpha} D_{\alpha} u =  F,$$
with $b^{\alpha} := D_{\beta} a^{\alpha \beta} \in L^\infty(\Omega)$ and $\| b^{\alpha}\|_{L^\infty} \le n K_1$,
and the  estimate   \eqref{eq:app_w2p_interior} follows from  \cite[Thm.\  9.11]{GilbargTrudinger01}.

To show that  $u \in W^{2,p}_{\loc}(\Omega)$ one can use the usual convolution argument. Let $\Omega'$ be open with  $\overline{\Omega'} \subset \Omega$. Then there exists an open set $\Omega''$ such that $\overline{\Omega'} \subset \Omega'' \subset \overline{\Omega''} \subset \Omega$.
The
function $u_k := \rho_k \ast u$, with $\rho_k\in C_c^\infty(B_{1/k})$ a mollification kernel, for $k$ large enough
is a $C^\infty$ function in $\Omega''$ and satisfies the equation
$$ - D_\beta (a^{\alpha \beta} D_\alpha u_k) = \rho_k \ast F + D_\beta (\rho_k  \ast (a^{\alpha \beta} D_\alpha u)
- a^{\alpha \beta} (\rho_k \ast D_\alpha u)).
$$
The desired assertion $u \in W^{2,p}(\Omega')$ follows from the $W^{2,p}$ estimate for $u_k$ (with $\Omega$ replaced by $\Omega''$) and the commutator estimate
$$  \|  D_\gamma (\rho_k  \ast (a^{\alpha \beta} v)
- a^{\alpha \beta} (\rho_k \ast v)) \|_{L^p(\Omega'')} \le C\,   \Lip(a^{\alpha \beta})  \, \| v\|_{L^p(\Omega)}
$$
which holds for all $v\in L^p(\Omega)$.
\end{proof}

By iteration of  Lemma~\ref{eq:app_w2q_estimates_first} we obtain the following improvement of interior regularity for $W^{1,s}$ solutions of elliptic systems in diagonal form with right hand sides with higher regularity. 

\begin{lemma}   \label{le:app_W2m_in_unit_ball}
Let $\frac12 \le r' < r \le 1$. Suppose that $a: B_r \to \R^{n\times n}_\sym$ satisfies \eqref{eq:app_ellipticity_a}-\eqref{eq:app_lipschitz_a} in $B_r$ and
that $b^{i\alpha}_j \in L^\infty(B_r)$. Let $ s, m \in (1,\infty)$.
Suppose that $w \in W^{1,s}(B_r;\R^d)$ satisfies
\begin{equation}  \label{eq:app_scalar_rhs}
-  D_\beta (a^{\alpha \beta}  D_\alpha w^{i}) + b^{i\alpha}_j D_\alpha w^{j}  = F^{i}
\end{equation}
with $F \in L^m(B_r;\R^d)$.  Then  $w \in W^{2,m}(B_{r'})$ and
\begin{equation}  \label{eq:app_W2m_in_unit_ball}
\| w\|_{W^{2,m}(B_{r'})} \le C (\|F\|_{L^m(B_r)} + \| w\|_{W^{1,s}(B_r)})
\end{equation}
where $C$ may depend on $n$, $s$, $m$,  $\lambda$, $K$, $K_1$, $\|b \|_{L^\infty}$, $r$, and $r'$.
\end{lemma}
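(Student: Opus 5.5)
The plan is to bootstrap: first raise the integrability of $Dw$ from $L^s$ up to $L^m$ on a sequence of shrinking balls, then apply Lemma~\ref{eq:app_w2q_estimates_first} once more to obtain the $W^{2,m}$ bound. Fix intermediate radii $r'=r_K<r_{K-1}<\dots<r_0=r$, for instance $r_k:=r-k(r-r')/K$. The number of steps $K$ depends only on $n$, $s$, $m$, so all constants stay uniform.

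The single step is as follows. Suppose $w\in W^{1,t}(B_{r_k})$ for some $t\in[s,m]$. Move the lower-order term to the right-hand side and treat each component $w^i$ as a solution of the scalar equation \eqref{eq:app_linear_divergence},
\begin{equation*}
-D_\beta(a^{\alpha\beta}D_\alpha w^i)=F^i-b^{i\alpha}_jD_\alpha w^j=:\tilde F^i .
\end{equation*}
Since $F\in L^m\subset L^t$ on the bounded ball and $b\in L^\infty$, we have $\tilde F^i\in L^t(B_{r_k})$ with $\|\tilde F\|_{L^t}\le C(\|F\|_{L^m}+\|w\|_{W^{1,t}})$. Lemma~\ref{eq:app_w2q_estimates_first}, applied with $\Omega=B_{r_k}$, $\Omega'=B_{r_{k+1}}$ and exponent $t$, gives $w\in W^{2,t}(B_{r_{k+1}})$ together with the corresponding bound. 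Sobolev embedding on the ball $B_{r_{k+1}}$ then yields $Dw\in L^{t^*}$ with $t^*=nt/(n-t)$ if $t<n$, and any finite exponent if $t\ge n$. The ball has radius at least $1/2$, so the embedding constants are uniform. We therefore replace $t$ by $\min\{t^*,m\}$. Since $1/t^*=1/t-1/n$, after at most about $n+1$ steps we reach $t=m$ (when $t=n$ or larger, go directly to $m$). The same scheme also covers the case $s>m$ trivially, since then we start with $t=m$ on the bounded ball.

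Once $w\in W^{1,m}(B_{r_{K-1}})$ with $\|w\|_{W^{1,m}}\le C(\|F\|_{L^m}+\|w\|_{W^{1,s}(B_r)})$, one last application of Lemma~\ref{eq:app_w2q_estimates_first} with exponent $m$ gives the $W^{2,m}(B_{r'})$ bound \eqref{eq:app_W2m_in_unit_ball}. Chaining the estimates gives a constant depending on $n,s,m,\lambda,K,K_1,\|b\|_\infty$ and on the gaps $r_k-r_{k+1}=(r-r')/K$, as claimed.

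The main point to watch is that Lemma~\ref{eq:app_w2q_estimates_first} is stated for $u\in W^{1,p}$ with right-hand side in $L^p$ at the same exponent. At each stage we must therefore use the current exponent $t$ for both $u$ and $\tilde F$, which is why the iteration only gains one Sobolev step at a time. The coupling through $b$ is harmless because it enters only through first derivatives, which are controlled at the current exponent. A minor technical point is that Lemma~\ref{eq:app_w2q_estimates_first} is scalar and the operator is diagonal, so it applies componentwise.
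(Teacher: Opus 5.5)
Your proposal is correct and is essentially the paper's own bootstrap. You move the $b$-term to the right-hand side and apply Lemma~\ref{eq:app_w2q_estimates_first} componentwise, using the same current exponent for $w$ and for the right-hand side on shrinking concentric balls. You then gain integrability through $W^{2,t}\hookrightarrow W^{1,t^*}$ with $1/t^*=1/t-1/n$ until reaching $m$, and when $m\le s$ you apply the lemma once at exponent $m$.
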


\begin{proof}
 The estimate  \eqref{eq:app_W2m_in_unit_ball} follows from
Lemma~\ref{eq:app_w2q_estimates_first}  and a standard bootstrap argument.
To see this, let $r' \le \rho' < \rho \le r$,
$\sigma\in(1,m]$,
and assume that $w \in W^{1,\sigma}(B_\rho;\R^d)$ satisfies  \eqref{eq:app_scalar_rhs}
with $F \in L^m(B_\rho;\R^d)$.  Then, by Lemma~\ref{eq:app_w2q_estimates_first} applied to the equation for $w^{i}$ with right hand side $F^i - b_j^{i\alpha}D_\alpha w^j\in L^\sigma$ we get
$w \in W^{2,\sigma}(B_{\rho'};\R^d)$ and
\begin{equation} \label{eq:app_W2p_estimatesa}
\| w\|_{W^{2,\sigma}(B_{\rho'})} \le C (\|F\|_{L^m(B_\rho)} + \| w\|_{W^{1,\sigma}(B_\rho)}),
\end{equation}
where $C$ depends on $n$, $\sigma$, $m$,  $\lambda$, $K$, $K_1$, $\|b \|_{L^\infty}$, $\rho$, and $\rho'$.

If $m \le s$, the desired assertion follows from  \eqref{eq:app_W2p_estimatesa} with $\sigma = m$, $\rho =r$ and $\rho' = r'$.

If $m > s$,  we
pick an integer $L \ge 1$ such that
$$\frac1s - \frac{L}n  \le \frac1m < \frac1s - \frac{L-1}{n}$$
and then define
$\sigma_l$ for $l = 0, \ldots, L$
by
$$\frac{1}{\sigma_l} = \frac1s - \frac{l}n  \quad \text{for $0 \le l \le L -1$}, \quad \sigma_L = m. $$
Then we have the embedding  $W^{2,\sigma_l} \hookrightarrow W^{1,\sigma_{l+1}}$ for $l \le L-1$
and the assertion  \eqref{eq:app_W2m_in_unit_ball} follows by successively applying
 \eqref{eq:app_W2p_estimatesa} on a sequence of concentric balls.

\end{proof}

 We will also use the following Schauder estimate. 

\begin{lemma}    \label{le:app_linear_schauder}
Let $\Omega \subset \R^n$ be open and bounded, let $\Omega' \subset\subset\Omega$ be open,  fix $\tau \in (0,1)$ and suppose that $u \in C^{1,\tau}(\Omega)$ is a distributional solution of the equation
$$ - D_\beta (a^{\alpha \beta} D_\alpha u) = F$$
where $a: \Omega \to \R^{n\times n}_\sym$ satisfies \eqref{eq:app_ellipticity_a}-\eqref{eq:app_C2_a} and $F \in C^{0,\tau}(\Omega)$. Then $u \in C^{2,\tau}(\Omega')$ and
\begin{equation} \label{eq:app_schauder}
\| u \|_{C^{2,\tau}(\Omega')} \le C( \| u \|_{C^{1,\tau}(\Omega)} + \|F\|_{C^{0,\tau}(\Omega)} )
\end{equation}
where $C$  may depend on $\Omega$, $\Omega'$,  $n$, $\tau$, $\lambda$, $K$, $K_1$ and  $K_2$.
\end{lemma}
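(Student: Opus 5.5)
The plan is to reduce the Schauder estimate for the divergence-form equation to the classical interior Schauder estimate for nondivergence equations, \cite[Thm.~6.2]{GilbargTrudinger01}. The argument has two parts: first show that $u$ is a strong $W^{2,p}_{\loc}$ solution, then upgrade to $C^{2,\tau}$ and read off \eqref{eq:app_schauder}.

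\emph{Step 1: $u$ is a strong solution.} Since $u\in C^{1,\tau}(\Omega)$, we have $u\in W^{1,p}_{\loc}$ for every $p<\infty$, and $F\in L^\infty_{\loc}$. Lemma~\ref{eq:app_w2q_estimates_first}, applied on a chain of intermediate sets $\Omega'\subset\subset\Omega''\subset\subset\Omega$, then gives $u\in W^{2,p}(\Omega'')$. Hence the equation holds almost everywhere in nondivergence form,
\begin{equation*}
-a^{\alpha\beta}D_{\alpha\beta}u = F + b^\alpha D_\alpha u, \qquad b^\alpha:=D_\beta a^{\alpha\beta}.
\end{equation*}

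\emph{Step 2: the right-hand side is $C^{0,\tau}$.} By \eqref{eq:app_C2_a} the functions $b^\alpha$ are Lipschitz with constant bounded by $nK_2$ and bounded by $nK_1$. Therefore $\tilde F:=F+b^\alpha D_\alpha u$ satisfies
\begin{equation*}
\|\tilde F\|_{C^{0,\tau}(\Omega'')}\le \|F\|_{C^{0,\tau}}+C(K_1,K_2,\Omega)\|u\|_{C^{1,\tau}}.
\end{equation*}
The constant picks up a factor $\mathrm{diam}(\Omega)^{1-\tau}$, which is where the dependence on $\Omega$ enters. The coefficients $a^{\alpha\beta}$ are Lipschitz, hence $C^{0,\tau}$ with norm controlled by $K$, $K_1$ and $\mathrm{diam}\,\Omega$.

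\emph{Step 3: regularity upgrade.} This is the main obstacle. The classical estimate \cite[Thm.~6.2]{GilbargTrudinger01} is an a priori bound for $C^{2,\tau}$ solutions, whereas we only know $u\in W^{2,p}$. I would first show $u\in C^{2,\tau}_{\loc}$, either by invoking \cite[Thm.~9.19]{GilbargTrudinger01}, which upgrades $W^{2,p}_{\loc}$ strong solutions with $C^{0,\tau}$ data to $C^{2,\tau}$, or by approximation. For the approximation route, solve the Dirichlet problem on a ball $B\subset\subset\Omega''$ with mollified coefficients and data, using \cite[Thm.~6.13]{GilbargTrudinger01}. Uniform interior Schauder bounds together with uniqueness of $W^{2,p}$ strong solutions (\cite[Thm.~9.15]{GilbargTrudinger01}) then identify the limit with $u$.

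Once $u\in C^{2,\tau}_{\loc}$, the interior estimate on $\Omega'\subset\subset\Omega''$ gives
\begin{equation*}
\|u\|_{C^{2,\tau}(\Omega')}\le C\bigl(\|u\|_{C^0(\Omega'')}+\|\tilde F\|_{C^{0,\tau}(\Omega'')}\bigr).
\end{equation*}
Here $C$ depends only on $n$, $\tau$, $\lambda$, the $C^{0,\tau}$ norm of $a$, and $\dist(\Omega',\partial\Omega'')$. Inserting the bound from Step~2 yields \eqref{eq:app_schauder} with a constant depending only on $\Omega$, $\Omega'$, $n$, $\tau$, $\lambda$, $K$, $K_1$ and $K_2$.
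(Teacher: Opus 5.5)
Your proof is correct. It coincides with the paper's in the final quantitative step, the a priori interior Schauder estimate from Chapter~6 of Gilbarg--Trudinger applied to the equation in nondivergence form. It obtains the qualitative regularity $u\in C^{2,\tau}_{\loc}$ differently.

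The paper works with $u_k=\rho_k\ast u$ directly in the divergence-form equation, as in the proof of Lemma~\ref{eq:app_w2q_estimates_first}. The right-hand side for $u_k$ contains the commutator $D_\beta(\rho_k\ast(a^{\alpha\beta}D_\alpha u)-a^{\alpha\beta}(\rho_k\ast D_\alpha u))$, which is bounded in $C^{0,\tau}(\Omega'')$ uniformly in $k$. The a priori estimate then gives uniform $C^{2,\tau}(\Omega')$ bounds for $u_k$, and one passes to the limit by compactness.

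You instead first use Lemma~\ref{eq:app_w2q_estimates_first} to get $u\in W^{2,p}_{\loc}$, so that $-a^{\alpha\beta}D_{\alpha\beta}u=F+b^\alpha D_\alpha u$ holds a.e.\ with a $C^{0,\tau}$ right-hand side. You then let \cite[Thm.~9.19]{GilbargTrudinger01} upgrade this strong solution to $C^{2,\tau}$.

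Your route is more modular and avoids the H\"older commutator bound. The paper only asserts that bound, and its uniformity in $k$ needs a Taylor expansion of $a$ based on the $C^{1,1}$ bound. The paper's route is more self-contained: it uses only the a priori estimate and Arzel\`a--Ascoli, whereas you import the regularity theory for strong solutions from Chapter~9 of Gilbarg--Trudinger.

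A minor point on your fallback approximation argument. Solutions from \cite[Thm.~6.13]{GilbargTrudinger01} lie in $C^0(\overline B)\cap C^{2,\alpha}(B)$, not in $W^{2,p}(B)$. So the identification with $u$ should go through the Aleksandrov maximum principle \cite[Thm.~9.1]{GilbargTrudinger01} for functions in $W^{2,n}_{\loc}(B)\cap C^0(\overline B)$, not through the uniqueness statement of Thm.~9.15. Also, no mollification of the coefficients is needed there, since Thm.~6.13 already allows H\"older coefficients. This does not affect your main argument via Thm.~9.19.
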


\begin{proof}
If $u \in C^{2,\tau}_{\loc}(\Omega)$ then the assertion follows from     \cite[Cor.\ 6.3]{GilbargTrudinger01}. In the general case  one can proceed as in the proof of Lemma~\ref{eq:app_w2q_estimates_first}. One chooses an open set $\Omega''$ with $\overline{\Omega'} \subset \Omega'' \subset \overline{\Omega''} \subset \Omega$ and uses the commutator estimate
$$  \|  D_\gamma (\rho_k  \ast (a^{\alpha \beta} v)
- a^{\alpha \beta} (\rho_k \ast v)) \|_{C^{0,\tau}(\Omega'')} \le C\,  (K_1 + K_2)  \, \| v\|_{C^{0,\tau}(\Omega)}
$$
for $v \in C^{0,\tau}(\Omega)$  and sufficiently large $k$.
\end{proof}

\subsection[Estimates for elliptic systems with quadratic growth in ${Du}$ and diagonal principal part]{Estimates for elliptic systems with quadratic growth in $\boldsymbol{Du}$ and diagonal principal part}

In this section we loosely follow the notation in \cite[Chapter 9.4]{Jost2017}.
Let $\Omega \subset \R^n$ be open.  We study  weak solutions $u: \Omega \to \R^d$ of
\begin{equation}   \label{eq:A_system_quadratic_growth}
-D_\beta(  a^{\alpha \beta}(x) D_\alpha u^{i})(x) = G^{i}(x, u(x), Du(x)),
\end{equation}
$i=1, \dots, d$,
and obtain estimates on balls $B_r=B(x_0,r)\subset\subset \Omega$.
We assume that the maps $(x,z,A) \to G^{i}(x,z,A)$,  defined on $\Omega \times \R^d \times \R^{d \times n}$, are $C^1$ and satisfy the growth conditions
\begin{eqnarray}
|G(x,z,A)| &\le&  c_1 |A|^2 , \quad    \label{eq:app_growth_G} \\
|D_x G(x,z,A)|  &\le& \gamma_1 |A|^2,
\label{eq:app_DxG} \\
|D_z G(x,z,A)| &\le&   \gamma_3 |A|^2,  \label{eq:app_DzG} \\
|D_A G(x,z,A)| &\le& \gamma_5 |A|   \label{eq:app_DAG}
\end{eqnarray}
 for all $(x,z,A) \in \Omega \times \R^d \times \R^{d \times n}$.
 The harmonic map equation has  this structure, with the exception that the nonlinearity $G$ is in general only defined
for $z$ in a suitable ball $B$ in $\R^d$. Multiplying $G$ by $\eta(z)$, where $\eta$ is a cut-off function with $\eta=1$ in $B'\subset\subset B$ we can  put the harmonic
map equation into the setting above.

\begin{lemma}\label{le:C1_C2_estimate} Let $m > 2n$, $C' >0$ and $r \in (0,1]$. Suppose that $a: B_r(x_0) \to \R^{n\times n}_\sym$ satisfies \eqref{eq:app_ellipticity_a}-\eqref{eq:app_lipschitz_a} and that $G$ satisfies  \eqref{eq:app_growth_G}.
Assume  that $u \in W^{1,m}(B_r(x_0); \R^d)$ is
a weak solution of  \eqref{eq:A_system_quadratic_growth} in $B_r(x_0)$ with
\begin{equation}   \label{eq:app_apriori_bound_Du_Lm}
 \| Du\|_{L^m(B_r(x_0))} \le C' r^{-1 + n/m}.
\end{equation}
Then $u \in C^1(B_{3r/4}(x_0);\R^d)$ and
\begin{equation}\label{eq:app_interior_lip}
\| Du \|_{C^0(B_{3r/4}(x_0))} \le C  r^{-n/m} \| Du \|_{L^m(B_{r}(x_0))},
\end{equation}
where
$C$ depends on $n,d,m$, $C'$  as well as  the constants $\lambda$,  $K$, $K_1$,  and  $c_1$ in
\eqref{eq:app_ellipticity_a}-\eqref{eq:app_lipschitz_a} and   \eqref{eq:app_growth_G}.

If, in addition, the coefficients $a^{\alpha \beta}$ satisfy \eqref{eq:app_C2_a} and $G$ satisfies
\eqref{eq:app_DxG}-\eqref{eq:app_DAG}, then
 $u \in C^2(B_{r/2}(x_0); \R^d)$ and
\begin{equation}\label{eq:app_interior_C2}
\| D^2 u \|_{C^0(B_{r/2}(x_0))} \le C r^{-1-n/m} \| Du \|_{L^m(B_{r}(x_0))}. \end{equation}
Here $C$ depends on $n,d,m$, $C'$  as well as  the constants $K$, $K_1$, $K_2$, $c_1$, $\gamma_1$, $\gamma_3$,
and $\gamma_5$
in the estimates
\eqref{eq:app_ellipticity_a}-\eqref{eq:app_C2_a} and
\eqref{eq:app_growth_G}-\eqref{eq:app_DAG}.
\end{lemma}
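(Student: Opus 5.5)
By scaling and translation we reduce to $x_0=0$ and $r=1$. Set $v(y):=r^{-1}u(x_0+ry)$ on $B_1$. Then $Dv(y)=Du(x_0+ry)$ and $v$ solves an equation of the same form, with coefficients $a(x_0+r\cdot)$ and nonlinearity $r\,G(x_0+ry,rv,Dv)$. For $r\le1$ all the constants in \eqref{eq:app_ellipticity_a}--\eqref{eq:app_C2_a} and \eqref{eq:app_growth_G}--\eqref{eq:app_DAG} are preserved or improved. The hypothesis \eqref{eq:app_apriori_bound_Du_Lm} becomes $\|Dv\|_{L^m(B_1)}\le C'$, and both claimed estimates are scale invariant in this normalization. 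So it suffices to prove $\|Dv\|_{C^0(B_{3/4})}\le C\|Dv\|_{L^m(B_1)}$ and $\|D^2v\|_{C^0(B_{1/2})}\le C\|Dv\|_{L^m(B_1)}$ under $\|Dv\|_{L^m(B_1)}\le C'$.

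\emph{First estimate.} The right-hand side is $F:=G(\cdot,v,Dv)$, with $|F|\le c_1|Dv|^2\in L^{m/2}$ and $m/2>n$. I treat the system as $d$ decoupled scalar linear equations (the principal part is diagonal) with right-hand side $F^i$, and apply Lemma~\ref{le:app_W2m_in_unit_ball} with $b=0$, exponent $m/2$ and $s=m$ on $B_1\to B_{7/8}$. This gives $v\in W^{2,m/2}(B_{7/8})$ with norm bounded by $C(\|Dv\|_{L^m}^2+\|v-(v)\|_{W^{1,m}})$; I subtract the average, since the equation does not see constants when $G$ is independent of $z$ or when it is bounded uniformly in $z$. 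Using Poincaré and $\|Dv\|_{L^m}^2\le C'\|Dv\|_{L^m}$, the result is linear in $\|Dv\|_{L^m}$. Since $m/2>n$, Morrey gives $Dv\in C^{0,1-2n/m}(B_{7/8})$ with the same bound, and this yields \eqref{eq:app_interior_lip}. The key point is that the a priori bound is what makes the quadratic term linear in the norm.

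\emph{Second estimate.} Now $Dv\in L^\infty(B_{7/8})$, so $F\in L^\infty$. Next I bootstrap once more. For $F$ to be Hölder I use $F=G(x,v,Dv)$ with $G\in C^1$ and the bounds \eqref{eq:app_DxG}--\eqref{eq:app_DAG}. I apply Lemma~\ref{le:app_W2m_in_unit_ball} with any large exponent $q$ to get $v\in W^{2,q}(B_{13/16})$, hence $Dv\in C^{0,\tau}$. Then $[F]_{C^{0,\tau}}\le C(\gamma_1|Dv|_\infty^2+\gamma_3|Dv|^3_\infty+\gamma_5|Dv|_\infty[Dv]_\tau)$, and this is $\le C\|Dv\|_{L^m}$ again thanks to the a priori bound $C'$. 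The Schauder estimate of Lemma~\ref{le:app_linear_schauder} applied on $B_{13/16}\to B_{1/2}$ then gives $v\in C^{2,\tau}(B_{1/2})$ and \eqref{eq:app_interior_C2}.

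\emph{Main obstacle.} The delicate point is keeping every bound linear in $\|Du\|_{L^m}$ rather than polynomial, and handling the $z$-dependence. The $z$-dependence only enters through $D_zG$ multiplied by $Dv$, so it is controlled. Linearity is obtained by using $\|Dv\|_{L^m}\le C'$ to absorb the extra powers, and by replacing $\|v\|$ with $\|v-(v)\|$ through Poincaré, which lower-order terms in the $W^{2,m}$ estimates require. Undoing the scaling produces exactly the powers $r^{-n/m}$ and $r^{-1-n/m}$.
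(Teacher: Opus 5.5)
\textbf{The scaling step is wrong, and it is the step that carries the lemma's uniformity in $r$.} Otherwise your strategy is the paper's: normalize to the unit ball, get $W^{2,m/2}$ (hence $C^{1,1-2n/m}$) from Lemma~\ref{le:app_W2m_in_unit_ball}, bound the H\"older seminorm of $G(\cdot,u,Du)$ via \eqref{eq:app_DxG}--\eqref{eq:app_DAG}, and conclude with Lemma~\ref{le:app_linear_schauder}, subtracting averages throughout.

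With $v(y)=r^{-1}u(x_0+ry)$ you have $Dv(y)=Du(x_0+ry)$, so
\[
\|Dv\|_{L^m(B_1)}=r^{-n/m}\|Du\|_{L^m(B_r(x_0))}\le C'r^{-1},
\]
not $\le C'$. For example, a $u$ with oscillation of order one on $B_r$ has $|Du|\sim 1/r$. This is allowed by \eqref{eq:app_apriori_bound_Du_Lm}, and then $|Dv|\sim 1/r$. You then drop the factor $r$ in front of $G$ (you write $|F|\le c_1|Dv|^2$) and use $\|Dv\|_{L^m}^2\le C'\|Dv\|_{L^m}$ to make the quadratic terms linear. As written, your constants therefore depend on $C'/r$ and blow up as $r\to0$. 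That is exactly the uniformity the lemma asserts, and it is needed when the lemma is applied on small balls in Proposition~\ref{prop:harmonic_replacement_interior}.

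\textbf{How to repair it.} There are two options.
\begin{itemize}
\item Keep your scaling but track the factors of $r$ it produces. The rescaled nonlinearity $rG(x_0+ry,rz,A)$ satisfies \eqref{eq:app_growth_G}--\eqref{eq:app_DAG} with constants $rc_1$, $r^2\gamma_1$, $r^2\gamma_3$, $r\gamma_5$. These factors exactly compensate the extra powers of $\|Dv\|\le C'/r$; for instance $rc_1\|Dv\|_{L^m}^2\le c_1C'\|Dv\|_{L^m}$ and $r^2\gamma_3\|Dv\|_\infty^3\le C\gamma_3C'^2\|Dv\|_{L^m}$.
\item Do as the paper does and do not rescale the target. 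Set $u_r(y):=u(x_0+ry)-\bar u$, so that $Du_r(y)=rDu(x_0+ry)$. Then $u_r$ solves the same system with $a_r(y)=a(x_0+ry)$ and $G_r(y,z,A)=r^2G(x_0+ry,z+\bar u,A/r)$. This $G_r$ satisfies \eqref{eq:app_growth_G}--\eqref{eq:app_DAG} with constants $c_1$, $r\gamma_1$, $\gamma_3$, $\gamma_5$, and \eqref{eq:app_apriori_bound_Du_Lm} becomes genuinely $\|Du_r\|_{L^m(B_1)}\le C'$. This is the natural scale-invariant form for a nonlinearity quadratic in the gradient.
\end{itemize}
With either correction the rest of your argument goes through as in the paper. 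Your extra $W^{2,q}$ step before the Schauder estimate is harmless but unnecessary: $\tau=1-2n/m$ from the first step suffices.
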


The proof is similar to the one of Lemma~\ref{eq:app_second_derivative_oscillation} below, but simpler, therefore we only sketch it.

\begin{proof}[Sketch of proof]
It suffices to show the result for $x_0 = 0$, $r=1$, and $u$ with average 0, and then apply it to
$u_r(x) := u(x_0+rx)-\bar u$,
with the same rescaling as in~\eqref{eq:A_system_quadratic_growth_rescaled} below.

The assertion follows from the usual bootstrap argument.
The quadratic growth condition \eqref{eq:app_growth_G} and
\eqref{eq:app_apriori_bound_Du_Lm} imply that
$F:=G(\cdot, u, Du)$ obeys $\|F\|_{L^{m/2}}\le C \|Du\|^2_{L^m}\le C \|Du\|_{L^m}$. From
Lemma~\ref{le:app_W2m_in_unit_ball} we obtain a bound for $u$ in $W^{2,m/2}(B_{3/4})$ and hence in $C^{1,\tau}(B_{3/4})$ with $\tau = 1 - {2n}/{m} > 0$. This proves~\eqref{eq:app_interior_lip}.

An explicit computation (see~\eqref{eq:hoelder_G} below), using \eqref{eq:app_growth_G}-\eqref{eq:app_DAG} and then \eqref{eq:app_apriori_bound_Du_Lm} to eliminate higher-order terms, leads to
$\|F\|_{C^{0,\tau}}\le C\|u\|_{C^{1,\tau}}$; the conclusion then follows from Lemma~\ref{le:app_linear_schauder}.
\end{proof}

\begin{lemma}  \label{eq:app_second_derivative_oscillation} Let $\Lambda' >0$ and $r \in (0,1]$.
Suppose that $a\in C^2( B_r;\R^{n\times n}_\sym)$ satisfies \eqref{eq:app_ellipticity_a}-\eqref{eq:app_C2_a} and that $G$ satisfies \eqref{eq:app_growth_G}-\eqref{eq:app_DAG}.
Suppose further that $u \in W^{1,\infty}(B_r; \R^d)$ is
a weak solution of  \eqref{eq:A_system_quadratic_growth} in $B_r$ and
\begin{equation}   \label{eq:app_smallness_w1infty}
r \| Du \|_{L^\infty(B_r)}  \le \Lambda',
\end{equation}
\begin{equation}\label{eq:u0in0}
u(0) = 0.
\end{equation}
Then $u \in C^2(B_{r/2}; \R^d)$ and, for every
 $s\in(1,\infty)$ and every
$A_0 \in \R^{d \times n}$ with
\begin{equation}\label{eqbda0rlstr}
r |A_0| \le \Lambda',
\end{equation}
we have
\begin{align}
\| Du - A_0 \|_{C^0(B_{3r/4})} \le &  \,  C  r^{-n/s} \| Du  - A_0\|_{L^s(B_{r})}  + Cr( |A_0| + |A_0|^2) ,  \label{eq:app_interior_lipc}  \\
\|D^2 u\|_{C^0(B_{r/2} )} \le   & \, \frac{C}{r} r^{-n/s}  \| Du -A_0 \|_{L^s(B_{r})}
+   C( A_0|   +    |A_0|^2).
 \label{eq:app_interior_C2_normal}
\end{align}
Here $C$ depends on $n$, $d$, $s$, $\Lambda'$, $K$, $K_1$, $K_2$, $c_1$, $\gamma_1$, $\gamma_3$,
and $\gamma_5$. 
\end{lemma}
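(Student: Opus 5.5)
The plan is to reduce to the normalized case $x_0=0$, $r=1$ by scaling, and then run a bootstrap. The bootstrap is carried out on $v(x):=u(x)-A_0x$ rather than on $u$, so that every estimate is expressed through $Du-A_0$. Set $u_r(x):=u(rx)$. Then $u_r$ solves an equation of the same form, with coefficients $a(r\cdot)$ and nonlinearity $G_r(x,z,A):=r^2G(rx,z,A/r)$. Since $r\le1$, these satisfy \eqref{eq:app_ellipticity_a}--\eqref{eq:app_C2_a} and \eqref{eq:app_growth_G}--\eqref{eq:app_DAG} with the same or smaller constants. Conditions \eqref{eq:app_smallness_w1infty} and \eqref{eqbda0rlstr} become $\|Du_r\|_\infty\le\Lambda'$ and $|rA_0|\le\Lambda'$. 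After this rescaling, the inequalities \eqref{eq:app_interior_lipc}--\eqref{eq:app_interior_C2_normal} are the $r=1$ statements with $A_0$ replaced by $rA_0$; the factors $r|A_0|$ and $r|A_0|^2\le |A_0|\,\Lambda'$ come out correctly.

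With $r=1$, I would write the equation for $v=u-A_0x$:
\begin{equation*}
-D_\beta(a^{\alpha\beta}D_\alpha v^i)=G^i(x,u,Du)+D_\beta a^{\alpha\beta}(A_0)_{i\alpha}=:F^i .
\end{equation*}
The key observation is that $|F|\le c_1|Du|^2+K_1|A_0|\le C\Lambda'|Du-A_0|+C(|A_0|+|A_0|^2)$. This holds because $|Du|^2\le 2|Du-A_0|^2+2|A_0|^2$ and $|Du-A_0|\le 2\Lambda'$. So $F$ is linear in $|Du-A_0|$ up to the additive term $C(|A_0|+|A_0|^2)$. Alternatively, I would write $G(x,u,Du)-G(x,u,A_0)$ as $b\,(Du-A_0)$ with $b$ bounded via \eqref{eq:app_DAG} and $\Lambda'$. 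That term goes to the left-hand side as in Lemma~\ref{le:app_W2m_in_unit_ball}, leaving the remainder $G(x,u,A_0)+Da\,A_0$, which is bounded by $C(|A_0|+|A_0|^2)$.

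Subtracting the average of $v$ and applying Poincaré, I would use Lemma~\ref{le:app_W2m_in_unit_ball} with $b^{i\alpha}_j$ as above, from $B_1$ to $B_{7/8}$, starting at $W^{1,s}$. This gives
\begin{equation*}
\|v-\bar v\|_{W^{2,m}(B_{7/8})}\le C\|Du-A_0\|_{L^s(B_1)}+C(|A_0|+|A_0|^2)
\end{equation*}
for some $m>n$. Morrey's embedding then yields \eqref{eq:app_interior_lipc}, and also a $C^{0,\tau}$ bound on $Dv$ with the same right-hand side, which I denote $E$.

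For the second derivatives I would use Schauder theory, and this is where I expect the main difficulty. The task is to show $\|F\|_{C^{0,\tau}(B_{7/8})}\le C E$, which means the Hölder seminorm of $x\mapsto G(x,u(x),Du(x))$ must be linear in $E$. I would split
\begin{equation*}
G(x,u,Du)=[G(x,u,Du)-G(x,u,A_0)]+G(x,u,A_0).
\end{equation*}
The second piece has Hölder seminorm at most $(\gamma_1+\gamma_3\Lambda')|A_0|^2$, by \eqref{eq:app_DxG}--\eqref{eq:app_DzG} and Lipschitz continuity of $u$. The first piece equals $\int_0^1 D_AG(x,u,A_0+t(Du-A_0))\,dt\,(Du-A_0)$. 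Its Hölder norm is bounded by $\|D_AG(\ldots)\|_{C^{0,\tau}}\|Du-A_0\|_{C^0}+\|D_AG\|_{C^0}[Du-A_0]_\tau$. The quantities $\|D_AG\|_{C^0}$ and $\|D_AG\|_{C^{0,\tau}}$ are bounded by constants depending on $\Lambda'$, using \eqref{eq:app_DAG}; for the Hölder modulus of $D_AG$ in $x,z$ the structure of $G$ is needed (in our application $G$ is quadratic in $A$, so $D_AG$ is linear in $A$ with Lipschitz coefficients). This part needs care. Lemma~\ref{le:app_linear_schauder} applied to $v$ then gives $\|D^2u\|_{C^0(B_{1/2})}=\|D^2v\|_{C^0(B_{1/2})}\le CE$, which after undoing the scaling is \eqref{eq:app_interior_C2_normal}.
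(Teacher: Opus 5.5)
Your architecture (rescaling to $r=1$, passing to $v=u-A_0x$, moving $b\,Dv$ with $b=\int_0^1 D_AG(\cdot,u,A_0+t(Du-A_0))\,dt$ to the left, then Lemma~\ref{le:app_W2m_in_unit_ball} plus Morrey for \eqref{eq:app_interior_lipc}, then Lemma~\ref{le:app_linear_schauder}) is the same as the paper's. However, the Schauder step as you set it up has a genuine gap. Splitting $G(x,u,Du)=b\,(Du-A_0)+G(x,u,A_0)$ forces you to bound $[b]_{C^{0,\tau}}$. That requires a H\"older modulus of $D_AG$ jointly in $(x,z,A)$, since the argument $A_0+t(Du(x)-A_0)$ also moves with $x$. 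The hypotheses \eqref{eq:app_growth_G}--\eqref{eq:app_DAG} only give $G\in C^1$ with pointwise size bounds on $D_xG$, $D_zG$, $D_AG$. So \eqref{eq:app_DAG} controls $\|b\|_{C^0}$ but not $\|b\|_{C^{0,\tau}}$. Invoking the quadratic structure of the application proves a different lemma, with a constant depending on data not in the stated list.

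The splitting is unnecessary. Apply the mean value inequality to $G$ along the segment from $(x,u(x),Du(x))$ to $(y,u(y),Du(y))$, as in \eqref{eq:hoelder_G}. On this segment $|A|\le\|Du\|_{C^0}$, so only the size bounds enter:
\[
|G(x,u(x),Du(x))-G(y,u(y),Du(y))|\le(\gamma_1+\gamma_3\Lambda')\|Du\|_{C^0}^2|x-y|+\gamma_5\|Du\|_{C^0}\,|Du(x)-Du(y)|.
\]
Then linearize exactly as you already did for $|F|$, using three facts:
\[
\|Du\|_{C^0}^2\le 2|A_0|^2+4\Lambda'\|Du-A_0\|_{C^0},\qquad \|Du\|_{C^0}\le\Lambda',\qquad [Du]_{\tau}=[Du-A_0]_{\tau}.
\]
Combined with your $C^{1,\tau}$ bound on $v$, this gives $\|G(\cdot,u,Du)\|_{C^{0,\tau}(B_{7/8})}\le CE$ without any continuity of $D_AG$, and Lemma~\ref{le:app_linear_schauder} finishes the argument.

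A smaller point concerns the rescaling. The terms linear in $A_0$ come only from $D_\beta a^{\alpha\beta}(A_0)_{i\alpha}$. Keep them with their factors $\|Da\|_{C^0}$ and $\|Da\|_{C^1}$, and use $\|Da_r\|_{C^1}\le r\|Da\|_{C^1}$ for $r\le1$. If you instead absorb them into $C(|A_0'|+|A_0'|^2)$ with $A_0'=rA_0$, undoing the scaling gives $C|A_0|/r$ in \eqref{eq:app_interior_C2_normal}. It also gives $C|A_0|$ instead of $Cr|A_0|$ in \eqref{eq:app_interior_lipc}. The inequality $r|A_0|^2\le\Lambda'|A_0|$ does not repair this.
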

The proof is also based on a standard bootstrap argument, as for Lemma \ref{le:C1_C2_estimate}; we give some more details here to clarify how the translation by $A_0$ and the scaling $r$ affect the argument.
\begin{proof}
By  \eqref{eq:app_smallness_w1infty} and
\eqref{eq:app_growth_G} we have $F:=G(\cdot, u, Du)\in L^\infty$, and Lemma~\ref{eq:app_w2q_estimates_first} applied componentwise
implies $u \in W^{2,q}_\loc(B_r;\R^d)$ for all $q < \infty$.

We start from the case $r=1$. The function $ w(x) := u(x) - A_0 x$
satisfies the equation
\begin{equation}  \label{eq:app_PDE_w}
 -D_\beta (a^{\alpha \beta} D_\alpha w^{i})(x) = G^{i}(x,u(x),Du(x)) +D_\beta a^{\alpha \beta}(x) (A_0)^{i}_\alpha.
 \end{equation}
 We rewrite it as
\begin{equation}
 \, -D_\beta (a^{\alpha \beta}(x) D_\alpha w^{i})(x)
= \,   b^{i\alpha}_j(x) D_\alpha w^j(x)  +   \hat f^i(x),
\end{equation}
where
$\hat f^i(x):=G^{i}(x,u(x),A_0) +D_\beta a^{\alpha \beta}(x) (A_0)^{i}_\alpha$
and
$$ b^{i\alpha}_j (x):= \int_0^1 \frac{\partial G^{i}}{\partial A^{j}_\alpha}(x,u(x),A_0 + t Dw(x)) \, dt.$$
By~\eqref{eq:app_growth_G} we obtain
$\|\hat f\|_{L^\infty}\le  C( |A_0|^2+\|Da\|_{C^0} |A_0|)$;
by~\eqref{eq:app_DAG}, \eqref{eq:app_smallness_w1infty}, and \eqref{eqbda0rlstr}
we obtain
$\|b^{i\alpha}_j(x)\|_{L^\infty} \le  C $.
Let $m:=2n$ and $ \tau := 1 - \frac{n}{m} =1/2$. Then  Lemma~\ref{le:app_W2m_in_unit_ball}, using
Poincar\'e's estimate and~\eqref{eq:u0in0}, implies
\begin{equation}  \label{eq:app_C1tau_w}
\|w\|_{C^{1,\tau}(B_{3/4})} \le C ( \|Du - A_0\|_{L^s(B_1)} + \|Da\|_{C^0} |A_0| +  |A_0|^2).
\end{equation}
With \eqref{eq:app_lipschitz_a} and
$Du-A_0=Dw$ we obtain \eqref{eq:app_interior_lipc} for $r=1$.

In order estimate the second derivatives
we shall use Schauder theory (Lemma~\ref{le:app_linear_schauder})
for~\eqref{eq:app_PDE_w}, viewed as linear elliptic PDE for $w$.
From $|Du|^2\le 2|A_0|^2+2|Dw|^2$,
\eqref{eq:app_smallness_w1infty}, \eqref{eqbda0rlstr}
and \eqref{eq:app_C1tau_w} we have
\begin{equation}\label{eqduc02}
 \|Du\|_{C^0(B_{3/4})}^2  \le  C (\|Du - A_0\|_{L^s(B_1)} + |A_0|^2),
\end{equation}
so that
\begin{equation}  \label{eq:app_rhs_w_linfty}
 \|G(\cdot, u, Du)\|_{C^0(B_{3/4})}  \le c_1 \|Du\|_{C^0(B_{3/4})}^2
 \le  C (\|Du - A_0\|_{L^s(B_1)} + |A_0|^2).
 \end{equation}
 We write
\begin{equation}\label{eq:hoelder_G}
\begin{split}
&  |G(x,u(x), Du(x)) - G(y,u(y),Du(y))| \\
\le & \|D_xG\|_\infty |x-y|+
\|D_zG\|_\infty |u(x)-u(y)|
+\|D_AG\|_\infty |Du(x)-Du(y)|
 \end{split}  \end{equation}
and estimate
\begin{align}& \,  [G(\cdot, u(\cdot), Du(\cdot)]_{C^{0,\tau}(B_{3/4})}   \nonumber \\
\le & \,  \gamma_1 \|Du\|_{C^0(B_{3/4})}^2
+ \gamma_3  \|Du\|_{C^0(B_{3/4})}^3 + \gamma_5   \|Du\|_{C^0(B_{3/4})} [Du]_{C^{0,\tau}(B_{3/4})}  \nonumber\\
\le & \,C \|Du\|_{C^0(B_{3/4})}^2  + C  [Du]_{C^{0,\tau}(B_{3/4})}
\nonumber  \\
\le & \,  C (\|Du - A_0\|_{L^s(B_1)}+  |A_0|^2 +\|Da\|_{C^0} |A_0|),
\label{eq:app_rhs_w_schauder}
\end{align}
where in the last step we used
\eqref{eq:app_C1tau_w} and~\eqref{eqduc02}.
Similarly,
\begin{equation} \label{eq:app_coefficients_schauder}
   \|D_\beta a^{\alpha \beta} (A_0)^{i}_\alpha\|_{C^{0,\tau}} \le C
   \|Da\|_{C^1}|A_0|.
   \end{equation}
Lemma~\ref{le:app_linear_schauder} then gives the
 bound
 \begin{equation}\label{eqD2wCt}
 \| w\|_{C^{2,\tau}(B_{1/2})} \le C (  \|Du - A_0\|_{L^s(B_1)}  + |A_0|^2 + \|Da\|_{C^1} |A_0|).
 \end{equation}
Using \eqref{eq:app_lipschitz_a}, \eqref{eq:app_C2_a}
 and
$Du-A_0=Dw$ we obtain \eqref{eq:app_interior_C2_normal} for $r=1$.

It remains to consider the case $r \in (0,1)$. Define $u_r: B_1 \to \R^d$ by
\begin{equation*}u_r(x) := u(rx),
\quad a^{\alpha \beta}_r(x) := a^{\alpha \beta}(rx), \quad G^{i}_r(x,z,A) := r^2 G^{i}(rx, z, \frac1r A).
\end{equation*}
 Then $D u_r(x) = r Du(rx)$ and thus $u_r$ is a weak solution of
 \begin{equation}\label{eq:A_system_quadratic_growth_rescaled}
-D_\beta(  a_r^{\alpha \beta}(x) D_\alpha u_r^{i})(x) = G^{i}_r(x, u_r(x), Du_r(x)).
\end{equation}
The coefficients $a^{\alpha \beta}_r$ and the functions $G_r$ satisfy the conditions
\eqref{eq:app_ellipticity_a}-\eqref{eq:app_lipschitz_a} and \eqref{eq:app_growth_G}-\eqref{eq:app_DAG},
with the same values of  $\lambda$, $K$, $K_1$, $K_2$, $c_1$, $\gamma_1$, $\gamma_3$, $\gamma_5$.

 Application of  \eqref{eq:app_C1tau_w} and \eqref{eqD2wCt}
  with $A_0$ replaced by $r A_0$ gives
 \begin{equation*}\begin{split}
 \| Du_r - r A_0\|_{C^0(B_{3/4})}  &\le   C \| Du_r - r A_0 \|_{L^s(B_{1})}
 +  \, C(  \|Da_r\|_{C^0} |A_0| r +   |A_0|^2 r^2), \\
 \|D^2 u_r\|_{C^0(B_{1/2} )}  & \le      C \| Du_r - r A_0 \|_{L^s(B_{1})}
 +  \, C(  \|Da_r\|_{C^1} |A_0| r +   |A_0|^2 r^2).
 \end{split}\end{equation*}
It suffices to insert $  \| Du_r - r A_0 \|_{L^s(B_{1})} = r^{1-n/s} \| Du - A_0\|_{L^s(B_r)}$,
 $Du_r(x) = r Du(rx)$ and $D^2 u_r(x) = r^2 Du(r x)$, and the same for $a_r$, to conclude.
\end{proof}

\section{Construction of a local harmonic approximation}\label{sec:solharmmap}
We show here that, if $r$ is sufficiently small, 
equation \eqref{eqforZGamma}, which is
\begin{equation}\label{eq:harmonic_extension_interiortwo}
-L_g Z_k(x) =
\Xi_k(x, Z(x), \Enabla Z(x))
-\partial_i  H_{ki}(x)- H'_k(x),
\end{equation}
with $\Xi$ as in \eqref{eqdefXi},
has a solution $Z\in W^{1,m}_0(B_r;\R^n)$
which can be bounded in terms of $H$ and $H'$.
\begin{proposition}   \label{prop:harmonic_replacement_interior} Let   $m \in (2n, \infty)$, $p \in (1,\infty)$ with $ m \ge p$
and  $\frac1m + \frac1p < 1$.
Let $r_0>0$, $C_0>0$, $\Lambda\ge 1$.
Then  there exist $r_1\in (0, r_0]$ and $C_1 > 0$ with the following property.

Let $g\in C^\infty(B_{r_0};\R^{n\times n}_{+})$
and assume that
$\Xi:B_{r_0}\times B_{r_0\Lambda}\times \R^{n\times n}\to\R^n$
is measurable and obeys
\begin{equation}\label{eqHaAbBlip}
 \begin{split}
|\Xi(x,a,A)-\Xi(x,b,B)|\le&
  C_0 |a-b|(1+|A|^2+|B|^2)\\
  &
 +C_0 |A-B|(1+|A|+|B|),\\
 |\Xi(x,a,0)|\le & C_0|a|
\end{split}\end{equation}
for all $x, a,b,A,B$, and
\begin{equation}\label{eqgC0}
 \frac1{C_0}\Id \le g(x)\le C_0 \Id, \hskip5mm
 |Dg|(x)+|D^2g|(x)\le C_0
\end{equation}
for all $x$.
If for some $r\in (0, r_1]$, $\Herror: B_{r}\to\R^{n\times n}$, $\Hprimeerror: B_{r}\to\R^{n}$ obey
\begin{equation}\label{eqassHHstrich}
\| H\|_{L^m(B_r)}+r \| H'\|_{L^m(B_r)}  \le \Lambda r^{n/m},
\end{equation}
then the  equation~\eqref{eq:harmonic_extension_interiortwo} has a unique solution in
\begin{equation}\label{eqdefEr}
E_r:=\{Z \in  W_0^{1,m}(B_r;\R^n)  : \| \Enabla Z\|_{L^m(B_r)} \le 1\}.
\end{equation}
This solution satisfies
\begin{equation}\label{eqlpboundlinfty}
\|\Enabla Z\|_{L^\infty(B_{r/2})}\le C_1\Lambda
\end{equation}
and
\begin{equation}\label{eqLpboundnablaZh}
\|\Enabla Z\|_{L^{\ell}{(B_r)}} \le C_1  \|\Herror\|_{L^\ell(B_r)} +  C_1 r \|\Hprimeerror\|_{L^\ell(B_r)}  \quad \text{for all $\ell \in [p,m]$}.
\end{equation}
\end{proposition}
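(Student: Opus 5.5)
The plan is to prove existence and uniqueness by the Banach fixed-point theorem on $E_r$, and then to read off the two estimates. First I would rewrite $-L_g$ in divergence form. Multiplying \eqref{eq:harmonic_extension_interiortwo} by $\sqrt{\det g}$ gives
\[
-D_\beta(a^{\alpha\beta}D_\alpha Z_k)=\sqrt{\det g}\,\bigl(\Xi_k(\cdot,Z,DZ)-H'_k\bigr)-\sqrt{\det g}\,\partial_iH_{ki},
\]
with $a=\sqrt{\det g}\,g^{-1}$. The last term equals $-\partial_i(\sqrt{\det g}H_{ki})+H_{ki}\partial_i\sqrt{\det g}$, so it is a divergence plus an $L^m$ term.

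Next I define $T:E_r\to W^{1,m}_0(B_r;\R^n)$. For $W\in E_r$, let $T(W)$ be the solution given by Lemma~\ref{le:w1p_dirichlet} on $B_r$, with right-hand side $F=\sqrt{\det g}(\Xi(\cdot,W,DW)-H')+$(lower-order terms in $H$) and $f=\sqrt{\det g}H$. The Lemma is applied with exponent $m$ for the gradient and $s=m/2$ for $F$; this is admissible because $\frac2m\le\frac1m+\frac1n$ holds since $m>n$. The constants must be uniform in $r$. I obtain this by rescaling to $B_1$ via $x\mapsto rx$. In that scaling $\|DZ\|_{L^m(B_r)}$ transforms like $r^{n/m-1}$ times a unit-ball quantity, $F$ picks up a factor $r$ relative to $f$, and the rescaled coefficients still satisfy \eqref{eqgC0} with a modulus of continuity that only improves. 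In scaled form the estimate reads
\[
\|DT(W)\|_{L^m(B_r)}\le C\|H\|_{L^m}+C r^{1+n/m-n/s}\|F\|_{L^{s}},
\]
where for $s=m/2$ the power is $r^{1-n/m}$.

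I then check that $T$ maps $E_r$ into itself and is a contraction. Since $W\in W^{1,m}_0$ with $m>n$, Morrey gives $\|W\|_\infty\le Cr^{1-n/m}$, so $W$ takes values in $B_{r_0\Lambda}$ once $r$ is small. By \eqref{eqHaAbBlip},
\[
|\Xi(x,W,DW)|\le C_0|W|(1+|DW|^2)+C_0|DW|(1+|DW|),
\]
hence $\|\Xi\|_{L^{m/2}}\le C(r^{1-n/m}+1)\|DW\|_{L^m}^2+C\|DW\|_{L^m}r^{n/m}+\dots$. Together with \eqref{eqassHHstrich} this yields $\|DT(W)\|_{L^m}\le C\Lambda r^{n/m}+Cr^{1-n/m}(\ldots)\le 1$ for $r\le r_1$. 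The contraction follows the same way: $|\Xi(W_1)-\Xi(W_2)|$ is bounded by $C|W_1-W_2|(1+|DW_1|^2+|DW_2|^2)+C|DW_1-DW_2|(1+|DW_1|+|DW_2|)$, which leads to $\|D(TW_1-TW_2)\|_{L^m}\le C(r^{1-n/m}+\dots)\|D(W_1-W_2)\|_{L^m}$. The small factor $r^{1-n/m}$ (or $r^{1-2n/m}$, positive since $m>2n$) makes $T$ a contraction. Banach's theorem then gives existence and uniqueness in $E_r$.

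For \eqref{eqLpboundnablaZh} I apply Lemma~\ref{le:w1p_dirichlet} again, now with exponent $\ell\in[p,m]$. The nonlinear term is absorbed: $\|\Xi\|$ is controlled in $L^{s}$ with $\frac1s=\frac1\ell+\frac1m$ (this requires $\frac1s<1$, which holds because $\frac1p+\frac1m<1$, and $\frac1s\le\frac1\ell+\frac1n$). Using $|\Xi(x,Z,DZ)|\le C|Z|+C|DZ|(1+|DZ|)$, the quadratic part is bounded by $\|DZ\|_{L^\ell}(r^{n/m}+\|DZ\|_{L^m})$, and $|Z|$ is handled by Poincar\'e with a factor $r$. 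Since $\|DZ\|_{L^m}\le1$ is not small, I expect this step to be the main obstacle. I would first try to obtain smallness from the fixed-point bound $\|DZ\|_{L^m}\le C\Lambda r^{n/m}$ together with the scale-invariant factor. If that fails, I would instead linearize $\Xi(Z,DZ)=b\cdot DZ+c\,Z$ with $b\in L^m$ and use the smallness of $r^{1-n/m}\|b\|_{L^m}$ in the rescaled $W^{1,\ell}$ estimate, which gives $\|DZ\|_{L^\ell}\le C\|H\|_{L^\ell}+Cr\|H'\|_{L^\ell}$.

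Finally, \eqref{eqlpboundlinfty} follows from interior regularity. By Lemma~\ref{le:C1_C2_estimate}, the scaled bound $\|DZ\|_{L^m}\le C\Lambda r^{n/m}$ and the bound on $H$ in $L^m$ make the rescaled right-hand side bounded. A bootstrap through Lemma~\ref{le:app_W2m_in_unit_ball} then gives $C^{1,\tau}$ bounds on $B_{r/2}$, and hence $|DZ|\le C_1\Lambda$ there. This is uniform because Lemma~\ref{le:app_W2m_in_unit_ball} is invariant under the scaling.
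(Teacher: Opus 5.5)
Your existence and uniqueness argument is correct and is essentially the paper's. It is a contraction on $E_r$ with the quadratic term placed in $L^{m/2}$, and the smallness comes from the factor $r^{1-n/m}$. You obtain that factor by rescaling Lemma~\ref{le:w1p_dirichlet} with $s=m/2$; the paper obtains it from $W^{2,\alpha}$ bounds and the embedding $W^{1,\alpha}\hookrightarrow L^m$, $\frac1\alpha=\frac1m+\frac1n$. Your $W^{1,\ell}$ step also works and needs no fallback. Your own scaled estimate (with $\frac1s=\frac1\ell+\frac1m$) already puts $r^{1+n/\ell-n/s}=r^{1-n/m}$ in front of $\|\Xi(\cdot,Z,\Enabla Z)\|_{L^s}\le C(r^{n/m}+\|\Enabla Z\|_{L^m})\|\Enabla Z\|_{L^\ell}\le C\|\Enabla Z\|_{L^\ell}$, so this term is absorbed once $r_1$ is small.

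The genuine gap is your last step. Lemma~\ref{le:C1_C2_estimate} needs a right-hand side of the form $G(x,u,\Enabla u)$ with $|G|\le c_1|\Enabla u|^2$, and the bootstrap through Lemma~\ref{le:app_W2m_in_unit_ball} needs a right-hand side in $L^m$. Your equation for $Z$ contains $\Div\Herror$ with $\Herror$ only in $L^m$, which is merely an element of $W^{-1,m}$. The $L^m$ bound on $\Herror$ does not make the rescaled right-hand side bounded, and elliptic regularity gives $\Enabla Z\in L^m$ and nothing better.

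In fact \eqref{eqlpboundlinfty} cannot be derived from the stated hypotheses at all. Take $g=\Id$, $\Xi\equiv0$, $\Hprimeerror=0$ and $\Herror_{ki}=\delta_{k1}\partial_i\phi$ with $\phi=c\,\zeta(x)|x|^\theta$. Here $\theta\in(1-\frac nm,1)$, $\zeta\in C^\infty_c(B_r)$ equals $1$ near $0$, and $c>0$ is small. Then \eqref{eqassHHstrich} holds, $Z=\phi e_1\in E_r$ is the solution, and $\Enabla Z$ is unbounded near $0$. The paper's own Step~3 passes over the same point: it applies Lemma~\ref{le:C1_C2_estimate} to $u=Z$ with $G=\sqrt{\det g}\,\Xi$, dropping $-\Div\Herror-\Hprimeerror$ and the non-quadratic part of $\Xi$.

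What can be proved, and what Section~\ref{secmainproof} actually uses, relies on extra structure. When $Z$ solves \eqref{eqforZGamma}, $\widetilde F=F+Z$ solves the harmonic map system. Its right-hand side $\sqrt{\det g}\,Q(x,\widetilde F)(\Enabla\widetilde F,\Enabla\widetilde F)$, after a cutoff in the target variable, satisfies \eqref{eq:app_growth_G} and contains no $\Herror$ terms. In the interior case, $\|\Enabla\widetilde F\|_{L^m(B_r)}\le\|\Enabla F\|_{L^m(B_r)}+\|\Enabla Z\|_{L^m(B_r)}\le C\Lambda r^{n/m}$. Applying Lemma~\ref{le:C1_C2_estimate} to $\widetilde F$ instead of $Z$ then bounds $\Enabla\widetilde F$ on $B_{3r/4}$. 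Finally $\Enabla Z=\Enabla\widetilde F-\Enabla F$ is bounded because $F$ is Lipschitz. So your last step must either add a hypothesis of this kind (or Hölder regularity of $\Herror,\Hprimeerror$) or be moved into that setting.
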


\begin{proof} 
\newcommand\Enorm[1]{\|#1\|_{E_r}}
\emph{Step 1: Existence of $Z$.}\\
For the purpose of this proof we set
\begin{equation}\label{eqdefEnorm}
\Enorm{Z}:= \| \Enabla Z\|_{L^m(B_r)} \quad \text{for $Z \in W^{1,m}_0(B_r;\R^n)$.}
\end{equation}
We show existence (for sufficiently small $r$)  by an application of the Banach fixed point theorem
to the operator $T$ defined by
$$ T(Z) := (-L_g)^{-1}    \left[
\Xi(\cdot,Z, \Enabla Z)
-\Div H - H' \right]$$
which is diagonal, in the sense that
$$ T_k(Z) = (-L_g)^{-1}    \left[
\Xi_k(\cdot,Z, \Enabla Z)
- (\Div H)_k -  H'_k \right],$$
in  the set $E_r$ defined in \eqref{eqdefEr},
with respect to the norm defined in \eqref{eqdefEnorm}. For any $Z\in E_r$ we have
\begin{equation}\label{eqZLinftyE}
 \|Z\|_{L^\infty}\le C r^{1-\frac nm}\Enorm{Z}\le  C r^{1-\frac nm},
\end{equation}
therefore if $Cr^{1-\frac nm}< r_0\Lambda$ the
function $\Xi(x, Z, \Enabla Z)$ is well defined for $Z\in E_r$.

The operator $(-L_g)^{-1} $ is a linear, bounded map from $L^s(B_r)$ to $W^{2,s}\cap W^{1,s}_0(B_r)$
which extends to a bounded map from $W^{-1,s}(B_r)
:=(W^{1,s'}_0(B_r))'$ (with the homogeneous norm) to $W^{1,s}_0(B_r)$,
for any $s\in (1,\infty)$. The map does not depend on $s$, the bounds depend on $s$ but not on $r$.
Specifically, we have, for some $C(s)$ which depends only on $s$, $n$, and $C_0$,
\begin{equation}\label{eqellipticLgWmus}
 \|\Enabla (-L_g)^{-1}u\|_{L^s(B_r)} \le C(s)
 \|u\|_{W^{-1,s}(B_r)}
\end{equation}
for all $u\in W^{-1,s}(B_r)$ (see Lemma~\ref{lemmanablaLgW1ms})
and
\begin{equation}\label{eqestimateLgw2s}
 \|\Enabla^2 (-L_g)^{-1}u\|_{L^s(B_r)} \le C(s)
 \|u\|_{L^{s}(B_r)}
\end{equation}
for all $u\in L^s(B_r)$ (see Lemma~\ref{lemmanablaLgW2s}).
We define $\alpha\in(1,m)$  by
$$ \frac1\alpha := \frac1m + \frac1n;
$$
for some
$\beta=\beta(\alpha,m)>0$
by H\"older's inequality
\begin{equation}  \label{eq:hoelder_q_half_m}
\| u\|_{L^\alpha} \le C r^{n \beta} \|u \|_{L^{m/2}}.
\end{equation}
From \eqref{eqestimateLgw2s} with $s=m$  we obtain
\begin{equation}  \label{eq:Lq_estimate2}
\Enorm{\Enabla (-L_g)^{-1} u}
=
\|\Enabla^2 (-L_g)^{-1} u\|_{L^m} \le C  \|u \|_{L^m};
\end{equation}
similarly using
\eqref{eqestimateLgw2s} with $s=\alpha$ and
the critical Sobolev embedding $W^{1,\alpha} \hookrightarrow L^m$,
\begin{equation}  \label{eq:Lq_estimate}
\Enorm{(-L_g)^{-1} u} =
\|\Enabla (-L_g)^{-1} u\|_{L^m}
\le
C \|\Enabla^2 (-L_g)^{-1} u\|_{L^\alpha}
\le C \|u \|_{L^\alpha}.
\end{equation}
From \eqref{eqestimateLgw2s} with $s=m$
and Poincar\'e's inequality, which we can apply since $\Enabla (-L_g)^{-1} H'$ has average zero, we obtain
\begin{equation}\label{eqLgHlm}
\|\Enabla (-L_g)^{-1} H'\|_{L^m}
\le
Cr \|\Enabla^2 (-L_g)^{-1} H'\|_{L^m}
\le Cr \|H' \|_{L^m}.
\end{equation}
Since $T(0)=(-L_g)^{-1}(-\Div H-H')$,
using \eqref{eqLgHlm} and
\eqref{eqellipticLgWmus} with $u=-\Div H$ gives
\begin{equation}\label{eqenormt0lm}
\Enorm{T(0)}
 \le C  \|H\|_{L^m(B_r)}
 + C r \|H'\|_{L^m(B_r)}.
\end{equation}
Recalling \eqref{eqassHHstrich},
we obtain
$\Enorm{ T(0) } \le
C \Lambda r_1^{{n}/{m}}$, therefore if $r_1$ is sufficiently small
\begin{equation}  \label{eq:estimate_T_zero}
\Enorm{ T(0) } \le
\frac12.
\end{equation}
To show the existence of a fixed point in $E_r$
it thus suffices to
show that for $r_1\in(0, 1\wedge r_0]$ sufficiently small one can obtain, for any $r\in (0, r_1]$, $Z$, $Z' \in E_r$ the bound
\begin{equation}  \label{eq:contraction_boundary_fixed}
\Enorm{ T(Z) - T(Z') } \le \frac12  \Enorm{Z - Z'}.
\end{equation}
Using \eqref{eqHaAbBlip},
\begin{equation*}\begin{split}
&\hskip-1cm\|  \Xi(\cdot, Z(\cdot), \Enabla Z(\cdot)) -   \Xi(\cdot, Z'(\cdot), \Enabla Z'(\cdot)) \|_{L^{m/2}(B_r)}\\
\le&  C \|Z-Z'\|_{L^\infty(B_r)} (r^{2n/m} + \|\Enabla Z\|_{L^m(B_r)}^2+\|\Enabla Z'\|_{L^m(B_r)}^2)\\
&+ C  \|\Enabla Z-\Enabla Z'\|_{L^m(B_r)}
(r^{n/m}+ \|\Enabla Z\|_{L^m(B_r)}+\|\Enabla Z'\|_{L^m(B_r)}).
\end{split}
\end{equation*}
{Recalling that $r\le r_1\le r_0$,}
and using
the embedding $W_0^{1,m} \hookrightarrow L^\infty$ we get
$$ \|  \Xi(\cdot, Z(\cdot), \Enabla Z(\cdot)) -   \Xi(\cdot, Z'(\cdot), \Enabla Z'(\cdot)) \|_{L^{m/2}(B_r)}
\le C   \Enorm{Z - Z'}.$$
Hence with   \eqref{eq:hoelder_q_half_m} and \eqref{eq:Lq_estimate} we get
 $$   \Enorm{ T(Z) - T(Z') } \le
 C r_1^{n\beta} \Enorm{Z - Z'}.$$
Thus  \eqref{eq:contraction_boundary_fixed} follows for $r_1 > 0$ sufficiently small.

It follows from  \eqref{eq:estimate_T_zero} and  \eqref{eq:contraction_boundary_fixed}
that
for any $r\in(0,r_1]$ the map
$T$ has a unique fixed point $Z_*$ in $E_r$, which satisfies
$\|Z_*\|_{E_r}=\|T(Z_*)\|_{E_r}\le \|T(0)\|_{E_r}+\|T(Z_*)-T(0)\|_{E_r}$ and therefore
with~\eqref{eqenormt0lm} and~\eqref{eqassHHstrich}
\begin{equation}  \label{eq:W1m_bound_fixed_boundary}
\|  \Enabla Z_* \|_{L^m(B_r)}  \le 2 \|  T(0)\|_{E_r}
 \le C  \|H\|_{L^m(B_r)}
 + C r\|H'\|_{L^m(B_r)} \le C \Lambda r^{\frac{n}{m}}.
\end{equation}

\medskip

\emph{Step 2: Bound for the $W^{1,\ell}$ norm.}\\
We prove the bound \eqref{eqLpboundnablaZh} for the $W^{1,\ell}$ norm of $Z_*$ in terms of the $L^\ell$ norm of $H$ and $H'$. We denote here $Z_*$ simply by $Z$, all norm in this step are taken on $B_r$.
Analogously to~\eqref{eq:Lq_estimate2}-\eqref{eqLgHlm},
from Poincar\'e's inequality
and \eqref{eqestimateLgw2s} with $s=\ell$
we obtain
\begin{equation}
\|\Enabla (-L_g)^{-1} H'\|_{L^\ell}
\le
Cr \|\Enabla^2 (-L_g)^{-1} H'\|_{L^\ell}
\le Cr \|H' \|_{L^\ell}.
\end{equation}
Similarly, using \eqref{eqellipticLgWmus} with $s=\ell$ gives
\begin{equation}
 \|\Enabla (-L_g)^{-1}\Div H\|_{L^\ell}
\le C \|\Div H\|_{W^{-1,\ell}}
 \le C \|H\|_{L^\ell}.
\end{equation}
Therefore
$$ \| \Enabla T(0)\|_{L^\ell} \le C
  \|H\|_{L^\ell}
 + Cr \|H'\|_{L^\ell}
.$$
Since $Z=T(Z)$,
to obtain  \eqref{eqLpboundnablaZh}
it suffices to show that
\begin{equation} \label{eq:contraction_p}
 \| \Enabla T(Z) - \Enabla T(0)\|_{L^\ell} \le \frac12 \| \Enabla Z\|_{L^\ell} \quad \text{for all $Z \in E_r$.}
 \end{equation}
 Using  \eqref{eqHaAbBlip} with $b=0$, $B=0$ and the fact that $ \Xi(x,0,0) = 0$ we get
 \begin{equation}\label{eqestHZDZ}
| \Xi(x,Z(x), \Enabla Z(x))| \le C|Z|(x)(1+|\Enabla Z(x)|^2)+C|\Enabla Z(x)|(1 + |\Enabla Z(x)|).
\end{equation}
For any $s\in [1,\infty)$,
using ${\|Z\|_{L^\infty}}\le Cr^{1-n/m}\Enorm{Z}$,
 Poincar\'e's inequality in $W^{1,s}_0$, and $r\le1$,
for $Z \in E_r$ we obtain
\begin{equation*}\begin{split}
\| \Xi(\cdot,Z(\cdot), \Enabla Z(\cdot))\|_{L^s} \le&
C\|Z\|_{L^s}
+ C \|\Enabla Z\|_{L^{s}}
+ C \|\Enabla Z\|_{L^{2s}}^2
\\
\le & C \|\Enabla Z\|_{L^{s}}
+ C \|\Enabla Z\|_{L^{2s}}^2.
\end{split}\end{equation*}
The key difficulty is treating the quadratic term; on the positive side we have a uniform bound in $L^m$ and the fact that the operator gains one derivative.
We  choose the exponent  $s$ such that
\begin{equation}\label{eqchoices}
1<s<n,
\hskip5mm
s<\ell<s^*:=\frac{ns}{n-s},
\hskip5mm
\frac1s > \frac1\ell+\frac1m.
\end{equation}
This is possible since $1<n<m$,
$p\le \ell$, and
$\frac1p+\frac1m<1$,
imply that
one can choose $s$ with
\begin{equation*}
 \frac1n \vee \left(\frac1\ell+\frac1m\right)<\frac1s<1\wedge \left(\frac1\ell+\frac1n\right),
\end{equation*}
which is equivalent to \eqref{eqchoices} (the inequality $s<\ell$ follows from the last one in \eqref{eqchoices}).

Using \eqref{eqchoices},
Hölder's inequality, and $r\le 1$ we obtain
\begin{equation}
\|\Enabla Z\|_{L^{s}}\le Cr^\gamma \|\Enabla Z\|_{L^{\ell}}
\text{ and }\|\Enabla Z\|_{L^{2s}}^2\le
Cr^{\gamma} \|\Enabla Z\|_{L^{\ell}}\|\Enabla Z\|_{L^{m}}
\end{equation}
for some $\gamma>0$ that depends only on $s$, $n$, $\ell$, $m$. Therefore for any $Z\in E_r$
\begin{equation*}\begin{split}
\| \Xi(\cdot,Z(\cdot), \Enabla Z(\cdot))\|_{L^s} \le & C r^{\gamma} \|\Enabla Z\|_{L^{\ell}}.
\end{split}\end{equation*}
At this point we use \eqref{eqestimateLgw2s} and the embedding $L^\ell\subset
L^{s^*}\subset W^{1,s}$
and obtain
\begin{equation*}\begin{split}
  \| \Enabla T(Z) - \Enabla T(0)\|_{L^\ell} \le& Cr^{\gamma'}
 \| \Enabla^2(-L_g)^{-1} \Xi(\cdot,Z(\cdot), \Enabla Z(\cdot))
  \|_{L^s} \\
  \le&Cr^{\gamma'}
  \|  \Xi(\cdot,Z(\cdot), \Enabla Z(\cdot))
  \|_{L^s} \le
  C r_1^{\gamma
+\gamma'}  \| \Enabla Z\|_{L^\ell}
\end{split}\end{equation*}
for some $\gamma'>0$.
Therefore for sufficiently small $r_1>0$ the condition \eqref{eq:contraction_p} holds, and \eqref{eqLpboundnablaZh} follows.

\emph{Step 3: Bound for the $L^{\infty}$ norm.}\\
We prove
\eqref{eqlpboundlinfty} using
Lemma~\ref{le:C1_C2_estimate}.
We set $C'=1$, $u=Z$, $a=\sqrt{\det g} g$, $G=\sqrt{\det g}\Xi$, $d=n$.
By~\eqref{eqgC0}, $a$ satisfies
\eqref{eq:app_ellipticity_a}-\eqref{eq:app_C2_a}  with constants that depend only on $C_0$.
Since $r_1\le 1$, $n/m<1$, and
$Z\in E_r$,  \eqref{eq:app_apriori_bound_Du_Lm} follows.
By~\eqref{eqassHHstrich} and~\eqref{eqLpboundnablaZh} with $\ell=m$, we obtain
\begin{equation}
 \|DZ\|_{L^m(B_r)}\le C_1 \Lambda r^{n/m}.
\end{equation}
By Lemma~\ref{le:C1_C2_estimate} used on the ball $B_r$,  we obtain
that \begin{equation}\label{eqlinftybound}
      \|DZ\|_{L^\infty(B_{3r/4})}
      \le C_2\Lambda,
     \end{equation}
with $C_2$ depending only on the parameters.
This proves~\eqref{eqlpboundlinfty}.
\end{proof}

In closing, we show that the differential operator $-L_g$ is invertible, with uniform estimates of the form given in
\eqref{eqellipticLgWmus}-\eqref{eqestimateLgw2s}. For the use above it is important that the bounds depend only on the exponent, the dimension, and the constant $C_0$ entering \eqref{eqgC0}, but not on the function $g$, so that the dependence on $r$ becomes explicit.
We intend to solve an equation of the form
\begin{equation}\label{eqLgufinalpart1}
-L_g u(x) =
-\partial_i  H_{ki}(x)- H'_k(x),
\end{equation}
where $L_g$ was defined in \eqref{eqdefLgfromg}, which means that
\begin{equation}
- \frac1{\sqrt {\det g(x)}}\partial_i (\sqrt{\det g(x)} g^{ij}(x) \partial_j  u(x))=-\partial_i  H_{ki}(x)- H'_k(x).
\end{equation}
Rearranging terms this is the same as
\begin{equation}\label{eqLgufinalpart3}
\begin{split}
- \partial_i (\sqrt{\det g(x)} g^{ij}(x) \partial_j  u(x))=&-\partial_i  ({\sqrt{\det g(x)}}H_{ki}(x))
+H_{ki}(x) \partial_i  {\sqrt{\det g(x)}}\\
&- {\sqrt{\det g(x)}} H'_k(x).
\end{split}
\end{equation}

\begin{lemma}\label{lemmanablaLgW1ms}
 For $C_0>0$, $s\in (1,\infty)$, there is $C>0$ such that
 for any $r\in(0,1]$, any $g\in C^2(B_r)$ which obeys~\eqref{eqgC0} in $B_r$, one has 
\begin{equation}\label{eqellipticLgWmus1}
 \|\Enabla (-L_g)^{-1}u\|_{L^s(B_r)} \le C(s)
 \|u\|_{W^{-1,s}(B_r)}
\end{equation}
for all $u\in W^{-1,s}(B_r)$.
\end{lemma}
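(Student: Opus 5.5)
The plan is to reduce the estimate to the unit ball by scaling, and then to use the uniform global $W^{1,s}$ theory of Lemma~\ref{le:w1p_dirichlet} for divergence-form operators.

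\emph{Reduction to divergence form.} Writing $a^{ij}:=\sqrt{\det g}\,g^{ij}$, the equation $-L_g v=u$ with $v\in W^{1,s}_0(B_r)$ is equivalent to $-\partial_i(a^{ij}\partial_j v)=\sqrt{\det g}\,u$. Here the multiplication by $\sqrt{\det g}$ is interpreted on distributions in $W^{-1,s}$. The key observation is that multiplication by a Lipschitz function $\phi$ is bounded on $W^{1,s'}_0(B_r)$ with the homogeneous norm, uniformly in $r\le1$. Indeed, $\|D(\phi\xi)\|_{L^{s'}}\le \|\phi\|_\infty\|D\xi\|_{L^{s'}}+\|D\phi\|_\infty\|\xi\|_{L^{s'}}$, and Poincar\'e gives $\|\xi\|_{L^{s'}}\le Cr\|D\xi\|_{L^{s'}}$. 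By duality, $\|\sqrt{\det g}\,u\|_{W^{-1,s}}\le C\|u\|_{W^{-1,s}}$, with $C$ depending only on $C_0$ and $n$.

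\emph{Representation of $W^{-1,s}$ elements.} Every $w\in W^{-1,s}(B_r)$ can be written as $w=-\Div f$ with $f\in L^s(B_r;\R^n)$ and $\|f\|_{L^s}\le C\|w\|_{W^{-1,s}}$. This follows from Hahn–Banach: the map $\xi\mapsto D\xi$ embeds $W^{1,s'}_0$ isometrically into $L^{s'}$, so a functional extends with the same norm and is represented by some $f\in L^s$. The constant here is simply $1$, independent of $r$.

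\emph{Uniform estimate via scaling.} We then need to solve $-\partial_i(a^{ij}\partial_jv)=-\partial_i f^i$ in $W^{1,s}_0(B_r)$ with $\|Dv\|_{L^s(B_r)}\le C\|f\|_{L^s(B_r)}$. Set $v_r(x):=v(rx)/r$, $a_r(x):=a(rx)$ and $f_r(x):=f(rx)$ on $B_1$. Then $Dv_r(x)=Dv(rx)$, the equation is preserved, and both sides scale by the same factor $r^{-n/s}$. The coefficients $a_r$ satisfy \eqref{eq:app_ellipticity_a} and \eqref{eq:app_boundedness_a} with constants depending only on $C_0$, and since $|Da_r|\le r|Da|\le C$ they are uniformly Lipschitz. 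This gives a uniform modulus of continuity. Lemma~\ref{le:w1p_dirichlet} on the fixed smooth domain $B_1$ (with $F=0$) therefore yields existence, uniqueness, and the bound with a constant depending only on $s$, $n$ and $C_0$. Scaling back gives \eqref{eqellipticLgWmus1}.

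\emph{Consistency and main obstacle.} Uniqueness in $W^{1,s}_0$ shows that the solution operator is independent of $s$ and agrees with $(-L_g)^{-1}$ on $L^s\subset W^{-1,s}$. The delicate point is uniformity in both $r$ and $g$. This rests on the scaling argument, on the fact that the constant in Lemma~\ref{le:w1p_dirichlet} depends on the coefficients only through $\lambda$, $K$ and the modulus of continuity, and on the $r$-uniform multiplier bound in the first step, which uses Poincar\'e.
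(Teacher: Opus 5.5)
Your proof is correct and takes essentially the same route as the paper: rescale to $r=1$, write the $W^{-1,s}$ datum as the divergence of an $L^s$ field, and apply Lemma~\ref{le:w1p_dirichlet} to the coefficients $\sqrt{\det g}\,g^{-1}$, whose Lipschitz bound (and hence modulus of continuity) is uniform after rescaling with $r\le 1$. The only difference is minor. You absorb the factor $\sqrt{\det g}$ with a Poincar\'e-based multiplier bound on $W^{-1,s}$, so you only need the case $F=0$. The paper instead writes $u=\Div v$ first and moves $\sqrt{\det g}$ inside the divergence, which produces the lower-order term $-v\cdot D\sqrt{\det g}\in L^s$, handled by the $F$-part of Lemma~\ref{le:w1p_dirichlet}.
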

\begin{proof}
Assume the Lemma holds for $r=1$, and let $C$ be the resulting constant. Let $g\in C^2( B_r)$
obey~\eqref{eqgC0}. We define $\hat g\in C^2(B_1)$
by $\hat g(x)=g(rx)$. Then $|D\hat g|\le r |Dg|\le C_0$, and the same for $D^2\hat g$. Therefore $\hat g$ obeys~\eqref{eqgC0} with the same $C_0$. We set $\hat u(x)=u(rx)$, let $\hat U:=(-L_{\hat g})^{-1}\hat u$,
so that $\|\Enabla \hat U\|_{L^s(B_1)}\le C(s)\|\hat u\|_{W^{-1,s}(B_1)}$,
and set $U(x):=r\hat U(x/r)$. We check that
$-L_g U=u$, and that the estimate follows.

Therefore it suffices to prove the Lemma for $r=1$.
We recall that for any $u\in W^{1,-s}$ there is
$v\in L^s$ with $\Div v=u$ and $\|v\|_{L^s}\le C \|u\|_{W^{-1,s}}$.
We write
$-L_g U = \Div v$ as
\begin{equation}
- \Div (\sqrt{\det g} g^{-1} DU)= \Div (v {\sqrt{\det g}}) - v\cdot \Enabla \sqrt{\det g},
\end{equation}
which has the same form as \eqref{eq:app_scalar_divergence_form}. The coefficients and their derivatives (and, hence, their modulus of continuity) are controlled by $C_0$. Then from Lemma~\ref{le:w1p_dirichlet}  with $p=s$ we obtain
\begin{equation}
 \|\Enabla U\|_{L^s(B_1)} \le C(s)
 \|v\|_{L^s(B_1)}.
\end{equation}
Recalling $\|v\|_{L^s(B_1)}\le C(s)
 \|u\|_{W^{1,-s}(B_1)}$,
observing that $U=(-L_v)^{-1}u$, this has the same form as \eqref{eqellipticLgWmus}.
\end{proof}

\begin{lemma}\label{lemmanablaLgW2s}
 For $C_0>0$, $s\in (1,\infty)$, there is $C>0$ such that
 for any $r\in(0,1]$, any $g\in C^2( B_r)$ which obeys~\eqref{eqgC0} in $B_r$  one has
\begin{equation}\label{eqellipticLgW2s}
 \|\Enabla^2 (-L_g)^{-1}u\|_{L^s(B_r)} \le C(s)
 \|u\|_{L^s(B_r)}
\end{equation}
for all $u\in L^{s}(B_r)$.
\end{lemma}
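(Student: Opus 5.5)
First I reduce to $r=1$ by the scaling used in Lemma~\ref{lemmanablaLgW1ms}. Given $g$ on $B_r$ obeying \eqref{eqgC0}, set $\hat g(x):=g(rx)$; then $\hat g$ obeys \eqref{eqgC0} on $B_1$ with the same $C_0$, because $r\le1$. For $u\in L^s(B_r)$ set $\hat u(x):=r^2u(rx)$, let $\hat U:=(-L_{\hat g})^{-1}\hat u$, and put $U(x):=\hat U(x/r)$. Then $-L_gU=u$ with $U\in W^{1,s}_0(B_r)$. By uniqueness in Lemma~\ref{le:w1p_dirichlet}, $U=(-L_g)^{-1}u$. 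Both $\|D^2U\|_{L^s(B_r)}$ and $\|u\|_{L^s(B_r)}$ scale like $r^{n/s-2}$ times the corresponding norms on $B_1$, so the estimate with constant $C$ on $B_1$ gives the same constant on $B_r$.

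On $B_1$, write the equation in divergence form as in \eqref{eqLgufinalpart3}:
\begin{equation*}
-\Div(a\,DU)=\sqrt{\det g}\,u, \qquad a:=\sqrt{\det g}\,g^{-1}.
\end{equation*}
The coefficients satisfy \eqref{eq:app_ellipticity_a}--\eqref{eq:app_C2_a} with constants depending only on $C_0$ and $n$, and the right-hand side has $L^s$ norm at most $C\|u\|_{L^s}$.
\begin{enumerate}
\item By Lemma~\ref{le:w1p_dirichlet} (with $p=s$, and $F$ in $L^s\subset L^{\tilde s}$ for suitable $\tilde s$), we get a unique $U\in W^{1,s}_0(B_1)$ with $\|U\|_{W^{1,s}}\le C\|u\|_{L^s}$. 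The modulus of continuity of $a$ is controlled by $C_0$, so the constant is uniform in $g$.
\item For the second derivatives I want a global $W^{2,s}$ bound up to $\partial B_1$, not only the interior bound of Lemma~\ref{eq:app_w2q_estimates_first}. I would expand the equation in nondivergence form, $-a^{\alpha\beta}D_{\alpha\beta}U-(D_\beta a^{\alpha\beta})D_\alpha U=\sqrt{\det g}\,u$, with $\|Da\|_\infty\le C(C_0)$.
\item I would then invoke the global Calder\'on--Zygmund estimate with Dirichlet data on the smooth domain $B_1$ \cite[Thm.~9.15 and Lemma~9.17]{GilbargTrudinger01}. It gives $U\in W^{2,s}(B_1)$ and
\begin{equation*}
\|U\|_{W^{2,s}(B_1)}\le C\bigl(\|u\|_{L^s}+\|DU\|_{L^s}+\|U\|_{L^s}\bigr).
\end{equation*}
Combined with step 1, this yields \eqref{eqellipticLgW2s}.
\end{enumerate}

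Regularity is first shown for smooth $u$ and then extended by density, using uniqueness in $W^{1,s}_0$ to identify the limit.

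The main obstacle is uniformity of the constants: the Gilbarg--Trudinger constants must depend only on $n$, $s$, the ellipticity and bound constants, the modulus of continuity of $a$, and the domain. Here the domain is fixed ($B_1$) and $a$ is uniformly Lipschitz with bounds from $C_0$, so I expect this to hold. The nonzero first-order coefficient $b^\alpha=D_\beta a^{\alpha\beta}$ is bounded by $C(C_0)$, so it too is admissible.
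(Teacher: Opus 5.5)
Your argument is correct, and it takes a different route from the paper's. Your scaling reduction is right: with $\hat u(x)=r^2u(rx)$ and $U(x)=\hat U(x/r)$ one gets $-L_gU=u$, and both sides of \eqref{eqellipticLgW2s} scale like $r^{n/s-2}$.

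After this reduction the paper stays in divergence form. It differentiates $-\Div(\sqrt{\det g}\,g^{-1}DU)=\sqrt{\det g}\,u$ in $x_j$, moves the commutator $\Div(D_j(\sqrt{\det g}\,g^{-1})DU)$ to the right, and applies Lemma~\ref{lemmanablaLgW1ms} to $D_jU$. That is short and needs only the $W^{1,s}$ theory. However, Lemma~\ref{lemmanablaLgW1ms} controls the $W^{1,s}_0$ solution, and $D_jU$ does not vanish on $\partial B_1$. For example, for $g=\Id$ and $u\equiv1$ one has $U=(1-|x|^2)/(2n)$ and $D_jU=-x_j/n$, even though $-\Delta D_jU=0$. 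So that step really gives only interior control (or tangential control after flattening), and the normal--normal derivative at the boundary has to be recovered separately.

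Your route handles this boundary work inside the black box: you pass to nondivergence form with bounded drift $D_\beta a^{\alpha\beta}$, use global Calder\'on--Zygmund theory for the Dirichlet problem on the smooth domain $B_1$, and identify the strong solution with $(-L_g)^{-1}u$ by uniqueness in $W^{1,s}_0$ from Lemma~\ref{le:w1p_dirichlet}. It therefore gives the estimate up to $\partial B_r$ that Proposition~\ref{prop:harmonic_replacement_interior} uses.

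One precision on uniformity. The inequality you display, with lower-order terms on the right, is the boundary estimate \cite[Thm.~9.13]{GilbargTrudinger01} with $T=\partial B_1$. Its constant depends only on $n$, $s$, the ellipticity and $L^\infty$ bounds, and the modulus of continuity of $a$. Lemma~9.17 there is obtained by a contradiction argument for a fixed operator, so by itself it would not give a constant uniform in $g$. Combining Theorem~9.13 with your step~1, as you in effect do, settles this.
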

\begin{proof}As in the other case, by scaling it suffices to prove the Lemma for $r=1$, by density we can assume $u\in C^\infty_c$.
We write
$-L_g U = u$ as
\begin{equation}
- \Div (\sqrt{\det g} g^{-1} DU)= u {\sqrt{\det g}},
\end{equation}
we know that it has a unique solution $U$, with
\begin{equation}\label{eqnablaUCu}
 \|\Enabla U\|_{L^s(B_1)}\le C \|u\|_{W^{-1,s}(B_1)}
 \le C \|u\|_{L^s(B_1)}.
\end{equation}
We differentiate the equation and obtain
\begin{equation}
- \Div D_j  (\sqrt{\det g} g^{-1} DU)= D_j (u {\sqrt{\det g}}),
\end{equation}
which can be rewritten as
\begin{equation}
- \Div (\sqrt{\det g} g^{-1} D(D_j  U))= D_j (u {\sqrt{\det g}})
- \Div (D_j(\sqrt{\det g} g^{-1}) D U)
\end{equation}
By the previous Lemma,
\begin{equation}\begin{split}
 \|\Enabla D_jU\|_{L^s} \le& C \|u\sqrt{\det g}\|_{L^{s}}
 + C\|D_j(\sqrt{\det g} g^{-1}) D U\|_{L^s}\\
  \le & C \|u\|_{L^{s}}
 + C\| D U\|_{L^s}\end{split}
\end{equation}
Recalling~\eqref{eqnablaUCu}, the proof is concluded.
\end{proof}

\section{Weighted estimate at the boundary}\label{se:smith}
In this section we prove bounds for functions that satisfy estimates of the form
\begin{equation}
|D^2f| \le \frac{A}{\dist(\cdot,\partial\Omega)} + B .
\end{equation}
It would be tempting to simply solve this by
taking a primitive separately of the two components of $D^2f$. However, they are not separately second gradients, only their sum is. While one may think of this as a decomposition into low- and high-frequencies, the precise construction of the decomposition is based on a singular-integral formulation.

Precisely, we prove the following.

\begin{proposition}\label{propLipschitzboundaryfull}
Let $\Omega \subset \R^n$ be a bounded Lipschitz set,
$1<p<m<\infty$,
$f\in W^{1,m}(\Omega)\cap W^{2,1}_\mathrm{loc}(\Omega)$, 
$A,B:\Omega\to[0,\infty)$ be such that
\begin{equation}\label{eqD2fAdistB}
|D^2f| \le \frac{A}{\dist(\cdot,\partial\Omega)} + B ,
\end{equation}
and, for some $\alpha\ge0$, $\eps\ge0$,
\begin{equation}
 \|f\|_{W^{1,m}}
 +\|A\|_{L^m}
 +\|B\|_{L^m}\le \alpha,\quad
 \|A\|_{L^p}\le \eps.
\end{equation}
Then there are $f_h$, $f_l\in W^{1,m}(\Omega)$ such that $f=f_h+f_l$,
\begin{equation}
 \|f_h\|_{W^{1,p}}\le C \eps,\quad
 \|f_h\|_{W^{1,m}}+
 \|f_l\|_{W^{2,m}}\le C\alpha
\end{equation}
with $C=C(\Omega,p,m)$.
\end{proposition}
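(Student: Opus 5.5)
We plan to reduce to a local statement near the boundary in a Lipschitz graph chart and then decompose $f$ with a cutoff adapted to a Whitney-type regularized distance. By a finite partition of unity subordinate to a cover of $\overline\Omega$ by one interior ball and finitely many boundary cylinders in which $\Omega=\{x_n<\gamma(x')\}$ with $\gamma$ Lipschitz, it suffices to treat $\theta f$ with $\theta$ smooth and supported in one such cylinder. Multiplying by $\theta$ only adds terms $|D\theta||Df|+|D^2\theta||f|$ to $D^2(\theta f)$. These terms are bounded in $L^m$ by $C\alpha$ and can be absorbed into $B$, since $\dist(\cdot,\partial\Omega)\le C$ and the estimate is only needed where $\theta\ne0$. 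The interior piece is directly bounded in $W^{2,m}$ and we put it into $f_l$. So we may assume $\Omega=\{x_n<\gamma(x')\}\cap$ cylinder, with $f$ supported away from the lateral and bottom parts, and with $\dist(x,\partial\Omega)$ comparable to $\gamma(x')-x_n$.

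In the graph setting the natural construction is a ``smoothed $f$'' $f_l(x):=\int f(x-\delta(x) y)\,\eta(y)\,dy$. Here $\delta(x)=c\,(\gamma(x')-x_n)$ is replaced by a regularized distance with $|D\delta|\le C$ and $|D^2\delta|\le C/\delta$. The kernel $\eta$ is supported in a cone $\{y_n>|y'|\cdot L\}$, so that $x-\delta(x)y\in\Omega$ and the ball of radius comparable to $\delta(x)$ around it stays in $\Omega$ at distance $\sim\delta(x)$ from $\partial\Omega$. We set $f_h:=f-f_l$.

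The key estimates are as follows. For $D^2 f_l$, differentiating twice produces terms $D^2f(x-\delta y)(\Id-yD\delta)^2$, $Df(x-\delta y)\,y\,D^2\delta$, etc. The first term is controlled by $A(z)/\delta(x)+B(z)$ at points $z$ with $\dist(z,\partial\Omega)\sim\delta(x)$. This is too singular, so we do not differentiate $f$ twice. Instead we move one derivative onto the kernel: writing $f_l=\int f(z)\delta^{-n}\eta((x-z)/\delta)\,dz$, the derivative $D^2f_l$ equals an integral of $Df(z)$ against a kernel of size $\delta^{-n-1}$ with \emph{zero mean}. We then replace $Df(z)$ by $Df(z)-Df(x_*)$ on the support and use the Poincaré inequality on balls of radius $\sim\delta$. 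This gives $|Df(z)-(Df)_{\text{ball}}|\lesssim \delta\cdot(A/\delta+B)$ on average, so $|D^2f_l(x)|\lesssim \mathcal M(A+\delta B)/\delta$. That is still singular, so the correct step is to exploit both terms differently. For $f_h$, we have $|Df_h(x)|\le \sup_{|w-x|\lesssim\delta}|Df(w)-Df(x)|+|D\delta||\dots|\lesssim \delta\,\mathcal M(|D^2f|)\lesssim \mathcal M A+\delta\,\mathcal M B$. Hence $\|Df_h\|_p\le C\eps+C\|\delta \mathcal M B\|_p$, and the $B$-part must therefore be moved into $f_l$.

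This shows the main obstacle: $A$ and $B$ must be separated although only their sum bounds $D^2f$. I would handle it by a Calderón–Zygmund/Smith-type decomposition of the second derivative. Set $D^2f=T_A+T_B$ with $|T_A|\le A/\dist$ and $|T_B|\le B$ measurably, e.g. $T_A=D^2f\cdot\frac{A/\dist}{A/\dist+B}$. Then reconstruct $f$ from $D^2f$ by a representation formula $f=P(f)+\int K(x,z)D^2f(z)\,dz$ (Smith's/Bogovskii-type formula for star-shaped or Lipschitz domains, with kernel $|K|\lesssim|x-z|^{2-n}$ and derivative kernels Calderón–Zygmund). Applying this to $T_B$ gives $f_l$ with $\|f_l\|_{W^{2,m}}\lesssim\|B\|_m+\|f\|_{W^{1,m}}$ by $L^m$ boundedness of singular integrals. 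Applying it to $T_A$ gives $f_h$ with $|Df_h(x)|\lesssim\int|x-z|^{1-n}A(z)/\dist(z)\,dz$ restricted to cones. By a Hardy-type inequality (the cone structure forces $\dist(z)\gtrsim|x-z|$), this is $\lesssim\int|x-z|^{-n}A(z)\,dz$ over a cone, i.e. a maximal-type operator bounded on $L^p$ and $L^m$. Verifying this cone-support property of the Lipschitz-domain representation kernel is the step I expect to be hardest.
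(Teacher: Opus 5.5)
Your reduction to one boundary chart matches the paper's architecture, and so do the measurable splitting $D^2f=T_A+T_B$ with $|T_A|\le A/\dist$, $|T_B|\le B$ and the reconstruction through a cone-supported Smith kernel $K$, with $D^2K*(\cdot)$ a Calder\'on--Zygmund operator for the $B$-part. The cone support you expect to be hardest is actually automatic: $K_{ij}(y)=|y|^{-n}y_iy_j\,\eta(y/|y|)$, with $\eta$ supported in any prescribed open set of directions.

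The genuine gap is the final bound for $Df_h=DK*(\eta T_A)$. Replacing $\dist(z)$ by $|x-z|$ gives $\int_{x-\Lambda}|x-z|^{-n}A(z)\,dz$, and this is not a maximal-type operator. Its kernel is positive and homogeneous of degree $-n$, so $\int_{\Lambda\cap B_\rho}|y|^{-n}\,dy=\infty$. The integral is therefore $+\infty$ wherever $A$ is, for example, continuous and positive. Your inequality is true but useless, and no $L^p$ or $L^m$ bound follows. Near $z=x$ you have discarded the information $\gamma(z')-z_n\ge\gamma(x')-x_n>0$.

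Your route can be repaired as follows. Take the strictly narrower cone $\Lambda=\{y_n\ge 2L|y'|\}$ and set $d(x):=\gamma(x')-x_n$. For $y\in\Lambda$ and $z=x-y$ you get $d(z)\ge d(x)+y_n/2\ge d(x)+c_L|x-z|$. Hence $|Df_h(x)|\le C\,SA(x)$, where $SA(x):=\int_{x-\Lambda}\frac{|x-z|^{1-n}}{d(x)+|x-z|}A(z)\,dz$.

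Even this $S$ is not pointwise dominated by $\mathcal MA$: for $A\equiv 1$, the part $|x-z|>d(x)$ grows like $\log(1/d(x))$. Its adjoint, however, is dominated. For fixed $z$, the support forces $|x-z|\le d(z)/c_L$. On $|x-z|\le d(z)/(2+2L)$ one has $d(x)\ge d(z)/2$, so the kernel is at most $2|x-z|^{1-n}/d(z)$. On the remaining shell the kernel is at most $C\,d(z)^{-n}$. This gives $S^*g\le C\mathcal Mg$, and by duality $S$ is bounded on $L^p$ and $L^m$.

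The paper avoids positivity altogether. Lemma~\ref{lemmaxntodiv} writes $\hat A/(\gamma(x')-x_n)=\partial_n\hat A^*_n$ with $\|\hat A^*\|_{L^q}\le q\|\hat A\|_{L^q}$, using the one-dimensional Hardy inequality in the normal variable. Then $Df_h=D^2K*(\eta\hat A^*)$ is again a Calder\'on--Zygmund operator with cancellation. A single singular-integral theorem thus treats both pieces, and the non-strict cone $\{L|y'|\le y_n\}$ suffices. Your corrected argument instead uses only the size bound $|DK|\le C|y|^{1-n}$. The price is a strictly narrower cone and a duality argument with the maximal function.
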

Actually, we will only need the following local version, which implies Proposition~\ref{propLipschitzboundaryfull} by a standard covering argument with an appropriate partition of unity.
\newcommand\omegarf{\omega_{r,\gamma}}%
\newcommand\omegarfi{\omega_{r_i,\gamma_i}}%
\newcommand\omegarhaf{\omega_{r/2,\gamma}}%
\newcommand\omegarhafi{\omega_{r_i/2,\gamma_i}}%
\newcommand\omegarduefi{\omega_{2r_i,\gamma_i}}%
\newcommand\omegarfj{\omega_{r_j,\gamma_j}}%
\newcommand\omegarhafj{\omega_{r_j/2,\gamma_j}}%
\newcommand\omegarduefj{\omega_{2r_j,\gamma_j}}%
For $r,L>0$ we define
\begin{equation}\label{eqdefCrL}
T_{r,L}:= B_r'\times (-2Lr,2Lr),
\end{equation}
as usual $B_r'$ denotes the ball in $\R^{n-1}$.
\begin{proposition}\label{propLipschitzboundarylocal}
Let
$1<p<m<\infty$,
$r>0$, $L>0$, $\gamma\in\Lip(\R^{n-1})$ with $\Lip(\gamma)\le L$ and $\gamma(0)=0$,
\begin{equation}\label{eqdefomegargamma}
 \omegarf:=\{(x',x_n)\in T_{r,L}: x_n<\gamma(x')\},
\end{equation}
$f\in W^{1,m}(\omegarf)\cap W^{2,1}_\mathrm{loc}(\omegarf)$, with
\begin{equation}\label{eqD2fAdistBloc}
|D^2f|(x) \le \frac{A(x)}{\gamma(x')-x_n} + B(x) ,
\end{equation}
\begin{equation}
 \|f\|_{W^{1,m}(\omegarf)}
 +\|A\|_{L^m(\omegarf)}
 +\|B\|_{L^m(\omegarf)}\le \alpha,\quad
 \|A\|_{L^p(\omegarf)}\le \eps.
\end{equation}
Then there are $f_h$, $f_l\in W^{1,m}(\omegarhaf)$ such that $f=f_h+f_l$,
\begin{equation}
 \|f_h\|_{W^{1,p}(\omegarhaf)}\le C \eps,\quad
 \|f_h\|_{W^{1,m}(\omegarhaf)}+
 \|f_l\|_{W^{2,m}(\omegarhaf)}\le C\alpha
\end{equation}
with $C=C(p,m,r,n,\gamma)$.
\end{proposition}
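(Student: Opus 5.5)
The plan is to split the Hessian pointwise into a part that is small in a negative-order sense and a part that is honestly in $L^m$, and then to recover $f_h$ and $f_l$ from this splitting by singular integrals on all of $\R^n$. Write $d(x):=\gamma(x')-x_n$. Since $\Lip(\gamma)\le L$, on $\omegarf$ the quantity $d$ is comparable to $\dist(\cdot,\partial\omega_{r,\gamma}\cap\{x_n=\gamma(x')\})$. Define measurable $a,b:\omegarf\to\R^{n\times n}$ by
\begin{equation*}
a:=D^2f\,\frac{A/d}{A/d+B},\qquad b:=D^2f\,\frac{B}{A/d+B},
\end{equation*}
with the convention $a=b=0$ where $A=B=0$. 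By \eqref{eqD2fAdistBloc} these satisfy $D^2f=a+b$, $|a|\le A/d$ and $|b|\le B$. The term $b$ is harmless, since $\|b\|_{L^m}\le\alpha$. The term $a$ is not integrable in general, but it is small and bounded in the right negative-order spaces.

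\textbf{Step 1: a Hardy-type primitive in the vertical direction.} Replace $a$ by a vertical divergence of a controlled field. For $x\in\omegarf$ set
\begin{equation*}
G(x',x_n):=\int_{-2Lr}^{x_n} a(x',t)\,dt .
\end{equation*}
Then $|G(x)|\le\int_{d(x)}^{d(x)+4Lr}\frac{A(x',\gamma(x')-\tau)}{\tau}\,d\tau$. This is the dual Hardy operator in the variable $\tau=d$, which is bounded on $L^q(0,\infty)$ for every $q\in(1,\infty)$. Applying it on each vertical line and integrating over $x'$ gives
\begin{equation*}
\|G\|_{L^p}\le C\|A\|_{L^p}\le C\eps,\qquad \|G\|_{L^m}\le C\alpha,
\end{equation*}
and $\partial_nG=a$ distributionally in $\omegarf$. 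Integrating from the bottom of the box, away from the Lipschitz boundary, is what makes the operator bounded. Integrating from the boundary would produce a logarithmically divergent operator.

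\textbf{Step 2: singular integrals.} Now $D^2f=\partial_nG+b$, so in particular $\Delta f=\partial_n\Tr G+\Tr b$ in $\omegarf$. Extend $G$ and $b$ by zero to $\R^n$, let $N$ be the fundamental solution of $-\Delta$, and set
\begin{equation*}
f_h^0:=-N*\partial_n(\Tr G),\qquad f_l^0:=-N*(\Tr b).
\end{equation*}
By Calder\'on--Zygmund theory \cite{Stein1970}, $\|Df_h^0\|_{L^q}\le C\|G\|_{L^q}$ for $q=p,m$, and $\|D^2f_l^0\|_{L^m}\le C\|B\|_{L^m}$. Lower-order terms are handled by subtracting averages on the bounded set, so $f_h^0$ and $f_l^0$ already have the required bounds. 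The remainder $w:=f-f_h^0-f_l^0$ is harmonic in $\omegarf$. It satisfies $\|w\|_{W^{1,m}}\le C\alpha$, and $D^2w=D^2f-D^2f_h^0-D^2f_l^0$, where each term admits a bound of the form $a'+b'$ with $a'$ and $b'$ of the same type as $a$ and $b$.

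\textbf{Step 3: the main obstacle.} A harmonic $w$ with $Dw\in L^m$ need not lie in $W^{2,m}$ up to the Lipschitz boundary, so the argument cannot stop at the Laplacian. Instead, I would apply Steps 1 and 2 to every second derivative, not only to the trace. Each component $\partial_j(\partial_if)=\partial_nG_{ij}+b_{ij}$ is represented through the operators $D^2N*$ acting on $\partial_n(\chi G_{ij})$ and $\chi b_{ij}$, which give an $L^p\cap L^m$-small gradient part and a $W^{2,m}$ part respectively. The difficulty is that these componentwise primitives need not assemble into a single function. To fix this, I would use the compatibility $\partial_k(\partial_jf_{,i})=\partial_j(\partial_kf_{,i})$ and rewrite $D^2f=D\,(\mathcal R\,G)+D\,(\mathcal R'\,b)$ with zero-order Riesz-type operators $\mathcal R$ and $\mathcal R'$ that commute with $D$. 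That is, I would define $Df_h:=\mathcal R\,G$ and $D^2f_l:=D\mathcal R' b$ after localising with a cutoff equal to $1$ on $\omegarhaf$. Cutoff errors are lower order: they involve only $f$ and $Df$, are supported where $d\gtrsim r$, and are therefore bounded by $C\alpha$ in $W^{2,m}$ using \eqref{eqD2fAdistBloc} there. I expect the main technical work to be checking that the commutators created by the cutoff, and by the vertical integration in the definition of $G$, are bounded on $L^p$ and $L^m$ with constants depending only on $p,m,r,n,\gamma$.
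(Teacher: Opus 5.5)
Your Steps 0 and 1 match the paper. The paper splits $D^2f$ with $\chi_{\{|D^2f|\le 2B\}}$ rather than your ratio weights, and Lemma~\ref{lemmaxntodiv} is exactly your vertical Hardy primitive taken from the bottom of the box.

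\textbf{The gap is in Steps 2--3.} It is not the one you identify. Compatibility of componentwise primitives is never the issue, since a scalar reproducing formula $f=\sum_{ij}K_{ij}*\partial_i\partial_jf$ assembles automatically. The obstruction is the graph part $\{x_n=\gamma(x')\}$ of $\partial\omega_{r,\gamma}$.

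Any convolution whose kernel is not one-sided sees the jumps of $\eta f$ and $D(\eta f)$ across the graph when applied to data extended by zero. This covers the Newtonian potential, Riesz transforms, and $D^2N*$ acting on $\partial_n(\chi G)$ and $\chi b$. The resulting layer terms involve traces of $f$ and $Df$ on the graph. These are not controlled: $Df\in L^m$ has no trace. So $f-f_h-f_l$ is again a function like your $w$, with no $W^{2,m}$ bound up to the graph.

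A cutoff cannot remove this, because it must equal $1$ on $\omega_{r/2,\gamma}$, which touches the graph. Your claim that the cutoff errors live where $d\gtrsim r$ is therefore false. They are harmless for a different reason: they only involve $f$, $Df$, $G$, $b$, which are bounded in $L^m$. Consequently the identity $Df=\mathcal R G+\mathcal R' b$ cannot be produced by Riesz-type operators on $\R^n$, and the commutator estimates you single out are not where the difficulty lies.

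\textbf{The missing idea is a reproducing formula whose kernel is supported in a cone pointing away from the graph.} This is Proposition~\ref{propsmith}.
\begin{itemize}
\item For $\Lambda=\{L|y'|\le y_n\}$ there are $K_{ij}\in C^\infty(\R^n\setminus\{0\})$, homogeneous of degree $2-n$ and vanishing outside $\Lambda$, with $u(x)=\sum_{ij}\int K_{ij}(y)\,\partial_i\partial_ju(x-y)\,dy$ for $u\in C^\infty_c(\R^n)$. They come from averaging $u(x)=\int_0^\infty t\,\partial_t^2[u(x-t\xi)]\,dt$ over $\xi$ in a relatively open subset of $\Lambda\cap S^{n-1}$.
\item Since $\Lip(\gamma)\le L$, for $x\in\omega_{r/2,\gamma}$, $y\in\Lambda$ and $x-y\in T_{r,L}$ one has $(x-y)_n<\gamma(x')-L|y'|\le\gamma(x'-y')$. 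So the formula applied to $\eta f$ at $x$ only samples the interior of $\omega_{r,\gamma}$, and no boundary terms arise.
\item In your notation, $D^2(\eta f)=\partial_n(\eta G)+B'$ with $B':=\eta b-(\partial_n\eta)G+D\eta\otimes Df+Df\otimes D\eta+fD^2\eta$ and $\|B'\|_{L^m}\le C\alpha$. Set $f_h:=\sum_{jk}(\partial_nK_{jk})*(\eta G_{jk})$ and $f_l:=\sum_{jk}K_{jk}*B'_{jk}$.
\item Because the $K_{jk}$ are homogeneous of degree $2-n$, their second derivatives are Calder\'on--Zygmund kernels. This gives $\|Df_h\|_{L^p}\le C\eps$ and $\|Df_h\|_{L^m}+\|D^2f_l\|_{L^m}\le C\alpha$.
\item The identity $f=f_h+f_l$ on $\omega_{r/2,\gamma}$ follows by mollifying on $\omega_{r/2,\gamma-\delta}$ and letting $\delta\to0$. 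The lower-order norms follow by moving the average of $f_h$ into $f_l$ and applying Poincar\'e.
\end{itemize}
With this kernel in place of $N$ and $\mathcal R$, your Steps 0 and 1 complete the proof, as in Proposition~\ref{propLipschitzboundary}.
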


\begin{proof}[Proof of Prop.~\ref{propLipschitzboundaryfull} from Prop.~\ref{propLipschitzboundarylocal}]

Since $\Omega$ is bounded and Lipschitz, there are finitely many functions $\gamma_j\in \Lip(\R^{n-1})$, $\gamma_j(0)=0$,
$r_j>0$,
and affine isometries $I_j:\R^n\to\R^n$ such that, using the notation from \eqref{eqdefomegargamma},
\begin{equation}\label{eqLipomegaIi}
\Omega\cap  I_j(T_{2r_j,L})=I_j(\omegarduefj),
\hskip5mm
\partial\Omega\subset \bigcup_{j=1}^J
I_j(T_{r_j/2,L}).
\end{equation}
Let $\theta_j$, $j=0,\dots, J$, be a partition of unity on $\Omega$ with $\theta_0\in C^\infty_c(\Omega)$,
$\theta_j\in C^\infty_c( I_j(T_{r_j,L}))$,
and set $f_j:=f\theta_j$. We shall decompose each of them, and then add the results.

For $j=0$,
from $\dist(\supp\theta_0,\partial\Omega)>0$ we obtain
 $\|f_0\|_{2,m}\le C \|f\|_{1,m}+ \|\theta_0 D^2f\|_m\le C\alpha$, so that $f_{l,0}:=f_0$, $f_{h,0}:=0$ will do.

We next
treat one of the boundary terms $f_j=f\theta_j$.
For simplicity of notation assume $I_j$ is the identity.
 We define $\hat A_j:\omegarfj\to\R$ by
 \begin{equation}
  \hat A_j(x',x_n):=(\theta_jA)(x',x_n)  \frac{\gamma_j(x')-x_n}{\dist(x,\partial\Omega)},
 \end{equation}
and observe that $|\hat A_j|\le C \theta_j|A|$ pointwise
in $\omega_{r,\gamma_j}$.
To obtain this bound, one uses that the first condition in~\eqref{eqLipomegaIi}
involves $\omega_{2r,\gamma_j}$.
At the same time,
\begin{equation}\begin{split}
 |D^2f_j|\le & \theta_j |D^2f|+ 2|D\theta_j|\, |Df|+|D^2\theta_j|\, |f|\\
 \le& \frac{\theta_j A}{\dist(\cdot,\partial\Omega)}
 +\theta_j B + 2|D\theta_j|\, |Df|+|D^2\theta_j|\, |f|,
\end{split}\end{equation}
so that setting
$\hat B_j:=\theta_j B + 2|D\theta_j|\, |Df|+|D^2\theta_j|\, |f|$
we have the desired bound
\begin{equation}\begin{split}
 |D^2f_j|(x)\le
& \frac{\hat A_j(x)}{\gamma_j(x')-x_n}
 +\hat B_j(x).
\end{split}\end{equation}
One immediately checks that all estimates are inherited (up to constants).
From \Cref{propLipschitzboundarylocal}
we obtain $f_{j,h}$ and $f_{j,l}$.
Before summing, we need to extend those functions to the rest of $\Omega$.
For this we select for each $j$ a cutoff function
$\psi_j\in  C^\infty_c( I_j(T_{r_j,L}))$,
with $\psi_j=1$ on $\supp \theta_j$. Since $f_j=0$
in $\Omega\setminus \supp\theta_j$, we have
$f_j=\psi_j f_j = \psi_j f_{j,h}+ \psi_j f_{j,l}$ pointwise in $\Omega$ (the function $\psi_j f_{j,h}$ is implicitly extended by 0 to the rest of $\Omega$).
For $j=0$ we set $\psi_0=1$.
Finally, we set
\begin{equation}
 f_h:=\sum_{j=0}^J \psi_j f_{j,h},\hskip1cm
 f_l:=\sum_{j=0}^J \psi_j f_{j,l}
\end{equation}
and easily check that they have the desired properties.
\end{proof}

To prove Proposition~\ref{propLipschitzboundarylocal}, we recall a general representation formula to recover $u$ from its second derivatives (Proposition~\ref{propsmith}), then we show an estimate for solutions of equations of the form
$D^2u=\Div A+B$
(Proposition~\ref{propLipschitzboundary}), and finally show that the weighted estimate \eqref{eqD2fAdistB} can be converted to a bound of this form (Lemma~\ref{lemmaxntodiv}).

We use the following  classical result to reconstruct a function from its second partial derivatives.
The result holds much more generally for families of linear  homogeneous differential operators with constant coefficients whose characteristic polynomials have no common complex zeroes,
see  \cite[Th. I]{Smith1970} and   \cite{Smith1961}.

\begin{proposition}\label{propsmith}
Let $\Lambda \subset \R^n$ be  a closed cone with nonempty interior. Then for $i, j \in \{1, \ldots, n\}$ there exist
functions $K_{ij} : \R^n \setminus \{0\} \to \R$ with the following properties:
\begin{enumerate}
\item $K_{ij} \in C^\infty(\R^n \setminus \{0\}$) and
$K_{ij}$ is homogeneous of degree $2-n$;
\item $K_{ij} =0$ on $\R^n \setminus \Lambda$;
\item for every $u \in C_c^\infty(\R^n)$ we have 
$u = \sum_{i,j=1}^n K_{ij} \ast \partial_i \partial_j u$, i.e.,
\begin{equation}  \label{eq:representation_u}
u(x) = \sum_{i,j=1}^n\int_{\R^n}  K_{ij}(y) \, \partial_i \partial_j u(x-y) \, dy.
\end{equation}
\end{enumerate}
\end{proposition}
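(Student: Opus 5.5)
The plan is to construct the kernels via the Fourier transform and a homogeneous partition on the sphere. Write $\xi$ for the frequency variable. The identity $u=\sum K_{ij}*\partial_i\partial_j u$ is equivalent, on the Fourier side, to
\begin{equation*}
\sum_{i,j}\hat K_{ij}(\xi)\,(-\xi_i\xi_j)=1,\qquad \xi\neq0.
\end{equation*}
Here the $\hat K_{ij}$ should be homogeneous of degree $-2$ as tempered distributions, which corresponds to $K_{ij}$ being homogeneous of degree $2-n$. The naive choice $\hat K_{ij}=-\delta_{ij}|\xi|^{-2}$ gives the Newtonian potential, but that kernel is not supported in $\Lambda$. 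So we must design $K_{ij}$ in physical space to satisfy the support condition while preserving the algebraic identity.

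First, pick a direction $e$ in the interior of $\Lambda$. The key building block is a one-dimensional fundamental solution of $(e\cdot\nabla)^2$ supported on the half-line $\{te: t\ge0\}$, namely $t_+\delta_{e^\perp}$. This is too singular and not homogeneous of the right degree. We smooth it transversally by averaging over a small open set of directions $\nu$ inside the interior of $\Lambda$. Concretely, for $\phi\in C^\infty_c$ supported in $\operatorname{int}\Lambda\cap S^{n-1}$ with $\int\phi=1$, define the distribution
\begin{equation*}
E(u)=\int_{S^{n-1}}\phi(\nu)\int_0^\infty t\,u(t\nu)\,dt\,d\nu .
\end{equation*}
In polar coordinates, $E(u)=\int \phi(x/|x|)\,|x|^{2-n}\,u(x)\,dx$. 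So $E$ is the function $\phi(x/|x|)|x|^{2-n}$: it is smooth away from $0$, homogeneous of degree $2-n$, and supported in $\Lambda$.

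Next, check the representation for the directional second derivative. For $u\in C_c^\infty$ and fixed $\nu$, integration by parts in $t$ gives
\begin{equation*}
\int_0^\infty t\,\frac{d^2}{dt^2}u(x-t\nu)\,dt=u(x).
\end{equation*}
Averaging over $\nu$ with weight $\phi$ yields
\begin{equation*}
u(x)=\int\phi(\nu)\int_0^\infty t\,(\nu\cdot\nabla)^2u(x-t\nu)\,dt\,d\nu=\int E(y)\,\frac{y_iy_j}{|y|^2}\,\partial_i\partial_j u(x-y)\,dy .
\end{equation*}
This suggests setting
\begin{equation*}
K_{ij}(y):=\phi(y/|y|)\,|y|^{2-n}\,\frac{y_iy_j}{|y|^2}.
\end{equation*}
Then the three properties follow: $K_{ij}$ is smooth on $\R^n\setminus\{0\}$ because $\phi$ is smooth on the sphere, it is homogeneous of degree $2-n$, and it vanishes outside the cone generated by $\supp\phi$, which lies in $\Lambda$.

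To make this rigorous I would carry out the computation in the order above. The step needing care is the change of variables $y=t\nu$, $dy=t^{n-1}dt\,d\nu$. This turns $t\,dt\,d\nu$ into $t^{2-n}\,dy$, and one must verify the boundary terms at $t=0$ and $t=\infty$. At $t=\infty$ they vanish by the compact support of $u$. At $t=0$ the boundary terms are $[t\,\partial_t u]_0$, which is zero, and $-[u]_0$, which produces $u(x)$ with the correct sign. Local integrability of $|y|^{2-n}$ makes the final integral absolutely convergent.

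I expect the main obstacle to be just this bookkeeping of signs and exponents, rather than anything deep. No appeal to Smith's general theorem should be needed in the case of second derivatives.
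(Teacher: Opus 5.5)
Your proof is correct and is essentially the paper's. Both apply the one-dimensional identity $w(0)=\int_0^\infty t\,w''(t)\,dt$ along rays, average it against a smooth bump on $\operatorname{int}\Lambda\cap S^{n-1}$, and pass to polar coordinates, arriving at the same kernel $K_{ij}(y)=\phi(y/|y|)\,|y|^{-n}y_iy_j$; your Fourier-side remarks are only heuristic and can be dropped. Your version is in fact slightly more careful: integrating along $x-t\nu$ gives exactly the sign in \eqref{eq:representation_u}, whereas the paper applies the identity to $r\mapsto u(r\xi)$, which literally gives the formula with $u(x+y)$ and is fixed by using $r\mapsto u(-r\xi)$ instead, leaving the kernel unchanged.
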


\begin{proof} We include the short proof for the convenience of the reader.
Since $\Lambda$ has nonempty interior,
the intersection $\Lambda \cap S^{n-1}$ contains a (relatively) open set $V \subset S^{n-1}$. 
We will reconstruct $u$ from its second directional derivatives for directions $\xi \in V$ and average over $V$, $K$ will depend only on $V$.
It suffices to show  \eqref{eq:representation_u} for $x = 0$.

For any $w \in C_c^\infty(\R)$ we have
$$ w(0) = \int_0^\infty r w''(r) \, dr.$$
Applying this to $r \mapsto u(r\xi)$ with $\xi \in V$ we get
\begin{equation}   \label{eq:smith_1d}  u(0) = \sum_{ij}\int_0^\infty  r \xi_i \xi_j  (\partial_i \partial_j  u)(r \xi) \, dr.
\end{equation}
Let $\eta \in C_c^\infty(V)$ with $\int_{S^{n-1}} \eta \, d\mathcal H^{n-1} = 1$.
Multiplying  \eqref{eq:smith_1d}  by $\eta(\xi)$, integrating over $\xi$ and switching from polar coordinates
to Euclidean coordinates we get
\begin{align*}
u(0) = & \,\sum_{ij}  \int_0^\infty \int_{S^{n-1}}  r \eta(\xi) \xi_i \xi_j (\partial_i  \partial_j u)(r \xi) d\mathcal H^{n-1} \, dr \\
= & \, \sum_{ij}\int_{\R^n}  |x|^{2-n} \eta\left(\frac{x}{|x|}\right) \frac{x_i}{|x|} \frac{x_j}{|x|} \, \partial_i \partial_j u(x) \, dx.
\end{align*}
Thus   \eqref{eq:representation_u} holds with $K_{ij}(x) := |x|^{-n} x_i x_j \eta(x/|x|)$.
\end{proof}

\begin{figure}[h]
\def\rr{.4}
\def\LL{1.6}
\begin{center}
 \begin{tikzpicture}[scale=2]
  \draw [fill=yellow!40!white]
  (-\rr,{-\LL*\rr}) rectangle (\rr,{\LL*\rr});

  \draw [fill=green!40!white]
  (-0.5*\rr,{-0.5*\LL*\rr}) rectangle (0.5*\rr,{0.5*\LL*\rr});

    \draw[line width=1.3,blue!80!white,domain=-1:1,samples=100,smooth]
    plot (\x,{0.25*sin(2.2*\x r) + sin((.6+2*\x ) r)*0.15*abs(sin(5*\x r)) + -.08*\x})
    ++(.1,0) node {$\gamma$};

  \def\cone{.23}
  \draw (-.08,-.11) coordinate (A);
  \draw[red!90!black] (A) -- ++ (\cone,-\cone*2*\LL)
  node[right] {$x-\Lambda$}
  (A) -- ++ (-\cone,-\cone*2*\LL) ;

  \fill[red!80!white,opacity=0.2]
  (A)--  ++ (\cone,-\cone*2*\LL)  -- ++ (-2*\cone,0) -- cycle;

  \fill[red!60!black] (A) circle (.02);

  \draw[->](-1,0) -- (1.1,0) node [below] {$\R^{n-1}$};
  \draw[->](0,-.8) -- (0,1.1) node [left] {$\R$};
 \end{tikzpicture}
 \end{center}
\caption{Sketch of the geometry in the proof of Proposition~\ref{propLipschitzboundary}. The two rectangles are $T_r$ and $T_{r/2}$, the red dot is $x\in \omegarhaf$.}
\label{figlipschitz}
\end{figure}

\begin{proposition}\label{propLipschitzboundary}
Let $1<p<m<\infty$, $r>0$, $L>0$, $\gamma\in \Lip(\R^{n-1})$ with $\Lip(\gamma)\le L$ and $\gamma(0)=0$, let $\omegarf$ be as in \eqref{eqdefomegargamma}.
Let $f\in W^{1,m}(\omega_{r,\gamma})\cap W^{2,1}_\mathrm{loc}(\omega_{r,\gamma})$, with
\begin{equation}
 D^2f=\Div A + B
\end{equation}
distributionally, for some $A\in L^m(\omegarf;\R^{n\times n\times n})$, $B\in L^m(\omegarf;\R^{n\times n})$ and assume
\begin{equation}
 \|f\|_{W^{1,m}(\omega_{r,\gamma})}
 +\|A\|_{L^m(\omega_{r,\gamma})}
 +\|B\|_{L^m(\omega_{r,\gamma})}\le \alpha,\quad
 \|A\|_{L^p(\omega_{r,\gamma})}\le \eps.
\end{equation}
Then there are $f_h$, $f_l\in W^{1,m}(\omegarhaf)$ such that $f=f_h+f_l$,
\begin{equation}
 \|f_h\|_{W^{1,p}(\omegarhaf)}\le C \eps,\quad
 \|f_h\|_{W^{1,m}(\omegarhaf)}+
 \|f_l\|_{W^{2,m}(\omegarhaf)}\le C\alpha,
\end{equation}
with $C=C(p,m,r,n,L)$.
\end{proposition}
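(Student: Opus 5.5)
We will use the representation formula of Proposition~\ref{propsmith}, with a cone $\Lambda$ adapted to the Lipschitz geometry. Choose $\Lambda:=\{y\in\R^n: y_n\ge 2L|y'|\}$, a closed cone with nonempty interior. For $x\in\omegarhaf$ the translated cone $x-\Lambda$ lies below the graph of $\gamma$, since $\Lip(\gamma)\le L<2L$. After truncation to $|y|<r/2$ (or into $T_{r,L}$, cf.\ Figure~\ref{figlipschitz}), it stays inside $\omegarf$. We take $\chi\in C^\infty_c(B_{r/2})$ with $\chi=1$ near $0$ and split $K_{ij}=\chi K_{ij}+(1-\chi)K_{ij}$. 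The second part is smooth and bounded with all derivatives. A cutoff alone is not enough, though, because the formula $u=\sum K_{ij}*\partial_{ij}u$ requires $u\in C^\infty_c$. So we first localize: let $\zeta\in C^\infty_c$ with $\zeta=1$ on $T_{r/2,L}$ and support in $T_{3r/4,L}$, and apply the formula to $u=\zeta f$. The value at $x\in\omegarhaf$ only involves $\partial_{ij}(\zeta f)$ on $x-\Lambda$, which lies inside $\omegarf$ but may leave the support region on the bottom side, where $\zeta$ cuts off. This is harmless. To justify the formula for non-smooth $f$, we approximate $f$ by $f(\cdot-te_n)$ mollified, which is smooth on the relevant cone region, and pass to the limit.

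We then write
\[
D^2(\zeta f)=\zeta\,\Div A+\zeta B+2D\zeta\otimes Df+fD^2\zeta=\Div(\zeta A)+\tilde B,
\]
where $\tilde B:=\zeta B-A\cdot D\zeta+2D\zeta\otimes Df+fD^2\zeta$. This gives $\|\tilde B\|_{L^m}\le C\alpha$, while the $L^p$ norm of $\tilde B$ is not controlled by $\eps$; that is acceptable, since $\tilde B$ goes into $f_l$. Set
\[
f_l:=\sum_{ij} K_{ij}*\tilde B_{ij},\qquad f_h:=\sum_{ij}K_{ij}*\partial_k(\zeta A)_{ijk}=\sum_{ij}(\partial_kK_{ij})*(\zeta A)_{ijk},
\]
with the integration by parts done along the cone. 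The boundary terms vanish because $\zeta A$ is extended by zero only outside $\omegarf$, and the cone from $x$ never reaches $\partial\omegarf\cap\{x_n=\gamma\}$. Here is the key point: the cone support of $K$ ensures that only values in $\omegarf$ enter, so no extension across the graph is needed.

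The estimates then follow from Calder\'on--Zygmund theory. The kernel $\partial_kK_{ij}$ is homogeneous of degree $1-n$, and $\partial_l\partial_kK_{ij}$ is homogeneous of degree $-n$, smooth away from $0$. We need that it has zero mean on spheres, which holds because it is a derivative of a degree-$(1-n)$ homogeneous function; a delta term at the origin may appear, and it is bounded. Consequently $D f_h$ is a CZ operator applied to $\zeta A$ plus a multiple of $\zeta A$, so
\[
\|Df_h\|_{L^p}\le C\|A\|_{L^p}\le C\eps,\qquad \|Df_h\|_{L^m}\le C\alpha.
\]
Also $|f_h|\le |x|^{1-n}*|\zeta A|$ is bounded in $L^p$ by Young's inequality on a bounded set. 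Similarly, $D^2f_l$ equals second derivatives of a degree-$(2-n)$ kernel applied to $\tilde B$, which is CZ plus a delta term, so $\|f_l\|_{W^{2,m}}\le C\alpha$. The identity $f_h+f_l=\zeta f=f$ on $\omegarhaf$ comes from Proposition~\ref{propsmith}.

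The main obstacle is justifying the representation formula and the integration by parts for $f$ that is only $W^{2,1}_{\loc}$ with distributional $D^2f=\Div A+B$ up to a Lipschitz boundary. We would handle it through the downward translation plus mollification described above: shift by $te_n$ so that the cone stays compactly inside $\omegarf$, verify the formula for smooth approximants, and pass to the limit using the $L^m$ bounds. The nonsmooth kernel truncation issue does not arise, since we keep the full homogeneous kernel restricted by $\zeta$.
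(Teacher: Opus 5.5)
Your proposal is correct and is essentially the paper's proof. It uses Smith's kernel for a cone adapted to the Lipschitz graph (so that $x-\Lambda$ stays below the graph), a cutoff giving $D^2(\zeta f)=\Div(\zeta A)+\tilde B$, the splitting $f_h=DK*(\zeta A)$, $f_l=K*\tilde B$ with Calder\'on--Zygmund estimates, and a mollification with a margin below the graph to justify the representation; the remaining differences are cosmetic: the wider cone $y_n\ge 2L|y'|$, shifting $f$ downward instead of restricting to $\omega_{r/2,\gamma-\delta}$, Young's inequality instead of Poincar\'e for the zeroth-order part of $f_h$, and the kernel splitting $\chi K+(1-\chi)K$, which you end up not needing.
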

\begin{proof}
We recall \eqref{eqdefCrL},
and write briefly $T_r:=T_{r,L}= B_r'\times (-2Lr,2Lr)$ since $L$ is fixed,
so that $\omegarf=T_r\cap \{ y_n< \gamma(y')\}$, see Figure~\ref{figlipschitz}.
 Let $K$ be the kernel
from Proposition~\ref{propsmith} for the cone
 \begin{equation}
  \Lambda:=\{(y',y_n): L|y'|\le y_n\}.
 \end{equation}
 Fix $\eta\in C^\infty_c(T_r)$ such that $\eta=1$ on $T_{3r/4}$.
We observe that
\begin{equation}
 D^2(\eta f)=\eta (\Div A+B) + D\eta\otimes Df+Df\otimes D\eta + fD^2\eta.
\end{equation}
We set
\newcommand\Beta{B^{(\eta)}}
\begin{equation}
 B':=\eta B  + D\eta\otimes Df+Df\otimes D\eta + fD^2\eta- A\cdot\Enabla \eta ,
\end{equation}
so that $D^2(\eta f)=\Div(\eta A)+B'$
and
\begin{equation}
 \|B'\|_{L^m(\omegarf)}\le \|B\|_{L^m(\omegarf)}+C \|f\|_{W^{1,m}(\omegarf)}
 +C\|A\|_{L^m(\omegarf)}
 \le C \alpha.
\end{equation}
The constants may depends on $r$.
We check that
\begin{equation}\label{eqxyinclusion}
\text{if $x\in\omegarhaf$, $y\in \Lambda$, and $x-y\in T_r$ , then $x-y\in \omegarf$.}
\end{equation}
Indeed, $\Lip(\gamma)\le L$ and the definitions imply
 \begin{equation}
  (x-y)_n<\gamma(x')-y_n\le \gamma(x')-L|y'|\le \gamma((x-y)').
 \end{equation}
By Proposition~\ref{propsmith}, if $f\in C_c^\infty(\R^n)$ then for $x\in\omegarhaf$
 we have
\begin{equation}\label{eqfetaf}
 f(x)=(\eta f)(x)=\sum_{ij}\int_\Lambda K_{ij}(y) \partial_i\partial_j (\eta f)(x-y) dy,
\end{equation}
and in the right-hand side we can insert $\Div(\eta A)+B'$ in place of $D^2(\eta f)$, obtaining the desired representation.
Therefore we define
\begin{equation}\label{eqdeffhfl}
f_h(x):=\sum_{ijk}\partial_i \int_\Lambda K_{jk}(y) (\eta A_{jki})(x-y) dy,
\hskip5mm
f_l:=K\ast B',
\end{equation}
which, if regularity allows to
swap derivation and integration, is the same as $f_h=K\ast \Div (\eta A)=(DK)\ast (\eta A)$.

In the general case, $f_h$ and $f_l$ are still defined by \eqref{eqdeffhfl}, but proving $f=f_h+f_l$ requires approximation.
For $\eps>0$
we fix a mollifier $\varphi_\eps\in C^\infty_c(B_\eps)$
and define
$\hat f_\eps:=\varphi_\eps\ast (\eta f \chi_{\omegarf})$, $\hat B_\eps:=\varphi_\eps\ast (B'\chi_{\omegarf})$,
$\hat A_\eps:=\varphi_\eps\ast (\eta A\chi_{\omegarf})$.
Choose $\delta\in(0,\dist(\supp\eta, \partial T_r)\wedge r/4)$, and assume $\eps\in(0, \delta/(L+1))$.
 As in \eqref{eqfetaf}, Proposition~\ref{propsmith}
for $\hat f_\eps\in C^\infty_c(T_r)$
 implies
\begin{equation}\label{eqpropfabeps}
\hat f_\eps(x)=
\sum_{jk}\int_\Lambda K_{jk}(y)(D^2\hat f_\eps)_{jk}(x-y) dy.
\end{equation}
Assume now that $x\in \omega_{r/2,\gamma-\delta}$.
By the choice of $\delta$ and $\eps$,
$\hat f_\eps(x)=(\varphi_\eps\ast f)(x)$. By
\eqref{eqxyinclusion} applied to $\gamma-\delta$ (its proof did not use $\gamma(0)=0$),
we can restrict the integral to the set for which $x-y\in \omega_{r/2,\gamma-\delta}$.
Again by the choice of $\delta$ and $\eps$,
$D^2\hat f_\eps=\Div \hat A_\eps+\hat B_\eps
=\Div (\varphi_\eps\ast A)+\varphi_\eps\ast B$
in $\omega_{r/2,\gamma-\delta}$. Inserting in
\eqref{eqpropfabeps} leads to
\begin{equation}\label{eqpropfab}
\varphi\ast f=
\varphi_\eps \ast f_h +\varphi_\eps\ast f_l
\end{equation}
in $\omega_{r/2,\gamma-\delta}$. Letting $\eps\to0$ we obtain
$f=f_h+f_l$ pointwise almost everywhere in
$\omega_{r/2,\gamma-\delta}$, by arbitrariness of $\delta$ it holds almost everywhere
in $\omega_{r/2,\gamma}$.

It remains to obtain the $L^p$ and $L^m$ estimates.
As $K$ is homogeneous of degree $2-n$,
its second derivatives are $-n$-homogeneous kernels with average zero, and with derivatives which are $-n-1$-homogeneous. Therefore standard estimates for singular convolution kernels
(see, for example, \cite[Theorem~2 on page~35]{Stein1970}) give
\begin{equation*}
 \|\Enabla f_h\|_{L^{p}(\omegarhaf)} \le C \eps,  \quad  \| \Enabla f_h\|_{L^{m}(\omegarhaf)}   +
 \| \Enabla^2 f_l\|_{L^{m}(\omegarhaf)} \le C \alpha.
\end{equation*}
We can assume that $f_h$ has average zero over $T_{r/2}\cap\Omega$ (otherwise we subtract the average, and add it to $f_l$, then the average of $f_l$ is the same as the average of $f$, which is bounded by $\alpha$)
so that by Poincar\'e
\begin{equation*}
 \|f_h\|_{W^{1,p}(\omegarhaf)} \le C \eps,  \quad  \| f_h\|_{W^{1,m}(\omegarhaf)}   \le C \alpha.
\end{equation*}
Finally, from $f_l=f-f_h$ we obtain
$\|f_l\|_{W^{1,m}}\le C\alpha$.
\end{proof}

\begin{lemma}\label{lemmaxntodiv}
Let $\omegarf$ be as in \eqref{eqdefomegargamma},
$T_{r,L}$ as in \eqref{eqdefCrL},
$A\in L^1(\omegarf)$. Then there is
$A^*\in L^1_\loc(\omegarf;\R^n)$
such that
the distributional divergence obeys
$\Div A^*\in L^1_\loc(\omegarf)$ and
\begin{equation}\label{eqAApApp}
 \frac{A(x)}{\gamma(x')-x_n}
 = \Div A^*(x)   .
\end{equation}
Further, if $A\in L^p(\omegarf)$ for some $p\in(1,\infty)$ then $A^*\in L^p(\omegarf;\R^n)$ and
\begin{equation}
 \|A^*\|_{L^p(\omegarf)}\le C\|A\|_{L^p(\omegarf)}
\end{equation}
with $C=C(n,p)$.
\end{lemma}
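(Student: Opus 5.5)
The idea is to integrate $A/(\gamma(x')-x_n)$ in the vertical direction, from the boundary graph downward, and to use a Hardy inequality for the $L^p$ bound. Let $d(x):=\gamma(x')-x_n>0$ on $\omegarf$. We look for $A^*$ of the form $A^*=(0,\dots,0,a)$, so that $\Div A^*=\partial_n a$. For fixed $x'$, define
\begin{equation*}
 a(x',x_n):=-\int_{x_n}^{\gamma(x')\wedge 2Lr}\frac{A(x',t)}{\gamma(x')-t}\,dt ,
\end{equation*}
so that formally $\partial_n a = A(x',x_n)/(\gamma(x')-x_n)$. This choice integrates toward the boundary, where the weight is singular, and that is not integrable in general. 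Therefore we integrate from the bottom instead:
\begin{equation*}
 a(x',x_n):=\int_{-2Lr}^{x_n}\frac{A(x',t)}{\gamma(x')-t}\,dt .
\end{equation*}
For $x$ in the interior, the integrand is bounded by $|A|/\dist$. The quantity $\gamma(x')-t\ge \gamma(x')-x_n>0$ on the range of integration, so $a$ is well defined. It lies in $L^1_\loc$, since on compact subsets the weight is bounded. On each vertical line, $a$ is absolutely continuous and $\partial_n a = A/d$ almost everywhere. By Fubini, the distributional identity \eqref{eqAApApp} follows after testing against $\phi\in C^\infty_c(\omegarf)$ and integrating by parts in $x_n$ on each line. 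The divergence $A/d$ is in $L^1_\loc$ for the same reason.

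For the $L^p$ bound, fix $x'$ and substitute $s:=\gamma(x')-t$. With $\sigma:=\gamma(x')-x_n$, this gives
\begin{equation*}
 |a(x',x_n)|\le \int_{\sigma}^{\infty}\frac{|\tilde A(s)|}{s}\,ds ,
\end{equation*}
where $\tilde A(s):=|A(x',\gamma(x')-s)|$, extended by zero outside the vertical segment. This is the dual Hardy operator, which is bounded on $L^p(0,\infty)$ for $1\le p<\infty$ with constant $p$. Indeed,
\begin{equation*}
 \Bigl\|\int_\sigma^\infty \frac{h(s)}{s}ds\Bigr\|_{L^p(d\sigma)}\le p\,\|h\|_{L^p}.
\end{equation*}
We then integrate over $x'\in B_r'$ (Fubini) and obtain $\|A^*\|_{L^p(\omegarf)}\le p\|A\|_{L^p(\omegarf)}$. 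The constant depends only on $p$, so it is uniform in $\gamma$, $L$, $r$.

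The main point to get right is the direction of integration. Integrating from the lower face $x_n=-2Lr$ up to $x_n$ keeps the singular weight under control: $1/s$ is only evaluated at $s\ge\sigma$, and the operator is exactly the adjoint Hardy operator. The other direction would give the non-integrable $\int_0^\sigma h/s$. Measurability of $a$ and the validity of the distributional identity across the non-smooth graph need a brief Fubini argument. No boundary term appears, because test functions are compactly supported in $\omegarf$.
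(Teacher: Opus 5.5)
Your proposal is correct and is essentially the paper's proof: the paper also takes $A^*(x',x_n):=e_n\int_{-2rL}^{x_n}\frac{A(x',y_n)}{\gamma(x')-y_n}\,dy_n$, integrating from the lower face. It obtains $\|A^*\|_{L^p}\le p\|A\|_{L^p}$ from the one-dimensional dual Hardy inequality $\bigl\|\int_t^a f(s)/s\,ds\bigr\|_{L^p(0,a)}\le p\|f\|_{L^p(0,a)}$, applied on each vertical line with $f(s)=A(x',\gamma(x')-s)$; the paper proves that inequality by integration by parts and H\"older where you quote it, and your $L^1_{\mathrm{loc}}$ and Fubini remarks fill in what the paper leaves as ``one easily checks''.
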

\begin{proof}
We define $A^*: \omegarf\to\R^n$ setting
\begin{equation}
  A^*(x',x_n):=
  e_n\int^{x_n}_{-2rL}
  \frac{A(x',y_n)}{\gamma(x')-y_n} dy_n.
 \end{equation}
One easily checks that
\begin{equation}
\Div A^*(x)=\partial_n A^*_n(x)=\frac{A(x',x_n)}{\gamma(x')-x_n}
\end{equation}
distributionally in $\omegarf$, which proves \eqref{eqAApApp}.

In order to check the $L^p$ estimate,
we first recall that  for any
$f\in L^p((0,a))$ the function $g(t):=\int_t^a f(s)/s ds$ is in $L^p((0,a))$, with $\|g\|_p\le p \|f\|_p$.
By density it suffices to check this for smooth functions. Integrating by parts and using $tg(t)\to 0$ for $t\to0$, $g(a)=0$ and then H\"older,
\begin{equation}
\|g\|_p^p=
 \int_0^a |g|^p(t)dt =
 t|g|^p(t)\Bigr|_{t=0}^{t=a}
 + p \int_0^a t |g|^{p-1}(t) \frac{f(t)}{t}dt
 \le p \|g\|_p^{p-1}\|f\|_p.
\end{equation}
We use this inequality for fixed $x'$, with $a=2rL+\gamma(x')$,
$f(s) = A(x', \gamma(x')-s)$, making the change of variables $y_n = \gamma(x') - s$, and obtain
\begin{equation}
 \|A^*\|_{L^p(\omegarf)}
 \le p\|A\|_{L^p(\omegarf)}.
\end{equation}
\end{proof}

\begin{proof}[Proof of Proposition~\ref{propLipschitzboundarylocal}]
We define
\begin{equation}
 \hat B:= \chi_{\{|D^2f|\le 2B\}}D^2f,\hskip5mm
 \hat A(x):=(\gamma(x')-x_n)D^2f(x) \chi_{\{|D^2f|> 2B\}}(x).
\end{equation}
Then
$|\hat B|\le 2B$ and $|\hat A|\le 2A$ pointwise, so that
$\|\hat A\|_{L^m}+\|\hat B\|_{L^m}\le 2\alpha$, $\|\hat A\|_{L^p}\le2\eps$.
Using Lemma~\ref{lemmaxntodiv} we then obtain
$\hat A^*$ such that
\begin{equation}
 D^2f = \Div \hat A^*+B^*,
\end{equation}
with the same bounds (up to constants).
The result follows then by
Proposition~\ref{propLipschitzboundary}. \end{proof}

\section{Implications of approximability in the Euclidean case}\label{sec:implEucl}
In the Euclidean case, from Theorem~\ref{th:W2m_approximation} one easily obtains
the following approximation statement.
\begin{corollary}\label{coreuclidean}
 If $U\subset\R^n$ is open, bounded, connected, Lipschitz, $p\in (1,\infty)$, $m\in (1,\infty)$, then there is $C=C(U,n,p,m)$ such that for any $f\in W^{1,p}(U;\R^n)$ there is $\hat f\in W^{2,m}(U;\R^n)$ with
 \begin{equation}\label{eqrigeuclappr1}
  \|Df-D\hat f\|_{L^p(U)} \le C \|\dist(Df, SO(n))\|_{L^p(U)}
 \end{equation}
 and 
 \begin{equation}\label{eqrigeuclappr2}
 \|D\hat f\|_{L^\infty(U)}+ \|D\hat f\|_{W^{1,m}(U)} \le C .
 \end{equation}
\end{corollary}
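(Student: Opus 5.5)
The plan is to apply Theorem~\ref{th:W2m_approximation} with Euclidean data, after fixing the one mismatch: the target $\R^n$ is not compact. I take $M$ to be a large flat torus $\R^n/(R\mathbb{Z})^n$, with $R$ chosen so that $\overline U$ embeds isometrically into a fundamental domain. Then $U$ is an open, bounded, connected Lipschitz subset of a compact oriented manifold without boundary, and $SO(M,N)$ reduces pointwise to $SO(n)$ once tangent spaces are identified with $\R^n$.

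\textbf{Handling the target.} The key observation is that $\dist(Df,SO(n))$ and the estimates \eqref{eqrigeuclappr1}--\eqref{eqrigeuclappr2} only involve $Df$ and $D\hat f$. Hence translating $f$ by a constant changes nothing, and \eqref{eqrigeuclappr2} does not control $\|\hat f\|_{L^m}$. I will reduce the target to a compact manifold via Proposition~\ref{proptruncation}, applied to the Euclidean setting; its proof is local in the target and works equally for $N=\R^n$, or alternatively I can take the Euclidean truncation result in \cite{FrieseckeJamesMueller2002-CPAM}. This gives a $\Lambda$-Lipschitz $\tilde f$ with $\|Df-D\tilde f\|_{L^p}\le C\eps$, where $\eps:=\|\dist(Df,SO(n))\|_{L^p(U)}$. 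After subtracting a constant, $\tilde f(U)\subset B_{\Lambda\,\mathrm{diam}(U)}(0)$. Then $\tilde f$ takes values in a flat torus $N:=\R^n/(R'\mathbb{Z})^n$, with $R'>2\Lambda\,\mathrm{diam} U+1$, and $N$ is compact, oriented and without boundary. Since $\tilde f(U)$ lies in a ball of radius less than $R'/2$, the lift to $\R^n$ is unambiguous.

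\textbf{Applying the theorem and unwinding.} Applying Theorem~\ref{th:W2m_approximation} to $\tilde f:U\to N$, with $\imath$ any isometric embedding of $N$ into some $\R^d$, gives $\hat g\in C^1\cap W^{2,m}(U;N)$ satisfying \eqref{eq:mainth:W1p_estimate} and \eqref{eq:mainth:W2m_extrinsic}. To return to $\R^n$, I lift $\hat g$ through the covering $\R^n\to N$ to $\hat f:U\to\R^n$. The main obstacle is that a continuous map into the torus need not lift globally when $U$ is not simply connected. I would bypass this using the $L^\infty$ closeness built into the construction. In the proof of the theorem, \eqref{eqdistPhihatLinfty} gives $|\imath\circ\hat g-\imath\circ\tilde f|\le\delta$ pointwise, with $\delta$ at our disposal. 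Choosing $\delta$ below the injectivity radius scale of $\imath$, $\hat g(x)$ lies in a small geodesic ball around $\tilde f(x)$. I then define $\hat f(x)$ as the unique point near $\tilde f(x)$ in $\R^n$ projecting to $\hat g(x)$; this is a continuous, locally isometric lift.

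Because the projection $\R^n\to N$ is a local isometry and $\imath$ is a smooth embedding with $d\imath$ isometric, we get the pointwise bounds $|D\hat f|\le C|d(\imath\circ\hat g)|$ and $|D\hat f-D\tilde f|\le C|d(\imath\circ\hat g)-d(\imath\circ\tilde f)|+C|\imath\circ\hat g-\imath\circ\tilde f|$. Similarly $|D^2\hat f|\le C|\nabla d(\imath\circ \hat g)|+C|d(\imath\circ\hat g)|^2$, obtained through the second fundamental form of $\imath(N)$. Combining these with \eqref{eq:mainth:W1p_estimate}, \eqref{eq:mainth:W2m_extrinsic} and the truncation estimate gives $\|D\hat f-Df\|_{L^p}\le C\eps$ and $\|D\hat f\|_{L^\infty}+\|D^2\hat f\|_{L^m}\le C$. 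Since $U$ is bounded, $\|D\hat f\|_{L^m}\le C\|D\hat f\|_{L^\infty}$, so \eqref{eqrigeuclappr2} follows. One caveat remains: if the theorem as stated does not retain the $L^\infty$ bound, I would instead run the proof of Theorem~\ref{th:W2m_approximation} directly with $N=\R^n$, which needs no charts on the target, with $\pi=\id$ and with the bound \eqref{eqdistPhihatLinfty} available.
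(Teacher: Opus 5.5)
Your proposal is correct and follows the paper's route: a flat torus (or ball) containing $U$ as $M$, the Euclidean Lipschitz truncation of \cite{FrieseckeJamesMueller2002-CPAM}, a flat torus large enough to contain $\tilde f(U)$ as $N$, and a final triangle inequality. The paper leaves the passage from the $N$-valued approximation back to an $\R^n$-valued map implicit; your lift, based on the $L^\infty$ closeness \eqref{eqdistPhihatLinfty} from the proof of Theorem~\ref{th:W2m_approximation} with $\delta$ taken small (the bound becomes $2\delta$ after projecting onto $\imath(N)$, which is harmless), is a valid way to fill in that step.
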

\begin{proof}
 The key point is to find compact manifolds $M$ and $N$ such that this fits the assumptions of Theorem~\ref{th:W2m_approximation}.

 Let $K>0$ be such that $U\subset \subset B_K$. Then $M=\overline B_K$, or, alternatively, $M$ being the torus $[-K,K)^n$, will do.

 If $p>n$, then a similar construction can work for $N$. In the general case, one starts with Lipschitz truncation, using the result corresponding to \Cref{proptruncation} in the Euclidean setting (the Euclidean statement is indeed much older than \Cref{proptruncation}, see for example \cite[Prop.~A.1 and the paragraph of the proof of Prop.~3.4]{FrieseckeJamesMueller2002-CPAM}). Let $\Lambda>0$ and $\tilde C>0$ be such that for any $f\in W^{1,p}(U;\R^n)$ there is $\tilde f$ with $\Lip(\tilde f)\le\Lambda$ and
 $\|Df-D\tilde f\|_{L^p(U)}\le \tilde C\|\dist(Df,SO(n))\|_{L^p(U)}$. We can further assume 
 $\tilde f(0)=0$. Then $\tilde f(U)\subset B_{\Lambda K}$,
 so that we can choose $N$ to be the torus
 $[-\Lambda K, \Lambda K)^n$ and obtain the assertion for $\tilde f$, the result for $f$ (with the same $\hat f$) follows by a triangular inequality. We stress that the choices of $M$ and $N$ do not depend on $f$ but only on $n$, $p$ and $U$.
\end{proof}

We show here how approximability leads to direct proofs of classical results. We focus on rigidity and linearization of nonlinear elasticity.

\subsection{Approximability implies rigidity}
\label{sectEucliRigidity}
We show how approximability  (\Cref{coreuclidean}), Korn's inequality, and Liouville's theorem suffice
to obtain the Friesecke-James-Müller nonlinear rigidity estimate from \cite[Theorem~3.1]{FrieseckeJamesMueller2002-CPAM}
(for the $L^p$ version, see \cite{ContiDolzmannMueller2014}).
\begin{proposition}
Let $U\subset\R^n$ be open, bounded, connected, Lipschitz,   $p\in (1,\infty)$. Then there is $C=C(U,n,p)$ such that for any $f\in W^{1,p}(U;\R^n)$ one has 
  \begin{equation}\label{eqthesisrigideuclidean}
   \min_{A\in \R^{n\times n}} \|Df-A\|_{L^p(U)} \le C \|\dist(Df, SO(n))\|_{L^p(U)}.
  \end{equation}
\end{proposition}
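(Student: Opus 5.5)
The plan is to derive the rigidity estimate from Corollary~\ref{coreuclidean}, Korn's inequality, and a compactness argument based on Liouville's theorem. It suffices to show $\min_A \|Df-A\|_{L^p(U)} \le C\eps$, where $\eps:=\|\dist(Df,SO(n))\|_{L^p(U)}$. Since $\|Df\|_{L^p}\le C(1+\eps)$, we may assume $\eps\le\eps_0$ for a small $\eps_0$ to be chosen; otherwise take $A=0$ and the bound is trivial. By Corollary~\ref{coreuclidean} there is $\hat f$ with $\|Df-D\hat f\|_{L^p}\le C\eps$ and $\|D\hat f\|_{L^\infty}+\|D\hat f\|_{W^{1,m}}\le C$. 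Hence it suffices to prove the statement for $\hat f$, and we also have $\|\dist(D\hat f,SO(n))\|_{L^p}\le C\eps$.

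\textbf{Step 1: qualitative closeness to a single rotation.} Since $m>n$, the family of maps $\hat f$ with these bounds is precompact in $C^1(\overline U)$. I claim that for every $\delta>0$ there is $\eps_0>0$ such that $\eps\le\eps_0$ implies $\|D\hat f-R\|_{L^\infty(U)}\le\delta$ for some $R\in SO(n)$. Suppose not. Then there is a sequence with $\eps_k\to0$, and after passing to a subsequence $D\hat f_k\to Dg$ uniformly. The limit satisfies $Dg\in SO(n)$ everywhere. By Liouville's theorem (rigidity of exact isometries, which is elementary for $W^{1,\infty}$ maps with gradient in $SO(n)$ on a connected set), $Dg$ is a constant $R$. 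This contradicts the assumed uniform distance $\delta$.

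\textbf{Step 2: linearization at $R$.} Replace $\hat f$ by $R^T\hat f$ and write $D\hat f=\Id+G$ with $\|G\|_{L^\infty}\le\delta$. For $|G|$ small we have the pointwise expansion
\begin{equation*}
\dist(\Id+G,SO(n))=|\sym G|+O(|G|^2),
\end{equation*}
so $|\sym G|\le C\dist(D\hat f,SO(n))+C|G|^2$. Korn's inequality in $L^p$ on the Lipschitz set $U$ gives a skew matrix $W$ with
\begin{equation*}
\|G-W\|_{L^p}\le C\|\sym G\|_{L^p}\le C\eps+C\||G|^2\|_{L^p}\le C\eps+C\delta\|G\|_{L^p}.
\end{equation*}

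\textbf{Step 3: absorbing the quadratic term.} This is the main obstacle, because the error is measured in $\|G\|_{L^p}$ and not in $\|G-W\|_{L^p}$. To handle it, re-center at the best rotation: replace $R$ by $R e^{W}$, or more concretely choose $R$ minimizing $\|D\hat f-R\|_{L^p}$. Let $\tilde G:=D\hat f-R$. Its skew part relative to $\Id$ is then approximately orthogonal to constants, which yields $|W|\le C\|\sym \tilde G\|+C\|\tilde G\|^2$. Combining this with Step 2 gives
\begin{equation*}
\|\tilde G\|_{L^p}\le C\eps+C\delta\|\tilde G\|_{L^p}.
\end{equation*}
Choosing $\delta$ small, which fixes $\eps_0$, the last term is absorbed and $\|D\hat f-R\|_{L^p}\le C\eps$. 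If the minimizer argument proves awkward, the fallback is to iterate Step 2: replace $R$ by $R\exp(W)$ and note that each iteration contracts $\|G\|_{L^p}$ by a factor of $C\delta$ up to $C\eps$.

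Returning to $f$ through the triangle inequality gives \eqref{eqthesisrigideuclidean} with $A=R$.
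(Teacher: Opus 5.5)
Your proposal is correct and follows essentially the paper's route: Corollary~\ref{coreuclidean}, then compactness plus Liouville to make $D\hat f$ uniformly close to one rotation, then \eqref{eqsonlinear}, Korn, and absorption of the quadratic term via the small $L^\infty$ norm. The paper simply runs all of this as a single contradiction argument over a sequence $f_j$. Its Step~3 is cleaner than yours: it takes $Q_j$ nearest to the \emph{average} of $D\hat f_j$, so by \eqref{eqQAsym} the mean of $Dg_j$ is exactly symmetric and Korn needs no skew correction. With your $L^p$-best rotation, you should replace the heuristic ``approximately orthogonal to constants'' by the comparison $\|\tilde G\|_{L^p}\le\|R^TD\hat f-e^{W}\|_{L^p}\le C\|\sym\tilde G\|_{L^p}+C|W|^2$, where $\tilde G:=R^TD\hat f-\Id$ and $W$ is the mean of its skew part; since $|W|^2\le C\delta\|\tilde G\|_{L^p}$, this lets you absorb.
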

One easily checks that the matrix $A$ can be chosen to be in $SO(n)$.

It is immediate to see that~\eqref{eqthesisrigideuclidean} implies Corollary~\ref{coreuclidean}. We prove the other implication.

\begin{proof}
If \eqref{eqthesisrigideuclidean}  does not hold, there is a sequence $f_j\in W^{1,p}$ such that
\begin{equation}\label{eqeuclrigcontradass}
 \min_{A\in \R^{n\times n}} \|Df_j-A\|_{L^p} \ge j\|\dist(Df_j, SO(n))\|_{L^p}.
\end{equation}
Since $|Df_j|\le \sqrt n + \dist(Df_j, SO(n))$,
we obtain with $A=0$
\begin{equation}\label{eqdfjsonp}
\|\dist(Df_j, SO(n))\|_{L^p}\le \frac{1}{j-1} |\Omega|^{1/p} \sqrt n \to0.
\end{equation}

Fix $m>n$ and for each $j$ let $\hat f_j\in W^{2,m}(\Omega;\R^n)$ be the function from Corollary~\ref{coreuclidean}. After adding a constant and extracting a subsequence, we can assume that the sequence $\hat f_j$ has a weak limit  $\hat f$ in $W^{2,m}$. Then $D\hat f_j\to D\hat f$ uniformly and, using \eqref{eqrigeuclappr1} and \eqref{eqdfjsonp},
\begin{equation}\label{eqdhatfson}
 \|\dist(D\hat f, SO(n))\|_{L^p} =
  \lim_j\|\dist(D\hat f_j, SO(n))\|_{L^p} =0,
\end{equation}
therefore $D\hat f\in SO(n)$ pointwise, and by Liouville's rigidity theorem we obtain that $\hat f$ is affine,  $\hat f(x)=b+Qx$ for some $Q\in SO(n)$.

For each $j$, let $A_j$ be the average of $D\hat f_j$;
using that $D\hat f_j$ converges uniformly to $Q=D\hat f$ we obtain
$A_j\to Q$.
We
choose $Q_j\in SO(n)$ such that
\begin{equation}
 |A_j-Q_j|=\dist(A_j,SO(n)),
\end{equation}
obviously $Q_j\to Q$.
By minimality, 
$Q_j^TA_j$ is a symmetric matrix. Indeed,
for any skew-symmetric matrix $S$ we have that
\begin{equation}\label{eqQAsym}
\begin{split}
 0=\frac{d}{dt}|A_j-Q_j(\Id+t S)|^2 \biggr|_{t=0}=&
 \frac{d}{dt}|Q_j^TA_j-\Id-t S|^2 \biggr|_{t=0}\\
 =&
 -2S :(Q_j^TA_j-\Id).
\end{split}\end{equation}

A simple algebraic estimate shows that for any $F\in\R^{n\times n}$ one has
\begin{equation}\label{eqsonlinear}
\left| \frac{F+F^T}{2}-\Id\right|\le \dist(F,SO(n)) + 2 |F-\Id|^2\,.
\end{equation}
We define $g_j\in W^{1,p}(\Rndomain;\R^n)$ by $\hat f_j(x)=Q_j(x+g_j(x))$,
apply \eqref{eqsonlinear} to $Q_j^TD\hat f_j=\Id+Dg_j$ and obtain
\begin{equation}
\left| \frac{D g_j+ Dg_j^T}{2}\right|\le \dist(D\hat f_j,SO(n)) + 2 |Dg_j|^2\,.
\end{equation}
Further, the average of $Dg_j$ equals $Q_j^TA_j-\Id$, which is a symmetric matrix by \eqref{eqQAsym}.
By Korn's inequality,
\begin{equation}
 \|Dg_j\|_{L^p} \le C_K \|\dist(D\hat f_j,SO(n))\|_{L^p} + 2C_K \|Dg_j\|_{L^\infty}
 \|Dg_j\|_{L^p}\,.
\end{equation}
By uniform convergence of $Q_j^TD\hat f_j\to \Id$, for $j$ sufficiently large we have $2C_K\|Dg_j\|_{L^\infty}\le 1/2$, so that
\begin{equation}
\|D\hat f_j-Q_j\|_{L^p}=
 \|Dg_j\|_{L^p} \le 2C_K \|\dist(D\hat f_j,SO(n))\|_{L^p}.
\end{equation}
By \eqref{eqrigeuclappr1} the estimate (with a different constant) holds for $f_j$, and this contradicts \eqref{eqeuclrigcontradass}.
\end{proof}
\subsection{Linearization of nonlinear elasticity}
\label{sec:eucllinearization}
We show how approximability  (\Cref{coreuclidean}), Korn's inequality, and Liouville's theorem similarly suffice to prove directly the compactness and lower bound properties in the linearization of nonlinear elasticity, see \cite{DalmasoNegriPercivale2002}.
Let
\begin{equation}
 E_\eps(f):=\frac1{\eps^2} \int_\Rndomain W(Df) dx,
\end{equation}
with $W(\Id)=0$, $W(F)\ge C\dist^2(F,SO(n))$, $W(QF)=W(F)$ for $Q\in SO(n)$, $W$ twice differentiable in a neighbourhood of the identity.
\begin{proposition}
Let $U\subset\R^n$ be open, bounded, connected, Lipschitz.
Assume $f_\eps\in W^{1,2}(\Rndomain;\R^n)$ is given with $\limsup_\eps E_\eps(f_\eps)<\infty$. Then, up to a subsequence, there are $Q_\eps\in SO(n)$ and $b_\eps\in\R^n$ such that
 the displacements $u_\eps(x):= (Q_\eps^T(f_\eps(x)-b_\eps)- x)/\eps$ have a weak limit $u$ in $W^{1,2}$ and,
 letting $\C:=D^2W(\Id)$,
 \begin{equation}
  \int_\Rndomain \frac12 \C Du \cdot Du dx \le \liminf_\eps E_\eps(f_\eps).
 \end{equation}
\end{proposition}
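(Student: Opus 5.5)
The plan is to reduce the compactness part to the argument already used for the rigidity proposition, with Corollary~\ref{coreuclidean} (for $p=2$ and some fixed $m>n$) in place of the rigidity estimate. The lower bound then follows from a standard truncated Taylor expansion.

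\emph{Step 1: compactness.}
Since $W(F)\ge C\dist^2(F,SO(n))$ and $\limsup_\eps E_\eps(f_\eps)<\infty$, we have $\|\dist(Df_\eps,SO(n))\|_{L^2(U)}\le C\eps$.
Corollary~\ref{coreuclidean} gives $\hat f_\eps\in W^{2,m}$ with $\|Df_\eps-D\hat f_\eps\|_{L^2}\le C\eps$ and $\|D\hat f_\eps\|_{W^{1,m}}\le C$.
Exactly as in the proof of the rigidity proposition:
\begin{itemize}
\item After subtracting constants and extracting a subsequence, $D\hat f_\eps$ converges uniformly to $D\hat f$, with $D\hat f\in SO(n)$ a.e.
\item Liouville's theorem gives $D\hat f\equiv Q$.
\item Let $A_\eps$ be the average of $D\hat f_\eps$, and choose $Q_\eps\in SO(n)$ nearest to $A_\eps$, so that $Q_\eps^TA_\eps$ is symmetric.
\item Write $\hat f_\eps(x)=Q_\eps(x+g_\eps(x))$ and use \eqref{eqsonlinear}. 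Korn's inequality applies because the average of $Dg_\eps$ is symmetric, and the quadratic term is absorbed since $\|Dg_\eps\|_{L^\infty}\to0$. This gives $\|Q_\eps^TD\hat f_\eps-\Id\|_{L^2}=\|Dg_\eps\|_{L^2}\le C\eps$.
\end{itemize}
By the triangle inequality, $\|Q_\eps^TDf_\eps-\Id\|_{L^2}\le C\eps$.
Choose $b_\eps$ so that $u_\eps$ has zero average. Then $Du_\eps=(Q_\eps^TDf_\eps-\Id)/\eps$ is bounded in $L^2$, and by Poincar\'e $u_\eps$ is bounded in $W^{1,2}$. A subsequence converges weakly to some $u$.

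\emph{Step 2: lower bound.}
Set $G_\eps:=Du_\eps$. By frame indifference, $W(Df_\eps)=W(Q_\eps^TDf_\eps)=W(\Id+\eps G_\eps)$.
Let $\chi_\eps$ be the characteristic function of $\{|G_\eps|\le\eps^{-1/2}\}$. By Chebyshev, $|\{\chi_\eps=0\}|\le C\eps$, and hence $\chi_\eps G_\eps\weakto Du$ in $L^2$.
Since $W$ is twice differentiable at $\Id$ with $W(\Id)=0$ and $W\ge0$, we have $DW(\Id)=0$. Taylor's formula then gives
\begin{equation*}
W(\Id+\eps G)\ge \frac{\eps^2}{2}\C G\cdot G-\eps^2\omega(\eps|G|)|G|^2
\end{equation*}
for some modulus $\omega$ with $\omega(t)\to0$ as $t\to0$.
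Discard the set $\{\chi_\eps=0\}$, where $W\ge0$. On the remaining set $\eps|G_\eps|\le\eps^{1/2}$, so the error term is bounded by $\omega(\eps^{1/2})\|G_\eps\|_{L^2}^2\to0$. This yields
\begin{equation*}
E_\eps(f_\eps)\ge\int_U\frac12\C(\chi_\eps G_\eps)\cdot(\chi_\eps G_\eps)\,dx-o(1).
\end{equation*}
The form $\C$ is positive semidefinite, because $\Id$ is a minimum of $W$. Therefore $G\mapsto\int_U\frac12\C G\cdot G\,dx$ is convex and weakly lower semicontinuous on $L^2$, and the $\liminf$ inequality follows.

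The main obstacle is Step~1, specifically the rate: one must ensure that $Q_\eps^TDf_\eps-\Id$ is of order $\eps$ in $L^2$, not merely $o(1)$. This is where Korn's inequality is combined with the uniform convergence of $D\hat f_\eps$ supplied by the $W^{2,m}$ bound, verbatim as in the rigidity proof. Step~2 is routine.
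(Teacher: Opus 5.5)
Your proof is correct. Its overall structure matches the paper's: Corollary~\ref{coreuclidean}, uniform convergence of $D\hat f_\eps$, Liouville, a choice of $Q_\eps$ that makes the averaged gradient symmetric, Korn with absorption, and finally truncation, Taylor expansion and weak lower semicontinuity.

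The one genuine difference is how you get the $O(\eps)$ rate in the compactness step.
\begin{itemize}
\item The paper Taylor-expands $W$ at $Q_\eps^T D\hat f_\eps$ and combines coercivity of $\C$ on symmetric matrices with Korn. It then uses $\int W(D\hat f_\eps)\le C\int W(Df_\eps)$.
\item That last inequality tacitly needs an upper bound $W\le C\dist^2(\cdot,SO(n))$ near $SO(n)$. This bound follows from twice differentiability at $\Id$ and frame indifference, but the paper does not spell it out.
\item You instead rerun the argument of Section~\ref{sectEucliRigidity} with the purely algebraic inequality \eqref{eqsonlinear}. As a result, only the lower bound $W\ge C\dist^2$ enters the compactness.
\item You could even cite that rigidity proposition directly to get $\min_{R\in SO(n)}\|Df_\eps-R\|_{L^2}\le C\eps$.
\end{itemize}

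In the lower bound, you truncate at $|Du_\eps|\le\eps^{-1/2}$, which kills the Taylor remainder in one step. The paper truncates at $|Df_\eps-Q_\eps|\le\delta$ and lets $\delta\to0$ at the end. The two are equivalent.
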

\begin{proof}
We can assume $f_\eps$ to have average zero.
Let $m>n$. By Corollary~\ref{coreuclidean} there are $\hat f_\eps$ such that 
\begin{equation}
\|D^2\hat f_\eps\|_{L^m}\le C \hskip3mm\text{ and }\hskip3mm
\|D\hat f_\eps-Df_\eps\|_{L^2}\le C
 \| \dist(Df_\eps, SO(n))\|_{L^2} \le C\eps.
\end{equation}
By Ascoli-Arzel\'a  there is a subsequence with $\hat f_\eps\to \hat f$ in $C^1$, with $D\hat f\in SO(n)$, so that by Liouville's rigidity theorem $\hat f(x)=Qx+b$ for some $Q\in SO(n)$. We select $Q_\eps\in SO(n)$, $Q_\eps\to Q$, so that
\begin{equation}\label{eqavgsym}
 Q_\eps^T \int_\Rndomain D\hat f_\eps dx \in \R^{n\times n}_\sym
\end{equation}
and $b_\eps\in\R^n$ to be the average of $\hat f_\eps(x)-x$.

Fix $\delta>0$, chosen below. For $\eps$ sufficiently small, we have $\|Q_\eps^T D\hat f_\eps-\Id\|_{L^\infty}\le\delta$. We use the Taylor formula
\begin{equation}\label{eqWdhatfepslb}
W(D\hat f_\eps)=W(Q_\eps^TD\hat f_\eps)\ge \frac12 \C (Q_\eps^TD\hat f_\eps-\Id) \cdot (Q_\eps^TD\hat f_\eps-\Id)
-\omega(\delta) |Q_\eps^TD\hat f_\eps-\Id|^2.
\end{equation}
By Korn's inequality and~\eqref{eqavgsym},
\begin{equation}
 \int \frac12 \C (Q_\eps^TD\hat f_\eps-\Id) \cdot (Q_\eps^TD\hat f_\eps-\Id)dx
 \ge C_K \int|Q_\eps^TD\hat f_\eps-\Id|^2dx.
\end{equation}
Choosing $\delta$ so that $\omega(\delta)\le \frac12 C_K$
and integrating \eqref{eqWdhatfepslb},
\begin{equation}
  \int|Q_\eps^TD\hat f_\eps-\Id|^2dx\le C \int W(D\hat f_\eps)dx\le C \int W(Df_\eps)dx\le C\eps^2, 
\end{equation}
and, letting $u_\eps(x):=Q_\eps^T(f_\eps(x)-x-b_\eps)/\eps$ and using $\|D\hat f_\eps-Df_\eps\|_{L^2}\le C\eps$,
\begin{equation}
\int |Du_\eps|^2dx \le \frac1{\eps^2}  \int|Q_\eps^TDf_\eps-\Id|^2dx\le C, 
\end{equation}
which concludes the compactness proof.

The lower bound then follows with the usual strategy: one writes
\begin{equation}
\frac1{\eps^2}W(Df_\eps)\ge \frac12 \C Du_\eps \cdot Du_\eps
-\omega(|Df_\eps-Q_\eps|) |Du_\eps|^2
\end{equation}
lets $F_\eps:=\{|Df_\eps-Q_\eps|>\delta\}$, observes that 
$|F_\eps|\to0$, so that $\chi_{F_\eps} Du_\eps\weakto0$ in $L^2$,
and
\begin{equation}\begin{split}
 \liminf_\eps E_\eps(f_\eps) \ge &\liminf_\eps 
\frac12 \int_\Rndomain \chi_{\Rndomain\setminus F_\eps} \C Du_\eps \cdot Du_\eps - \omega(\delta) |Du_\eps|^2 dx\\
 \ge& \frac12\int_\Rndomain \C Du \cdot Du dx - \omega(\delta) \limsup_\eps \|Du_\eps\|_2^2.
\end{split}\end{equation}
Since $Du_\eps$ is bounded in $L^2$, and $\delta$ is arbitrary, the proof is concluded.
\end{proof}

\section*{Acknowledgments}
This work was partially funded by the Deutsche Forschungsgemeinschaft (DFG, German Research Foundation) {\sl via}
 project 539309657, CRC 1720, project 
441468770, SPP 2256
 and project 390685813, GZ 2047/1.

\addcontentsline{toc}{section}{References}
\def\cprime{$'$}

\end{document}